\def\b{{\beta}}
\def\d{{\delta}}
\def\e{{\varepsilon}}
\def\k{{\kappa}}
\def\l{{\lambda}}
\def\pa{{\partial}}
\def\r{{\mathbb{R}}}
\newtheorem{theorem}{Theorem}
\newtheorem{lemma}{Lemma}
\newtheorem{proposition}{Proposition}
\newtheorem{remark}{Remark}
\title{Existence of traveling wave solutions in continuous OV models}
\author{Kota Ikeda\thanks{School of Interdisciplinary Mathematical Sciences, 
Meiji University, ikeda@meiji.ac.jp},
Toru Kan\thanks{Department of Mathematics, Osaka Metropolitan University},
Toshiyuki Ogawa\thanks{School of Interdisciplinary Mathematical Sciences, 
Meiji University}}
\date{}
\begin{document}
\maketitle

\begin{abstract}
 In traffic flow, self-organized wave propagation, which characterizes congestion, 
 has been reproduced in macroscopic and microscopic models. 
 Hydrodynamic models, a subset of macroscopic models, 
 can be derived from microscopic-level car-following models, 
 and the relationship between these models has been investigated. 
 However, most validations have relied on numerical methods and formal analyses; 
 therefore, analytical approaches are necessary to rigorously ensure their validity. 
 This study aims to investigate the relationship between macroscopic 
 and microscopic models based on the properties of the solutions corresponding to 
 congestion with sparse and dense waves. 
 Specifically, we demonstrate the existence of traveling wave solutions 
 in macroscopic models and investigate their properties.
\end{abstract}

\section{Introduction}\label{sec_Introduction}

Various aspects of traffic dynamics and congestion formation 
present challenges for mathematicians and physicists, 
drawing on more than 80 years of engineering experience.
In the early 1990s, traffic flow was recognized as a non-equilibrium system.
Empirical evidence indicates multiple dynamic phases 
in traffic flow and dynamic phase transitions.
Several mathematical models have been proposed to explain these empirical results, 
with some models qualitatively reproduce all known features of traffic flows, 
including localized and extended forms of congestion, 
self-organized propagation of stop-and-go waves,
and observed hysteresis effects.
These characteristics are criteria for good traffic models, as noted 
by Helbing \cite{helbing2001traffic}.
However, many of these models have only been validated using numerical 
techniques or formal analyses and have not yet been rigorously proven.

Mathematical models can be categorized into macroscopic and microscopic models.
These models are often interrelated through Taylor and mean-field approximations.
Payne \cite{payne1979critical} developed a macroscopic model based
on the compressible fluid equation and the dynamic velocity equation.
It was demonstrated in \cite{kuhne1984macroscopic} that 
the linear instability condition of the Payne model aligns precisely 
with those of well-known microscopic models, such as the car-following model 
or the optimal velocity model proposed by \cite{bando1994structure}.
However, the Payne model produces shock-like waves that compromise numerical robustness.
Hence, K\"uhne \cite{kuhne1987freeway} and Kerner and 
Konh{\"a}user \cite{kerner1993cluster} introduced models incorporating 
artificial viscosity terms to the Payne model.
In these studies, uniform flows were destabilized based on density,
and numerical calculations confirmed the stable formation of vehicle clusters.
Lee et al. \cite{lee2001macroscopic} attempted to derive a fluid-dynamic model
from a car-following model using a coarse-graining procedure to 
elucidate the relationship between these models.
The authors verified this through numerical simulations, demonstrating 
that the macroscopic model based on the mean-field method quantitatively approximates 
the microscopic model.

As noted by \cite{helbing2001traffic}, many macroscopic models,
including those mentioned above, can be expressed in the following general form:
\begin{equation}\label{eq_cOV}
 \left\{
 \begin{aligned}
  \dfrac{\pa \rho}{\pa t} + \dfrac{\pa (\rho v)}{\pa x} & = 0, \\
  \dfrac{\pa v}{\pa t} + v \dfrac{\pa v}{\pa x} 
  & = - \dfrac{1}{\rho} \dfrac{d (P (\rho))}{d x}
  + \k (\rho) \dfrac{\pa^2 v}{\pa x^2} + \dfrac{1}{\tau} (V (\rho^{-1}) - v),
 \end{aligned}
 \right.
\end{equation}
where $\rho = \rho (x, t)$ and $v = v (x, t)$ 
denote the density and velocity of the vehicles at $x$ and $t$, respectively.
The terms $\k, \tau, P$ and $V$ represent viscosity-like quantities, 
relaxation time, traffic pressure, and equilibrium velocity, respectively.
We neglected the diffusion effect of the density in the first equation and 
external forces, such as fluctuations, in (\ref{eq_cOV}).
We assume that $V$ and $\k$ depend on $\rho$, 
and $P (\rho) \equiv - V (\rho^{-1}) / 2 \tau$, where $\tau > 0$ is constant.
In specific cases, $V$ is often given by a sigmoid function
\begin{equation}\label{eq1_OV}
 V (u) \equiv V_0 [\tanh (\b (u - u_c)) + M],
\end{equation}
where $V_0, \b, u_c, M$ are positive constants. 
The function $V$ is known 
as the {\it optimal velocity function} \cite{bando1994structure}.
The density-dependent function $\k = \k (\rho)$ has been assumed to be 
$0$ \cite{payne1979critical},
$\k_0$ \cite{kuhne1984macroscopic, kuhne1987freeway},
$\k_0 / \rho$ \cite{kerner1993cluster},
and $1 / (6 \tau \rho^2)$ \cite{lee2001macroscopic},
where $\k_0$ denotes a positive constant.
We do not consider the case $\k (\rho) = 0$ in this paper.
The derivations and analyses of continuous models that are not included 
in (\ref{eq_cOV}) have also been conducted
(see for instance \cite{berg2000continuum, berg2001traveling}).

The microscopic optimal velocity model exhibits two typical types of 
collective motion depending on the density.
In the low-density region, the distance between any two neighboring vehicles
converges to a constant $t \to \infty$, known as {\it free flow}.
When the density is relatively high, the distance oscillates over time, 
known as {\it congestion} or {\it jamming}.
The transition between these two states occurs via a Hopf bifurcation, 
as shown in Fig.~12 of \cite{gasser2004bifurcation}.
The characteristics of the global bifurcation diagram 
indicate that a congested state has only one congested region, 
and all periodic solutions on any other bifurcation branch 
associated with multiple congested regions are unstable.
This implies that multiple congested regions merge as $t$ increases
and combine into a single lump.
These observations in the microscopic model are valid for the macroscopic model.
More precisely, the congestion phenomenon of vehicles in (\ref{eq_cOV}) 
can be considered as the dynamics of a {\it traveling wave solution},
which moves at a constant speed and forms a pulse shape
(see Figure~\ref{fig_congestion}).
If the initial state has a relatively high $\rho$, then 
all solutions transition to states with multiple pulses
after a short time, as long as numerical calculations are feasible.
Eventually, the multiple pulses merge into one.
Therefore analyzing 
the single-pulse traveling wave solution in (\ref{eq_cOV}) is crucial
to understand congestion phenomena in vehicles.

Generally, $\varphi_a$ represents the (partial) derivative of $\varphi$ 
with respect to $a$ for a function $\varphi$ that depends on variable $a$, 
i.e., $\varphi_a = \pa \varphi / \pa a$.
If $\varphi$ depends solely on one variable $a$, then 
we may use the symbol $\varphi'$, instead of $\varphi_a$.
We make the following assumptions for the optimal velocity function $V$ 
and viscosity coefficient $\k$ throughout the study.
\begin{itemize}
 \item[(A1)]
	    $V \in C^1 ([0, \infty)) \cap C^2 ((0, \infty))$.
	    $V' (u)$ is positive and bounded in $u > 0$,
	    and converges to $0$ as $u \to \infty$.
	    Moreover, there is a global maximum point $U_M \in (0, \infty)$
	    of $V' (u)$ such that $V'' (u) = 0$ attains a unique root at $u = U_M$.
 \item[(A2)]
            $\k \in C^1 ((0, \infty))$ and $\k > 0$ in $(0, \infty)$.
\end{itemize}
It follows from (A1) that $V' (u)$ has no local minimum points.
In some of our results, we additionally need the following assumption
for the diffusion coefficient $\k$
(see condition (\ref{eq1_cs})):
\begin{equation}\label{cd:nu}
 \int_1^\infty \frac{d\rho}{\rho^3 \k (\rho)} = \infty. \tag{H} 
\end{equation}
%

%%%%%%%%%%%%%%%%%%%%
%%%%%%%%%%%%%%%%%%%%

\begin{figure*}[h]
 \begin{center}
  \begin{tabular}{c}
   \includegraphics[width=12cm]{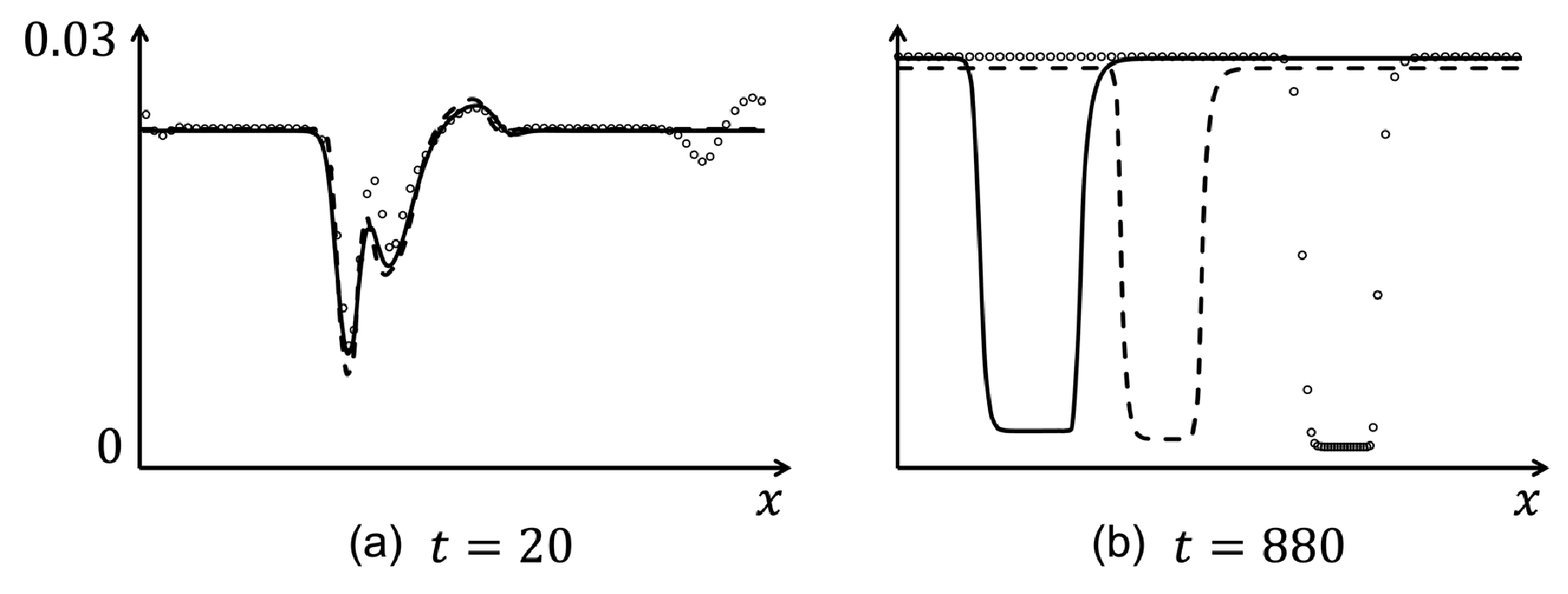}
  \end{tabular}
  \caption{Snapshots of solutions in (\ref{eq_cOV}) with the periodic boundary 
  condition and in the microscopic optimal velocity model at (a) $t = 20$ 
  and (b) $t = 880$.
  The optimal velocity function $V$ is given by (\ref{eq1_OV}).
  Solid and dashed lines represent the graphs of $v$ in (\ref{eq_cOV})
  under $1 / (6 \tau \rho^2)$ (Lee et al. model)
  and $\k (\rho) = \k_0$ (K\"uhne model), respectively.
  The unfilled circles denote the pairs of $(x_i, dx_i/dt)$ 
  in the microscopic optimal velocity model, 
  where $x_i$ represents the position of the $i$-th vehicle for $i = 1, \ldots, 77$.
  The numerical simulations were performed with parameters
  $L = 2.33$, $V_0 = 0.0168$, $M = 0.913$, 
  $u_c = 0.025$, $\beta = 89.7$, $\tau = 0.5$, and $\k_0 = 1/7500$,
  where $L$ denotes the length of the domain.
  The Fourier-spectral method was used in (\ref{eq_cOV})
  for space discretization with a truncation wave number of $100$, 
  while the time discretization employed 
  the classical Euler scheme with a time increment of $0.001$.  
  The classical fourth-order Runge-Kutta scheme 
  was used for the microscopic optimal velocity model 
  for time discretization with a time increment of $0.01$.
  }
  \label{fig_congestion}
 \end{center}
\end{figure*}

%%%%%%%%%%%%%%%%%%%%
%%%%%%%%%%%%%%%%%%%%

The traveling wave solution $(\rho (x, t), v (x, t)) = (\rho (z), v (z))$
in (\ref{eq_cOV}) is governed by 
\begin{equation}\label{eq1_TV0}
 \left\{
 \begin{aligned}
  c \rho_z + (\rho v)_z & = 0, \\
  c v_z + v v_z
  & = - \dfrac{1}{\rho} \dfrac{d (P (\rho))}{d z}
  + \k (\rho) v_{zz} + \dfrac{1}{\tau} (V (\rho^{-1}) - v),
 \end{aligned}
 \right.
\end{equation}
where $c$ is the wave speed and $z=x+ct$ is the moving coordinate.
As shown in Figure~\ref{fig_congestion}, the traveling wave solution to (\ref{eq_cOV})
approaches a constant outside the region where $v$ is relatively small, implying
the existence of a homoclinic orbit for a certain value of $c$.

We are interested in traveling wave solutions that connect constant steady states, 
imposing the condition:
\begin{equation}\label{bctw}
 \lim_{z \to \pm \infty} (\rho (z), v (z)) 
  = (\overline{\rho}_\pm, \overline{v}_\pm)
\end{equation}
for certain $(\overline{\rho}_\pm, \overline{v}_\pm)$, 
where $\overline{v}_\pm = V ((\overline{\rho}_\pm)^{-1})$.
If $\overline{\rho}_+ > \overline{\rho}_-$ 
(resp. $\overline{\rho}_+ < \overline{\rho}_-$),
the corresponding solution is a {\it traveling back solution}
(resp. {\it traveling front solution}).
If $\overline{\rho}_+ = \overline{\rho}_-$, it is
called a {\it traveling pulse solution}.
We are also interested in a {\it periodic traveling wave solution},
which satisfies $(\rho (z + Z), v (z + Z)) = (\rho (z), v (z))$
for some $Z > 0$ instead of \eqref{bctw}.

We obtain the following by integrating the first equation of (\ref{eq1_TV0}):
\begin{equation}\label{eq1_preserve_K}
 \rho (z) (v (z) + c) = K
\end{equation}
for any $z$, where $K$ is a constant.
We assume $K \neq 0$ because of our interest in obtaining 
nonconstant solutions of (\ref{eq1_TV0}).
We obtain the following by substituting 
$u \equiv \rho^{-1} = (v + c) / K$ and $P (\rho) = - V (u) / 2 \tau$ 
into the second equation of (\ref{eq1_TV0}):
\[
K^2 u u_z = \dfrac{1}{2 \tau} V' (u) u u_z
+ \k (u^{-1}) K u_{zz} + \dfrac{1}{\tau} (V (u) - K u + c).
\]
This is equivalent to the following dynamical system:
\begin{equation}\label{eq1_TV}
 \left\{
 \begin{aligned}
  u_z & = w, \\
  w_z & = g_1(u) f(u) + g_2 (u) h (u, \mu) w,
 \end{aligned}
 \right.
\end{equation}
where $\mu = 2 \tau K - 1$ and 
\[
\begin{aligned}
 & f (u) = \dfrac{K u - V (u) - c}{K},
 & & h (u, \mu) = f' (u) + \mu,
 & & g_1 (u) = \dfrac{1}{\tau \k (u^{-1})},
 & & g_2 (u) = \dfrac{u}{2 \tau \k (u^{-1})}.
\end{aligned}
\]
The traveling back/front, traveling pulse, and periodic solutions 
of (\ref{eq1_TV0}) correspond to a {\it heteroclinic orbit},
a {\it homoclinic orbit}, and a periodic orbit in (\ref{eq1_TV}), respectively.
Hence, they are also referred to as a traveling back/front solution,
a traveling pulse solution, and a periodic solution in (\ref{eq1_TV}).
Our goal is to prove the existence of these solutions.
We require that the solution $(u, w)$ satisfies $u (z) > 0$ 
throughout the study because $u = \rho^{-1}$ and $\rho$ must be positive.
We note that $c$ and $K$ are unknown constants, 
which must be determined such that (\ref{eq1_TV}) has appropriate orbits.
This task may be simplified by considering $\mu$ as a control parameter 
rather than addressing $(K, c)$ directly. 
Specifically, we seek a particular value of $\mu$ such that 
the desired traveling wave solutions exist for a given pair $(K, c)$.
The original problem can be then solved by determining $(K, c)$ 
such that $\mu = 2 \tau K-1$.
This study presents the first step toward addressing these issues.
Our results identify all $(K, c, \mu)$ such that a homoclinic 
or periodic orbit exists, demonstrating 
the existence of particular triplets and the 
nonexistence of others.

We provide several definitions and notations to state the main results.
Define $K_0 \equiv V'(0)$ and $K_M \equiv V'(U_M) > K_0$.
The assumption (A1) implies that the function $K u - V (u)$ has 
a unique local maximum point $u_M = u_M (K) \in (0,U_M)$ for $K \in (K_0, K_M)$.
Similarly, for $K \in (0, K_M)$, $K u - V (u)$ has a unique local minimum point
$u_m = u_m (K) \in (U_M, \infty)$.
Set $c_0 \equiv - V (0)$ and define $c_M = c_M (K)$,
$c_m = c_m (K)$, and $c_1 = c_1 (K)$ by
\[
c_M \equiv K u_M - V(u_M),
\quad c_m \equiv K u_m - V (u_m),
\quad c_1 \equiv \max \{ c_0, c_m \}. 
\]
It is elementary to show that some $K_1 \in (K_0, K_M)$ exists such that
\[
\begin{aligned}
 & c_m (K) < c_0 & & \mbox{if} \ 0 < K < K_1, \\
 & c_m (K) = c_0 & & \mbox{if} \ K = K_1, \\
 & c_m (K) > c_0 & & \mbox{if} \ K > K_1.
\end{aligned}
\]
After that, we define
\[
\begin{aligned}
 \mathcal{D}_1 
 & \equiv \{ (K, c) \ | \ K_0 < K < K_M, \ c_1 (K) < c < c_M(K) \}, \\
 \mathcal{D}_2 
 & \equiv \{ (K, c) \ | \ K_0 < K < K_1, \ c = c_0 \}
 \cup \{ (K, c) \ | \ 0 < K < K_1, \ c_m (K) < c < c_0 \}.
\end{aligned}
\]
If $(K, c) \in \mathcal{D}_1$, the function $f$ has three zeros, 
denoted by $u_0, u_1, u_2$ with $0 < u_1 < u_0 < u_2$.
Similarly, if $(K, c) \in \mathcal{D}_2$, $f$ has two zeroes, 
$u_0, u_2$, with $0 < u_0 < u_2$.
More precise properties of $f$ will be discussed in Lemma~\ref{lem:prf}.
Since we aim to find positive solutions in (\ref{eq1_TV}), 
we divide a region of $(K, c)$ into $\mathcal{D}_1$ and $\mathcal{D}_2$ 
based on the number of zeroes of $f$ for $u > 0$.

First, we study the existence of heteroclinic orbits in (\ref{eq1_TV})
that satisfy one of the following conditions:
\begin{equation}\label{eq:bcmi}
u (-\infty) = u_1
\quad u (\infty) = u_2,
\quad w (\pm \infty) = 0,
\quad w > 0,
\tag{HE1}
\end{equation}
\begin{equation}\label{eq:bcmd}
u (-\infty) = u_2,
\quad u (\infty) = u_1,
\quad w (\pm \infty) = 0,
\quad w < 0.
\tag{HE2}
\end{equation}
%

%%%%%%%%%%%%%%%%%%%%
%%%%%%%%%%%%%%%%%%%%

\begin{theorem}\label{thm:HECO}
 For $(K, c) \in \mathcal{D}_1$, there exists $\mu_b = \mu_b (K,c) \in \mathbb{R}$ 
 (resp. $\mu_f = \mu_f (K,c) \in \mathbb{R}$) such that (\ref{eq1_TV}) 
 with (\ref{eq:bcmi}) (resp. (\ref{eq:bcmd}))
 has a solution if and only if $\mu = \mu_b$ (resp. $\mu = \mu_f$).
\end{theorem}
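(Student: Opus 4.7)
The plan is to prove existence and uniqueness of $\mu_b$ by a shooting argument in the $(u,w)$ phase plane combined with monotone dependence on the parameter $\mu$; the argument for $\mu_f$ in \eqref{eq:bcmd} is entirely parallel, starting from the unstable manifold of $(u_2,0)$ into $\{u<u_2,\ w<0\}$. Concretely, for $(K,c)\in\mathcal{D}_1$, Lemma~\ref{lem:prf} supplies three simple zeros $u_1<u_0<u_2$ of $f$ with $f'(u_1),f'(u_2)>0>f'(u_0)$. The linearization of \eqref{eq1_TV} at $(u_i,0)$ has determinant $-g_1(u_i)f'(u_i)$, so $(u_1,0)$ and $(u_2,0)$ are hyperbolic saddles for every $\mu$, with one-dimensional stable and unstable manifolds, while $(u_0,0)$ is a node or focus whose stability depends on $\mathrm{sgn}\,h(u_0,\mu)$.

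For each $\mu$ let $\gamma_\mu$ denote the branch of the unstable manifold of $(u_1,0)$ lying in $\{u>u_1,\ w>0\}$; while $w>0$ along $\gamma_\mu$ I parametrize it by $u$ and write $w=w_\mu(u)$, which satisfies
\[
\frac{dw}{du}=\frac{g_1(u)f(u)}{w}+g_2(u)\bigl(f'(u)+\mu\bigr),
\]
with initial slope equal to the positive eigenvalue $\lambda_+^{(u_1)}(\mu)$ of the linearization at $(u_1,0)$. A direct computation shows that $\lambda_+^{(u_1)}$ is strictly increasing in $\mu$, and the right-hand side of the ODE is manifestly increasing in $\mu$ (its $\partial_\mu$ equals $g_2(u)>0$); the standard comparison principle then yields the key monotonicity $w_{\mu_1}(u)<w_{\mu_2}(u)$ whenever $\mu_1<\mu_2$ and both solutions are defined and positive.

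Next I analyze the asymptotic behavior of $\gamma_\mu$ as $\mu\to\pm\infty$ to locate the shooting threshold. For $\mu$ large positive, $g_2(u)h(u,\mu)>0$ on $[u_1,u_2]$, so $w_z\geq 0$ along $\gamma_\mu$ and $w$ grows exponentially, forcing $\gamma_\mu$ to cross $u=u_2$ transversally with $w>0$. For $\mu$ large negative, $g_2 h<0$ throughout and a fast–slow decomposition traps $\gamma_\mu$ in an $O(|\mu|^{-1})$ neighborhood of the slow manifold $\{w=-g_1(u)f(u)/(g_2(u)h(u,\mu))\}$ on $(u_1,u_0)$; this slow manifold vanishes at $u_0$, so $\gamma_\mu$ asymptotes to $(u_0,0)$ and fails to reach $u_2$. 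Let $\Sigma=\{\mu:\gamma_\mu$ crosses $u=u_2$ at some finite $z$ with $w>0\}$; by monotonicity $\Sigma$ is upward closed, by continuous dependence open, and by the asymptotic analysis nonempty and bounded below, so $\Sigma=(\mu_b,\infty)$ for a unique $\mu_b\in\mathbb{R}$. At $\mu=\mu_b$ the orbit $\gamma_{\mu_b}$ stays in $\{u<u_2\}$ with $u$ monotone and bounded, so $u(z)\to u^*\leq u_2$; ruling out $u^*=u_0$ by matching with nearby $\mu>\mu_b$ orbits that cross $u_2$ forces $u^*=u_2$, i.e.\ $\gamma_{\mu_b}$ coincides with the branch of the stable manifold of $(u_2,0)$ in $\{w>0\}$ and satisfies \eqref{eq:bcmi}. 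Uniqueness follows from monotonicity: for $\mu>\mu_b$ the orbit overshoots with $w(u_2)>0$, and for $\mu<\mu_b$ it cannot reach $u_2$.

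The main obstacle I expect is the $\mu\to-\infty$ part of the asymptotic analysis: since $\lambda_+^{(u_1)}(\mu)\to 0^+$ as $\mu\to-\infty$, the orbit leaves $(u_1,0)$ infinitely slowly, and a careful singular-perturbation justification is needed to conclude that $\gamma_\mu$ really does track the slow manifold and terminate near $(u_0,0)$ rather than escaping past $u_0$. A secondary delicate point is verifying that the threshold orbit $\gamma_{\mu_b}$ converges to $(u_2,0)$ rather than being caught in the basin of $(u_0,0)$ or approaching a limit cycle; here one must use that the stable manifold of $(u_2,0)$ in $\{w>0\}$ is precisely the basin boundary separating orbits that cross $u=u_2$ from those that remain trapped near $(u_0,0)$.
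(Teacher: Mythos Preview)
Your overall strategy---shooting plus monotonicity in $\mu$---is correct and is exactly what the paper does. The monotonicity of $w_\mu$ that you derive is the paper's Lemma~\ref{lem:mwmu}, and your $\mu\to+\infty$ argument is essentially Lemma~\ref{lem:bwmi} (though your claim ``$w_z\ge 0$ along $\gamma_\mu$'' is imprecise on $(u_0,u_2)$ where $f<0$; the paper first shows $w_1^+(u_0)\to\infty$ via \eqref{eq:wueq} on $(u_1,u_0)$, then propagates this using \eqref{eq:wueq2}).

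The substantive difference is that the paper uses a \emph{two-sided} shooting that dissolves both of your anticipated obstacles. Rather than shooting only from $(u_1,0)$ and studying where $\gamma_\mu$ ends up, the paper simultaneously tracks the stable-manifold branch $w_2^+$ of $(u_2,0)$ in $\{w>0\}$ and considers
\[
\phi(\mu)\;=\;w_1^+(a;\mu)-w_2^+(a;\mu)
\]
at a fixed $a\in(u_0,u_2)$. Since $(w_1^+)_\mu>0$ and $(w_2^+)_\mu<0$ (Lemma~\ref{lem:mwmu}), $\phi$ is strictly increasing, and the intermediate value theorem gives a unique zero $\mu_b$. For the limits, one only needs $\phi\to+\infty$ as $\mu\to+\infty$ (from $w_1^+$) and $\phi\to-\infty$ or at least $\phi<0$ as $\mu$ decreases---the latter comes either from $w_2^+(a)\to+\infty$ as $\mu\to-\infty$ (the \emph{same} estimate as for $w_1^+$, by symmetry) or, if $w_1^+$ ceases to be defined at $a$ at some finite $\mu=b$, from $w_1^+(a;b)=0<w_2^+(a;b)$. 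Either way, no singular-perturbation analysis of $\gamma_\mu$ as $\mu\to-\infty$ is needed, and there is no separate threshold step: $\phi(\mu_b)=0$ \emph{means} $w_1^+\equiv w_2^+$, so the unstable manifold of $(u_1,0)$ coincides with the stable manifold of $(u_2,0)$ and the heteroclinic exists.

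In short, your one-sided approach can be made to work, but the two difficulties you flag are real and require extra machinery (Fenichel-type tracking for the first, a separatrix argument for the second). Bringing in $w_2^+$ from the outset, as the paper does, converts both into a single monotone-IVT step.
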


%%%%%%%%%%%%%%%%%%%%
%%%%%%%%%%%%%%%%%%%%

Subsequently, we state the existence and nonexistence of homoclinic orbits
in (\ref{eq1_TV}), which satisfy
\begin{equation}\label{eq:bctmp0}\tag{HO}
 (u (\pm \infty), w (\pm \infty)) = (\overline{u},0),
  \quad (u(z), w(z)) \not\equiv (\overline{u},0),
\end{equation}
where $\overline{u}$ is a positive number satisfying $f(\overline{u})=0$.
The solution to (\ref{eq1_TV}) under condition (\ref{eq:bctmp0}) is called 
a traveling pulse solution.
As will be shown in Proposition~\ref{prop:BMBF}, 
we can find $c_* = c_* (K)$ such that 
$\mu_* \equiv \mu_b (K, c_* (K)) = \mu_f (K, c_* (K))$
under one of the following conditions:
\begin{equation}\label{eq1_cs}
 K \in (K_1, K_M)
  \quad \mbox{or} \quad \mbox{(H) and} \ K \in (K_0, K_1].
  \tag{C}
\end{equation}
This implies that (\ref{eq1_TV}) for $(c, \mu) = (c_*, \mu_*)$ 
has a {\it heteroclinic cycle} consisting of the equilibrium points
$(u_1,0)$ and $(u_2,0)$ and two heteroclinic orbits that join them.
After that, we divided $\mathcal{D}_1$ into
\[
\begin{aligned}
 \mathcal{D}_{1,1} & \equiv 
 \{ (K, c) \in \mathcal{D}_1 \ | \ c_*(K) < c < c_M (K) \}, \\
 \mathcal{D}_{1,2} & \equiv 
 \{ (K, c) \in \mathcal{D}_1 \ | \ c_1 (K) < c < c_*(K) \}, \\
 \mathcal{D}_{1,3} & \equiv \{ (K, c) \in \mathcal{D}_1 \ | \ c = c_*(K) \}.
\end{aligned}
\]
%

%%%%%%%%%%%%%%%%%%%%
%%%%%%%%%%%%%%%%%%%%

\begin{theorem}\label{thm:HOCO}
 The following statements hold:
 \begin{itemize}
  \item[(i)]
	    If $(K,c) \not \in \mathcal{D}_1 \cup \mathcal{D}_2$,
	    then (\ref{eq1_TV}) has no solution satisfying \eqref{eq:bctmp0}
	    for any $\overline{u}$ and $\mu \in \mathbb{R}$.
  \item[(ii)]
	     If $(K,c) \in \mathcal{D}_1 \cup \mathcal{D}_2$,
	     then (\ref{eq1_TV}) has no solution 
	     satisfying \eqref{eq:bctmp0} with $\overline{u} = u_0$
	     for any $\mu \in \mathbb{R}$.
  \item[(iii)]
	      Assume (\ref{eq1_cs}).
	      If $(K,c) \in \mathcal{D}_{1,2} \cup \mathcal{D}_{1,3}$ 
	      (resp. $(K,c) \in \mathcal{D}_{1,1} \cup \mathcal{D}_{1,3}$),
	      then (\ref{eq1_TV}) with \eqref{eq:bctmp0} has no solution 
	      for $\overline{u}=u_1$ 
	      (resp. $\overline{u}=u_2$) for any $\mu \in \mathbb{R}$.
  \item[(iv)]
	     Assume (\ref{eq1_cs}).
	     If $(K,c) \in \mathcal{D}_{1,1}$ 
	     (resp. $(K,c) \in \mathcal{D}_{1,2} \cup \mathcal{D}_2$),
	     then there exists $\mu^1_{pul} = \mu^1_{pul} (K,c)$ 
	     (resp. $\mu^2_{pul} = \mu^2_{pul}(K,c)$) 
	     such that (\ref{eq1_TV}) with (\ref{eq:bctmp0})
	     has a solution with $\overline{u}=u_1$ (resp. $\overline{u}=u_2$)
	     if and only if $\mu=\mu^1_{pul}$ (resp. $\mu=\mu^2_{pul}$).
 \end{itemize}
\end{theorem}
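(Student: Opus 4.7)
\noindent\emph{Strategy.} Introduce the Lyapunov-type function
\[
E(u,w) = \frac{w^2}{2} - G(u), \qquad G(u) = \int_{\bar{u}}^{u} g_1(s) f(s) \, ds,
\]
whose derivative along any orbit of (\ref{eq1_TV}) is $E_z = g_2(u)(f'(u) + \mu) w^2$. Any homoclinic orbit to $(\bar{u}, 0)$ satisfies $E(\pm\infty) = 0$, so integration yields the Melnikov-type identity
\[
\mu \int_{-\infty}^{\infty} g_2(u) w^2 \, dz = - \int_{-\infty}^{\infty} g_2(u) f'(u) w^2 \, dz,
\]
which pins $\mu$ down uniquely in terms of the orbit and supplies uniqueness throughout.

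\noindent\emph{Parts (i) and (ii).} For (i), Lemma \ref{lem:prf} will give that outside $\mathcal{D}_1 \cup \mathcal{D}_2$ the nonlinearity $f$ has either no positive zero or a single saddle $u_2$ with $f$ of constant sign on each side. In the former case there is no equilibrium in $u > 0$, and in the latter the forcing $g_1 f$ pushes $w$ monotonically away from zero on each branch of $W^u(u_2)$, so both branches escape and no return is possible. For (ii), since $u_0$ lies between the local max $u_M$ and the local min $u_m$ of $Ku - V(u)$, one has $f'(u_0) < 0$ and $\det J(u_0, 0) = -g_1(u_0) f'(u_0) > 0$, so $(u_0, 0)$ is a center, focus, or node but never a saddle. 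In the hyperbolic cases linearization forbids a trajectory approaching from both $z \to \pm\infty$; in the degenerate center case $h(u_0, \mu) = 0$ combined with the Melnikov identity forces $w \equiv 0$ along the candidate orbit.

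\noindent\emph{Parts (iii) and (iv).} These are handled together by a shooting argument. By Theorem \ref{thm:HECO}, for each $(K, c) \in \mathcal{D}_1$ there exist unique $\mu_b(K,c)$ and $\mu_f(K,c)$ giving the heteroclinic connections $u_1 \to u_2$ and $u_2 \to u_1$; by Proposition \ref{prop:BMBF}, $c_*$ is characterized by $\mu_b(K, c_*) = \mu_f(K, c_*)$. For $\bar{u} = u_1$, introduce a shooting function $\Psi_1(\mu)$ measuring the signed distance, on a Poincar\'e section transverse to the stable direction at $(u_1, 0)$, between $W^u(u_1, \mu)$ and $W^s(u_1, \mu)$; a homoclinic to $u_1$ corresponds to $\Psi_1(\mu) = 0$. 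Since $\partial_\mu w_z = g_2(u) w$ has a fixed sign on $\{w > 0\}$, a comparison argument makes $\Psi_1$ strictly monotone in $\mu$. At $\mu = \mu_b(K,c)$ the orbit $W^u(u_1)$ terminates at $(u_2, 0)$ rather than returning, fixing one sign of $\Psi_1$; pushing $\mu$ to the opposite extreme traps $W^u(u_1)$ in a loop near $u_0$ and fixes the opposite sign of $\Psi_1$, but this sign change only occurs when $(K,c) \in \mathcal{D}_{1,1}$ (precisely because the relative position of $\mu_b$ and $\mu_f$ is reversed across $c_*$). The IVT then gives the unique $\mu^1_{pul}$ in $\mathcal{D}_{1,1}$ and the nonexistence in $\mathcal{D}_{1,2} \cup \mathcal{D}_{1,3}$. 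A symmetric construction with $\Psi_2(\mu)$ attached to $(u_2, 0)$ proves the corresponding existence in $\mathcal{D}_{1,2} \cup \mathcal{D}_2$ and nonexistence in $\mathcal{D}_{1,1} \cup \mathcal{D}_{1,3}$.

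\noindent\emph{Main obstacle.} The hardest step is controlling the global endpoint behavior of $\Psi_1$ and $\Psi_2$ as $\mu$ ranges over all admissible values, particularly in $\mathcal{D}_2$, where a candidate homoclinic loop to $u_2$ must encircle $u_0$ and can extend arbitrarily close to $u = 0$. This is exactly where hypothesis (\ref{cd:nu}) is used: after the substitution $\rho = 1/u$, (H) becomes $\int_0^1 g_2(u) \, du = \infty$, which guarantees that a shooting orbit cannot escape through $u = 0$ in finite $z$ and so remains available for the IVT. This integrability is essential for the existence of $\mu^2_{pul}$ in $\mathcal{D}_2$, and is precisely why the hypothesis (\ref{eq1_cs}) enters in parts (iii) and (iv).
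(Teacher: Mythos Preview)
Your overall architecture for (iii)--(iv) matches the paper's: shoot from the saddle, compare the forward and backward branches of the unstable/stable manifold on the section $\{w=0\}$, use strict monotonicity in $\mu$ (the paper records this as $\pm(u_j^\pm)_\mu>0$ in Lemma~\ref{lem:mwmc}), and decide existence vs.\ nonexistence by the sign of $\mu_b-\mu_f$ coming from Proposition~\ref{prop:BMBF}. Your reading of the role of (\ref{cd:nu}) is also right, and after the change of variables it is equivalent to $\int_0^1 g_2(u)\,du=\infty$; the paper uses it exactly to force $u_2^\pm>0$ (Lemma~\ref{u2pmp}) so the shooting argument for $\mathcal{D}_2$ closes.

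There is, however, a genuine gap in your treatment of (ii) at the degenerate value $\mu=-f'(u_0)$. Your claim is that the identity $\int g_2(u)\,h(u,\mu)\,w^2\,dz=0$ forces $w\equiv 0$. It does not: with $\mu=-f'(u_0)$ one has $h(u,\mu)=f'(u)-f'(u_0)$, and since $f'(u_0)<0$ is a local extremum of nothing in particular (indeed $f''(u_0)\neq 0$ generically), $h(\cdot,\mu)$ changes sign across $u_0$, so the integrand can cancel without $w$ vanishing. The paper's argument is geometric: the eigenvalues at $(u_0,0)$ are $\pm i\omega_0$, so any orbit converging to $(u_0,0)$ in forward or backward time must wind around it infinitely often; but then the forward spiral (for $z\gg 0$) and the backward spiral (for $z\ll 0$) both encircle $(u_0,0)$ inside an arbitrarily small disk and are forced to cross, contradicting ODE uniqueness. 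You need this self-intersection step or something equivalent.

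Your sketch for (i) is also too coarse. Outside $\mathcal{D}_1\cup\mathcal{D}_2$ the structure of $f$ is not simply ``no zero or one saddle'': there are boundary cases (e.g.\ $c=c_M$ with a tangential zero at $u_M$), and your claim that ``the forcing $g_1f$ pushes $w$ monotonically away from zero'' ignores the damping term $g_2(u)h(u,\mu)w$, which can reverse the sign of $w_z$. The paper avoids this by integrating the exact relation $(w-H(u))_z=g_1(u)f(u)$ with $H$ a primitive of $g_2h$, obtaining $\int_{-\infty}^{\infty} g_1(u)f(u)\,dz=0$; a sign analysis of $f$ on $\mathcal{D}_3$ and $\mathcal{D}_4$ then yields the contradiction (Lemma~\ref{lem:neho}). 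This is close in spirit to your Melnikov identity but applied at the level of $w$ rather than $w^2$, and it is what actually closes the argument.
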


%%%%%%%%%%%%%%%%%%%%
%%%%%%%%%%%%%%%%%%%%

Finally, we discuss the existence or nonexistence 
of periodic orbits in (\ref{eq1_TV}).
Considering the Poincar\'{e} section $\{ w = 0 \}$,
we obtain a periodic solution $(u, w)$ to (\ref{eq1_TV}) 
satisfying the initial condition $(u (0), w (0)) = (q, 0)$.

%%%%%%%%%%%%%%%%%%%%
%%%%%%%%%%%%%%%%%%%%

\begin{theorem}\label{thm:ptws} 
 Assume (\ref{eq1_cs}).
 Let $q$ satisfy
 \begin{equation}\label{cd:rq}
  q \in 
   \left\{
    \begin{aligned}
     & (u_1, u_0) && \mbox{if } \ (K,c) \in \mathcal{D}_{1,1} 
     \cup \mathcal{D}_{1,3}, \\
     & (u_0, u_2) && \mbox{if } \ (K,c) \in \mathcal{D}_{1,2} 
     \cup \mathcal{D}_2.
    \end{aligned}
   \right.
 \end{equation}
 Then there exists $\mu_{per} = \mu_{per} (K, c, q) > 0$ 
 such that (\ref{eq1_TV}) has a periodic solution 
 with the initial condition $(u (0), w (0)) = (q, 0)$ for $\mu = \mu_{per}$.
 Moreover, if $(K, c) \not \in \mathcal{D}_1 \cup \mathcal{D}_2$
 or $\mu \neq \mu_{per}$, there does not exist any periodic solution to (\ref{eq1_TV}).
\end{theorem}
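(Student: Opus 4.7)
My idea is to reduce the existence of a periodic orbit through $(q,0)$ to a scalar root-finding problem in $\mu$, solve it by a Poincar\'e return-map argument, and use the homoclinic/heteroclinic trajectories supplied by Theorems~\ref{thm:HECO} and~\ref{thm:HOCO} as end-behaviour bookends for the intermediate value theorem. As a preliminary step, set $F(u) \equiv \int_{u_0}^{u} g_1(s) f(s)\,ds$ and $E(u,w) \equiv w^2/2 - F(u)$. Along the flow of (\ref{eq1_TV}),
\begin{equation*}
\frac{dE}{dz} \;=\; g_2(u)\bigl[f'(u)+\mu\bigr] w^2,
\end{equation*}
so any periodic solution of period $T$ satisfies
\begin{equation*}
\mu\int_0^{T} g_2(u)\,w^2\,dz \;=\; -\int_0^{T} g_2(u)\,f'(u)\,w^2\,dz.
\end{equation*}
This identity implicitly determines $\mu$ and, via a sign check of $f'$ across the relevant subintervals (with $f'(u_0)<0$ dominating the weighted average), yields the claim $\mu_{per}>0$ once a periodic orbit is produced.

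Fix $q$ in the admissible interval; WLOG $q \in (u_1, u_0)$, so $f(q)>0$ and the orbit through $(q,0)$ initially enters $\{w>0\}$. For each $\mu$ in a suitable open range I would show that the orbit stays in a bounded region, first crosses $\{w=0\}$ at some $q_1(\mu)>u_0$, and then (after a pass through $\{w<0\}$) returns to $\{w=0\}$ at a point $\Pi_\mu(q) \in (u_1,u_0)$. Smoothness of $(\mu,q) \mapsto \Pi_\mu(q)$ on its domain is standard. The critical technical step is strict monotonicity $\partial_\mu \Pi_\mu(q) \neq 0$. I plan to obtain this from the variational equation in $\mu$: writing $(\partial_\mu u, \partial_\mu w)$ as the solution of the system linearised along the orbit with inhomogeneity $(0, g_2(u) w)$, a variation-of-parameters representation combined with the fixed sign of $g_2$ and the controlled signs of $w$ on the two half-loops forces the displacement $\partial_\mu \Pi_\mu(q)$ to have one sign.

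To locate the zero of $\Pi_\mu(q)-q$, I would use the end-behaviour coming from Theorem~\ref{thm:HOCO}(iv) and Proposition~\ref{prop:BMBF}: as $\mu$ approaches $\mu^1_{pul}(K,c)$ (respectively $\mu_*$ or $\mu^2_{pul}(K,c)$, depending on the subregion of $\mathcal{D}_1 \cup \mathcal{D}_2$), the orbit through $(q,0)$ is squeezed against the corresponding homoclinic loop or heteroclinic cycle, while for $\mu$ moved in the opposite direction the orbit spirals toward (or away from) the center at $(u_0,0)$. These two regimes produce opposite signs of $\Pi_\mu(q)-q$. Combined with the monotonicity from the previous step, the intermediate value theorem yields a unique $\mu_{per}=\mu_{per}(K,c,q)>0$.

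For the nonexistence statements: if $(K,c) \notin \mathcal{D}_1 \cup \mathcal{D}_2$, then (by the structure of $f$ described after $\mathcal{D}_2$) $F$ has no interior local maximum in $(0,\infty)$, so the conservative part of the flow has no center surrounded by closed level sets of $E$; every orbit is either monotone in $u$ or escapes. A Bendixson-type argument using $dE/dz=g_2(f'+\mu)w^2$ together with $g_2>0$ then rules out any periodic orbit of the full system. The clause $\mu\neq\mu_{per}$ in the \emph{moreover} part is the uniqueness half of the monotonicity step. The main obstacle I foresee is precisely this monotonicity $\partial_\mu \Pi_\mu(q) \neq 0$: because $h=f'+\mu$ changes sign along the orbit, the statement is not a routine "more damping, smaller orbit" conclusion and will require a careful variational analysis exploiting the affine dependence of $h$ on $\mu$ and the fixed sign of $g_2 w^2$.
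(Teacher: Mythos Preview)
Your return-map strategy is in the right spirit but takes a harder road than the paper's, and the obstacle you flag---monotonicity of the full Poincar\'e map $\Pi_\mu$---is real and not resolved by your sketch. The paper avoids it entirely by splitting the loop in half: instead of following the orbit from $(q,0)$ all the way around, it runs it \emph{forward} until the first return to $\{w=0\}$ at a point $u^+(\mu)$ and \emph{backward} until the first return at $u^-(\mu)$; a periodic orbit exists iff $u^+(\mu)=u^-(\mu)$. Each of $u^\pm$ is monotone in $\mu$ with opposite signs, $\pm(u^\pm)_\mu>0$, and this follows from the same clean scalar argument as Lemma~\ref{lem:mwmc}: differentiating the graph equation $w_u=g_1 f/w+g_2 h$ in $\mu$ gives a first-order linear ODE for $(w^2)_\mu$ whose inhomogeneity $2g_2 w$ has a fixed sign on each half-loop, so Lemma~\ref{lem:Wpes0} applies. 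By contrast, your variational system for the full orbit has inhomogeneity $(0,g_2 w)$, but the fundamental matrix of the linearised planar system has no sign control, so the variation-of-parameters integral does not obviously have one either; ``affine dependence of $h$ on $\mu$'' does not tame that matrix.

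For the end-behaviour bounds the paper also takes a simpler route: it uses the \emph{heteroclinic} values $\mu_b,\mu_f$ from Theorem~\ref{thm:HECO}, not the homoclinic values $\mu^j_{pul}$. Since the orbit from $(q,0)$ cannot cross the unstable manifold of $(u_1,0)$, one gets $u^+<u_1^+=u_2$ at $\mu=\mu_b$, hence the threshold $\overline{\mu}^+$ at which $u^+$ reaches $u_2$ satisfies $\overline{\mu}^+>\mu_b$; similarly $\underline{\mu}^-<\mu_f$, and $\mu_b\ge\mu_f$ on $\mathcal{D}_{1,1}\cup\mathcal{D}_{1,3}$ by Proposition~\ref{prop:BMBF} closes the gap. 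Your alternative---squeezing against $\mathcal{O}^1_{pul}$ on one side and ``spiralling toward $(u_0,0)$'' on the other---is vaguer, and on $\mathcal{D}_{1,3}$ (where no homoclinic exists) you would anyway have to fall back on the heteroclinic cycle. Finally, your argument for $\mu_{per}>0$ from the energy identity needs more than ``$f'(u_0)<0$ dominates the weighted average'': the orbit also passes through regions where $f'>0$, and the sign of $\int g_2 f' w^2$ is not obvious without additional estimates.
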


%%%%%%%%%%%%%%%%%%%%
%%%%%%%%%%%%%%%%%%%%

As previously mentioned, (\ref{eq1_TV}) exhibits a heteroclinic cycle 
for $(K, c) \in \mathcal{D}_{1,3}$ under assumption~(\ref{eq1_cs}).
As discussed in \cite{kokubu1988homoclinic}, a global bifurcation
can occur in systems with heteroclinic cycles, 
where a homoclinic orbit emerges from a heteroclinic cycle.
Sections~\ref{sec_bifurcation} and \ref{Numerical results}
demonstrate that such a bifurcation occurs in (\ref{eq1_TV}).
In our case, a homoclinic orbit obtained in Theorem~\ref{thm:HOCO} (iv)
bifurcates from the heteroclinic cycle.
However, this study does not use the global bifurcation results
established by \cite{kokubu1988homoclinic}.

All theorems are proven using phase-plane analysis. 
The key of the proofs is the monotonicity of 
solution trajectories with respect to $\mu$
(see Lemmas~\ref{lem:mwmu}, \ref{lem:mwmc} below).
The shooting method facilitates us to directly study the behavior 
of the solution initiated from the equilibrium points.
Such methods are widely used to demonstrate the existence of traveling wave solutions
(see \cite{aronson2006nonlinear}).

The remainder of this paper is organized as follows.
Section~\ref{sec:pre} discusses the investigation of the basic properties of 
(\ref{eq_cOV}) and (\ref{eq1_TV}).
Additionally, the properties of $f$ are described.
In Section~\ref{sec:TF}, we consider the existence of the heteroclinic orbits 
in (\ref{eq1_TV}) and Theorem~\ref{thm:HECO}. 
Section~\ref{sec_mubmuf} discusses the properties of 
$\mu_b$ and $\mu_f$ as given in Theorem~\ref{thm:HECO} 
(see Proposition~\ref{prop:BMBF}).
Sections~\ref{sec:TPS} and \ref{sec_Structure of periodic traveling wave solutions} 
address the existence of homoclinic and periodic orbits
in (\ref{eq1_TV}) as stated in Theorems~\ref{thm:HOCO} and \ref{thm:ptws}.
Section~\ref{sec_bifurcation} discusses the parameter dependence 
of heteroclinic, homoclinic, and periodic orbits. 
Additionally, we study the bifurcations that occur when these solutions appear.
Section~\ref{Numerical results} presents a numerical investigation of the relationship 
among $K, c, \mu$ in the presence of traveling wave solutions, revealing 
the global bifurcation structure of (\ref{eq1_TV}) using AUTO \cite{doedel2007auto}.
One of our results, shown in Figure~\ref{c-K}, 
is qualitatively equivalent to that in Figure~4 of \cite{lee2004steady},
where the authors categorized the parameter space $(K, c)$
based on the topological structure of the flow diagram
and highlighted the existence of a heteroclinic cycle without using AUTO.

%%%%%%%%%%%%%%%%%%%%
%%%%%%%%%%%%%%%%%%%%

\section{Preliminaries}\label{sec:pre}

\subsection{Basic properties of (\ref{eq_cOV})}

To begin with, we consider the local existence and uniqueness 
of a solution in (\ref{eq_cOV}).
If we consider (\ref{eq_cOV}) in the whole line $S \equiv \r$,
we impose
\begin{equation}\label{eq2_bcR}
 \lim_{x \to \pm \infty} (\rho (x, t), v (x, t)) 
  = (\overline{\rho}_\pm, \overline{v}_\pm).
\end{equation}
If we are concerned with the bounded interval $S \equiv (0, L)$,
we impose the periodic boundary conditions for $\rho$ and $v$.
Let $C^{k + \alpha} (S)$ for integer $k \ge 0$ and $0 < \alpha < 1$
be the set of $k$th-differentiable functions $\phi : S \to \r$
whose $k$th-derivatives are bounded and $\alpha$-H\"older continuous \cite{MR1814364}.
We find a unique solution $(\rho, v) \in C^{1+\alpha} (S) \times C^{2+\alpha} (S)$
locally in time.
The following proposition is standard so that we omit the details of the proof
(see \cite{MR1625845} and \cite{MR1329547}).

%%%%%%%%%%%%%%%%%%%%
%%%%%%%%%%%%%%%%%%%%

\begin{proposition}\label{thm_local_existence}
 Give $(\rho_0, v_0) \in C^{1+\alpha} (S) \times C^{2+\alpha} (S)$
 and assume that $T > 0$ is sufficiently small.
 Then the problem (\ref{eq_cOV}) with the initial condition
 $(\rho (z, 0), v (z, 0)) = (\rho_0 (z), v_0 (z))$ has a unique solution 
 $(\rho, v)$ in $C^{1+\alpha} (S) \times C^{2+\alpha} (S)$ in $(0, T)$. 
\end{proposition}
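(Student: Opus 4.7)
The plan is to set up a fixed-point scheme that decouples the first-order transport equation for $\rho$ from the quasilinear parabolic equation for $v$, and close the loop via the Banach contraction mapping theorem on a small time interval. Introduce the parabolic Hölder space
\[
X_T \equiv C^{1+\alpha,(1+\alpha)/2}(\overline{S}\times[0,T]) \times C^{2+\alpha,1+\alpha/2}(\overline{S}\times[0,T]),
\]
and let $B_T \subset X_T$ be the closed ball of radius $R$ around the trivial time-extension of $(\rho_0, v_0)$. Since $\rho_0 \in C^{1+\alpha}$ we may assume $\rho_0 \geq 2m > 0$; by shrinking $T$, every $\tilde\rho$ in $B_T$ stays in $[m,\,\|\rho_0\|_\infty+1]$, so under assumptions (A1)--(A2) the nonlinearities $1/\tilde\rho$, $\k(\tilde\rho)$, $V(\tilde\rho^{-1})$, and $P(\tilde\rho)=-V(\tilde\rho^{-1})/(2\tau)$ are uniformly smooth on $B_T$, which is what keeps the scheme nondegenerate.

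Given $(\tilde\rho,\tilde v)\in B_T$, define $\Phi(\tilde\rho,\tilde v)=(\rho,v)$ by two successive linear solves. First, solve
\[
\rho_t + (\tilde v\,\rho)_x = 0, \qquad \rho(x,0)=\rho_0(x),
\]
by the method of characteristics; since $\tilde v \in C^{2+\alpha,1+\alpha/2}$, the flow map is $C^{1+\alpha}$ in $x$ and close to the identity for small $T$, so the explicit representation along characteristics yields $\rho \in C^{1+\alpha,(1+\alpha)/2}$ together with upper and lower bounds inherited from $\rho_0$. Next, with this $\rho$ fixed, solve the linear parabolic problem
\[
v_t - \k(\rho)v_{xx} + \frac{1}{\tau}v = -\tilde v\,\tilde v_x - \frac{1}{\rho}\frac{d P(\rho)}{dx} + \frac{1}{\tau}V(\rho^{-1}), \qquad v(x,0)=v_0(x),
\]
under periodic boundary conditions when $S=(0,L)$, or with the asymptotic conditions (\ref{eq2_bcR}) when $S=\r$. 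The leading coefficient $\k(\rho)$ lies in $C^{\alpha,\alpha/2}$ with a uniform positive lower bound, and the right-hand side lies in $C^{\alpha,\alpha/2}$, so Schauder theory for linear parabolic equations \cite{MR1814364} produces a unique $v \in C^{2+\alpha,1+\alpha/2}$ with an a priori estimate depending only on the data norms.

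It remains to verify that $\Phi:B_T\to B_T$ is a contraction for $T$ small. Differences of two transport iterates $\rho^i$ driven by different $\tilde v^i$ are controlled via Gronwall along characteristics, producing a factor $O(T)\|\tilde v^1 - \tilde v^2\|_{X_T}$; differences of parabolic iterates are controlled by subtracting the equations, treating the differences of coefficients and sources as perturbations, and applying the Schauder estimate, which gains a factor $T^{\alpha/2}$. Composing yields a strict contraction for $T$ small, so Banach's theorem supplies a unique fixed point in $B_T$ and uniqueness in the claimed regularity class (the analogous estimate applied to any two hypothetical solutions forces their difference to vanish). The main obstacle, and the point forcing $T$ to be small, is that the transport equation supplies no smoothing: one must balance the regularity budget so that the $C^{1+\alpha}$ output for $\rho$ is exactly what feeds $\k(\rho)$ back into Schauder theory, while the $C^{2+\alpha}$ regularity of $\tilde v$ is just enough to obtain a $C^{1+\alpha}$ flow map. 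Losing a derivative anywhere in this loop would break the closure of the iteration, so one must also track lower bounds on $\rho$ and positivity of $\k(\rho)$ uniformly over $B_T$ throughout the argument.
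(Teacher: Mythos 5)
Your argument is correct and is precisely the standard scheme that the paper alludes to: the authors omit the proof entirely, declaring it standard and citing references, and your decomposition into a characteristics solve for $\rho$ followed by a linear Schauder solve for $v$, closed by a contraction on a small time interval, is the canonical way to carry it out. The only point worth making explicit (which the paper also leaves implicit) is that positivity of $\rho_0$ must be assumed outright so that $1/\rho$, $\k(\rho)$, and $V(\rho^{-1})$ stay uniformly regular; it does not follow from $\rho_0\in C^{1+\alpha}(S)$ as your phrasing suggests.
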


%%%%%%%%%%%%%%%%%%%%
%%%%%%%%%%%%%%%%%%%%

As described in Introduction, congestion in the microscopic 
optimal velocity model proposed in \cite{bando1994structure}
arises via Hopf bifurcation associated with the destabilization 
of the free flow \cite{gasser2004bifurcation}.
This fact strongly implies the instability 
of a constant stationary solution $(\rho_*, v_*)$ in (\ref{eq_cOV}), 
where $v_* = V (\rho_*^{-1})$.
To examine the $\rho_*$-dependency of the stability,
we set $(\rho, v) = (\rho_*, v_*) + (\phi, \psi) e^{\l t} e^{i k x}$
and study the linearized eigenvalue problem of (\ref{eq_cOV}), given by
\begin{equation}\label{eq2_ep_cOV}
 \left\{
 \begin{aligned}
  \l \phi + i k (\rho_* \psi + v_* \phi) & = 0, \\
  \l \psi + i k v_* \psi 
  & = - \dfrac{i k}{2 \tau \rho_*^3} V' (\rho_*^{-1}) \phi
  - \k (\rho_*) k^2 \psi
  - \dfrac{1}{\tau} \left(\dfrac{1}{\rho_*^2} V' (\rho_*^{-1}) \phi + \psi \right).
 \end{aligned}
 \right.
\end{equation}
Then we obtain the characteristic equation 
\[
(\l + i k v_*) \left( \l + i k v_* + \dfrac{1}{\tau} + \k (\rho_*) k^2 \right)
- \dfrac{i k}{\tau \rho_*}
\left( \dfrac{i k }{2 \rho_*} + 1 \right) V' (\rho_*^{-1}) = 0.
\]
The eigenvalues $\l = \l_{\pm} (k)$ for each $k \in \r$ 
are explicitly given by 
\[
\begin{aligned}
 \l_{\pm} (k) 
 & = - i k v_* + \dfrac{1}{2}
 \left( - \dfrac{1}{\tau} - \k (\rho_*) k^2 \pm \sqrt{z (k)} \right), \\
 z (k) & \equiv \left( \dfrac{1}{\tau} + \k (\rho_*) k^2 \right)^2
 + \dfrac{4 i k}{\tau \rho_*} 
 \left( \dfrac{i k}{2 \rho_*} + 1 \right) V' (\rho_*^{-1}). 
\end{aligned}
\]
Obviously, $\mbox{Re} \l_- (k) < 0$ for any $\rho_*$ and $k$,
where $\mbox{Re} \l$ stands for the real part of complex number $\l$.
To determine the stability of the stationary solution, it is sufficient 
to consider the sign of the real part of $\l_+ (k)$ for each $\rho_*$ and $k$.

%%%%%%%%%%%%%%%%%%%%
%%%%%%%%%%%%%%%%%%%%

\begin{lemma}\label{lemma_sign_mur}
 If $1 > 2 \tau V' (\rho_*^{-1})$, then
 there are $k_1 < 0$ and $k_2 > 0$ such that $\mbox{Re} \l_+ (k) > 0$
 in $k_1 < k < k_2$ except for $k = 0$
 and $\mbox{Re} \l_+ (k) < 0$ in $k < k_1, k_2 < k$.
 On the other hand, if $1 \le 2 \tau V' (\rho_*^{-1})$, 
 $\mbox{Re} \l_+ (k) < 0$ in any $k \neq 0$.
\end{lemma}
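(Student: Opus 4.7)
The plan is to determine the sign of $\mbox{Re}\,\l_+(k)$ directly from the square-root formula given above by reducing to a real polynomial inequality in $s = k^2$, and then analyzing when that inequality holds. First I dispose of $\l_-$: choosing the principal branch of the square root gives $\mbox{Re}\,\sqrt{z(k)} \ge 0$, hence
\[
\mbox{Re}\,\l_-(k) \;=\; -\tfrac{1}{2}A(k) \;-\; \tfrac{1}{2}\mbox{Re}\,\sqrt{z(k)} \;\le\; -\tfrac{1}{2}A(k) \;<\; 0
\]
for every $k$, where $A(k) \equiv 1/\tau + \k(\rho_*) k^2 > 0$ by (A2). So only $\l_+$ can be unstable, and I focus on $\mbox{Re}\,\l_+(k) = \tfrac{1}{2}\bigl(-A(k) + \mbox{Re}\,\sqrt{z(k)}\bigr)$ and when it is positive, negative, or zero.

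To reduce to a polynomial inequality I write $z(k) = x + iy$ with
\[
x \;=\; A(k)^2 - \frac{2 V'(\rho_*^{-1})\, k^2}{\tau \rho_*^2}, \qquad y \;=\; \frac{4 V'(\rho_*^{-1})\, k}{\tau \rho_*}.
\]
The identity $(\mbox{Re}\,\sqrt{z})^2 = (|z| + x)/2$ for the principal branch converts $\mbox{Re}\,\sqrt{z(k)} > A(k)$ (both sides non-negative) into $|z(k)| > 2A(k)^2 - x$, and because $2A^2 - x = A^2 + 2 V'(\rho_*^{-1}) k^2/(\tau\rho_*^2) > 0$ always, a second squaring is reversible and yields
\[
y^2 \;>\; 4 A(k)^2 \bigl(A(k)^2 - x\bigr).
\]
Substituting the explicit expressions and factoring $k^2$ out (for $k \ne 0$) reduces this to $G(s) > 0$, where $s = k^2$ and $G$ is an explicit quadratic polynomial in $s$ with leading coefficient controlled by $\k(\rho_*) > 0$. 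A direct computation shows that $G(0) = 0$ precisely on the threshold $2\tau V'(\rho_*^{-1}) = 1$, with the sign of $G(0)$ flipping across this threshold; thus the dichotomy in the lemma is exactly the dichotomy of the sign of $G(0)$.

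The two cases of the lemma then follow from an elementary sign analysis of $G$. If $1 > 2\tau V'(\rho_*^{-1})$, combining the signs of $G(0)$ and the leading coefficient with a discriminant check produces a unique $s_* > 0$ with $G(s) > 0$ on $(0, s_*)$; setting $k_2 = \sqrt{s_*}$ and $k_1 = -k_2$ gives the instability band $\mbox{Re}\,\l_+(k) > 0$ for $k_1 < k < k_2$, $k \ne 0$, and $\mbox{Re}\,\l_+(k) < 0$ outside. If $1 \le 2\tau V'(\rho_*^{-1})$, the same inspection forces $G(s) \le 0$ on $[0,\infty)$ with strict inequality for $s > 0$, so $\mbox{Re}\,\l_+(k) < 0$ for every $k \ne 0$. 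The exceptional point $k = 0$ is immediate: $z(0) = 1/\tau^2$ gives $\sqrt{z(0)} = A(0) = 1/\tau$ and hence $\l_+(0) = 0$, which is why strict positivity is asserted "except for $k = 0$".

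The main obstacle is purely algebraic: verifying that both squarings in the reduction are reversible uniformly in $k$ (which hinges on $2A^2 - x > 0$), and then arranging $G$ so that the scalar condition $2\tau V'(\rho_*^{-1}) = 1$ drops out transparently as the boundary between the two regimes. Once that reduction is in hand, the remainder is a short sign analysis of a quadratic in $s = k^2$.
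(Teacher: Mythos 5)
Your reduction is a genuinely different route from the paper's. The paper works in the $k$-variable with $\l_r(k)\equiv\mbox{Re}\,\l_+(k)$ directly: it Taylor-expands at $k=0$ to get $\l_r(0)=\l_r'(0)=0$ and a formula for $\l_r''(0)$ (and, at the threshold, $\l_r^{(4)}(0)<0$); it computes $\lim_{k\to\pm\infty}\l_r(k)=-V'(\rho_*^{-1})/(2\tau\rho_*^2\k(\rho_*))<0$; and it shows that at any nonzero zero of $\l_r$ one has $\l_r'(k)=-k^3\k(\rho_*)/(k^2+\rho_*^2)$, so crossings are one-signed. You instead push the algebra all the way into a single explicit polynomial inequality in $s=k^2$; this is cleaner and leaves nothing to do but inspect a quadratic, and your observation that both squarings are reversible because $2A^2-x>0$ is exactly the point that must be justified.

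That said, there is a sign error in the case assignment that you should resolve before this is a proof. Carrying your substitution through and dividing out the positive factor $8k^2V'(\rho_*^{-1})/(\tau\rho_*^2)$ gives, for $k\neq 0$,
\[
\mbox{Re}\,\l_+(k)>0
\iff
2\tau V'(\rho_*^{-1})>\bigl(1+\tau\k(\rho_*)k^2\bigr)^2,
\]
so the normalized $G$ is $G(s)=2\tau V'(\rho_*^{-1})-(1+\tau\k(\rho_*)s)^2$, which has $G(0)=2\tau V'(\rho_*^{-1})-1$ and is strictly decreasing on $[0,\infty)$ (its leading term is $-\tau^2\k(\rho_*)^2 s^2$, not positive). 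Hence $G(0)>0$, and with it the instability band, occurs exactly when $1<2\tau V'(\rho_*^{-1})$, and $G<0$ on $(0,\infty)$ when $1\ge 2\tau V'(\rho_*^{-1})$. Your proposal attaches the band to the opposite case $1>2\tau V'(\rho_*^{-1})$, and asserts $G(0)>0$ there, which is false. You should also note that the same reversal is present in the paper itself: from the displayed formula $\l_r''(0)=-V'(\rho_*^{-1})(1-2\tau V'(\rho_*^{-1}))/\rho_*^2$, one has $\l_r''(0)<0$ under $1>2\tau V'(\rho_*^{-1})$, which contradicts the line ``if $1>2\tau V'(\rho_*^{-1})$, then $\l_r''(0)>0$'' two lines later, and the standard OV-model instability criterion is indeed $2\tau V'>1$. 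Redo the sign check on $G(0)$ and confirm whether the two case labels in the lemma as stated need to be swapped.
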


%%%%%%%%%%%%%%%%%%%%
%%%%%%%%%%%%%%%%%%%%

\begin{proof}
 Define $\l_r (k) = \mbox{Re} \l_+ (k)$.
 It is clear that $\l_r$ is smooth in $k \in \r$.
 It is easy to obtain $\l_r (0) = \l_r' (0) = 0$,
 and $\l_r'' (0) = - V' (\rho_*^{-1}) ( 1 - 2 \tau V' (\rho_*^{-1})) / \rho_*^2$.
 Hence, $1 - 2 \tau V' (\rho_*^{-1})$ determines the sign of $\l_r'' (0)$
 unless it is not equal to $0$.
 If $1 = 2 \tau V' (\rho_*^{-1})$,
 direct calculations imply $\l_r'' (0) = \l_r^{(3)} (0) = 0$ 
 and $\l_r^{(4)} (0) = - 24 V' (\rho_*^{-1}) \k (\rho_*) \tau / \rho_*^2 < 0$.
 Moreover, we easily calculate 
 \begin{equation}\label{eq2_lim}
  \lim_{k \to \pm \infty} \l_r (k) 
   = - \dfrac{V' (\rho_*^{-1})}{2 \tau \rho_*^2 \k (\rho_*)} < 0.
 \end{equation}

 Next we assume that there exist $k \neq 0$ and $a \in \r$ such that 
 $\l_+ (k) = i (a - k v_*)$, and calculate $\l_r' (k)$.
 Since 
 \[
 \sqrt{z (k)} = 2 i a + \dfrac{1}{\tau} + \k (\rho_*) k^2,
 \]
 we have 
 \[ 
 a^2 = \dfrac{k^2}{2 \tau \rho_*^2} V' (\rho_*^{-1}),
 \quad a \left( \dfrac{1}{\tau} + \k (\rho_*) k^2 \right)
 = \dfrac{k}{\tau \rho_*} V' (\rho_*^{-1}),
 \]
 from which we have 
 \[
 V' (\rho_*^{-1}) = \dfrac{\tau}{2} 
 \left( \dfrac{1}{\tau} + \k (\rho_*) k^2 \right)^2.
 \]
 Then we find
 \[
 a = \dfrac{k}{2 \rho_*} \left( \dfrac{1}{\tau} + \k (\rho_*) k^2 \right),
 \quad \sqrt{z (k)} 
 = \left( \dfrac{i k}{\rho_*} + 1 \right)
 \left( \dfrac{1}{\tau} + \k (\rho_*) k^2 \right).
 \]
 Direct calculations give us 
 \[
 \begin{aligned}
  z' (k) 
  & = 4 \k (\rho_*) k \left( \dfrac{1}{\tau} + \k (\rho_*) k^2 \right)
  + \dfrac{4 i}{\tau \rho_*} 
  \left( \dfrac{i k }{\rho_*} + 1 \right) V' (\rho_*^{-1}) \\
  & = \left( \dfrac{1}{\tau} + \k (\rho_*) k^2 \right)
  \left( 4 \k (\rho_*) k 
  - \dfrac{2 k}{\rho_*^2} \left( \dfrac{1}{\tau} + \k (\rho_*) k^2 \right) \right)
  + \dfrac{2 i}{\rho_*} \left( \dfrac{1}{\tau} + \k (\rho_*) k^2 \right)^2
 \end{aligned}
 \]
 and 
 \[
\begin{aligned}
 & \l_+' (k) + i v_* + \k (\rho_*) k
 = \dfrac{z' (k)}{4 \sqrt{z (k)}}
 = \dfrac{2 \k (\rho_*) k 
 - \dfrac{k}{\rho_*^2} \left( \dfrac{1}{\tau} + \k (\rho_*) k^2 \right) 
 + \dfrac{i}{\rho_*} \left( \dfrac{1}{\tau} + \k (\rho_*) k^2 \right)}
 {2 \left( \dfrac{i k}{\rho_*} + 1 \right)}.
\end{aligned}
 \]
 Picking up the real part of the above equality, we obtain
 \begin{equation}\label{eq_dmu_at_mu0}
  \l_r' (k) = - \dfrac{k^3 \k (\rho_*)}{k^2 + \rho_*^2},
 \end{equation}
 which shows that $\l_r' (k) < 0$ if $k > 0$,
 while $\l_r' (k) > 0$ if $k < 0$.

 Combining the facts above, we show Lemma~\ref{lemma_sign_mur}.
 Consider the case that $1 \le 2 \tau V' (\rho_*^{-1})$
 and assume that there exists $k > 0$ such that $\l_r (k) > 0$.
 Then there must be some $\tilde k > 0$ such that 
 $\l_r (\tilde k) = 0$ and $\l_r' (\tilde k) \ge 0$, which contradicts
 the sign of $\l_r' (k)$.
 On the other hand, if $1 > 2 \tau V' (\rho_*^{-1})$,
 then $\l_r'' (0) > 0$, (\ref{eq2_lim}) and (\ref{eq_dmu_at_mu0}) imply 
 the statement of Lemma~\ref{lemma_sign_mur} by the same argument as above.
\end{proof}

%%%%%%%%%%%%%%%%%%%%
%%%%%%%%%%%%%%%%%%%%

\subsection{Properties of $f$ and flows near equilibria}

We first summarize the sign and zeros of $f$.

%%%%%%%%%%%%%%%%%%%%
%%%%%%%%%%%%%%%%%%%%

\begin{lemma}\label{lem:prf}
 The following statements hold.
 \begin{itemize}
  \item[(i)]
	    If $(K,c) \in \mathcal{D}_1$, then $f$ has three zeros 
	    $u_0 = u_0 (K, c)$, $u_1 = u_1 (K, c)$, $u_2 = u_2(K, c)$ 
	    with $0 < u_1 < u_0 < u_2$ and satisfies
	    \begin{equation}\label{fbs}
	     \left\{
	      \begin{aligned}
	       & f(u) > 0 \ \mbox{ for } \ u \in (u_1, u_0) \cup (u_2, \infty), \\
	       & f(u) < 0 \ \mbox{ for } \ u \in (0, u_1) \cup (u_0,u_2), \\
	       & f'(u_1) > 0, \ f'(u_2)>0, \ f'(u_0) < 0.
	      \end{aligned}
	     \right.
	    \end{equation}
  \item[(ii)]
	     If $(K,c) \in \mathcal{D}_2$, then $f$ has two zeros 
	     $u_0 = u_0 (K, c)$, $u_2 = u_2 (K, c)$ with $0 < u_0 < u_2$ 
	     and satisfies
	     \[
	     \left\{
	     \begin{aligned}
	      & f(u) > 0 \ \mbox{ for } \ u \in (0,u_0) \cup (u_2,\infty), \\
	      & f(u) < 0 \ \mbox{ for } \ u \in (u_0,u_2), \\
	      & f'(u_2) > 0, \ f'(u_0) < 0.
	     \end{aligned}
	     \right.
	     \]
	     Moreover,
	     \[
	     \begin{aligned}
	      & f(0) = 0, \ f'(0)>0 \ \mbox{ if } 
	      \ K_0<K<K_1 \ \mbox{ and } \ c=c_0, \\
	      & f (0) > 0 \ \mbox{ if } \ 0<K<K_1 \ \mbox{ and } \ c_m<c<c_0.
	     \end{aligned}
	     \]
  \item[(iii)]
	     $u_0$, $u_1$ and $u_2$ are of class $C^1$ and satisfy
	     \begin{equation}\label{luacm}
	      u_0 (K,c), \ u_1 (K,c) \to u_M (K)
	       \ \mbox{ as } \ c \to c_M (K),
	     \end{equation}
	     \begin{equation}\label{lubcm}
	      u_0 (K, c), \ u_2 (K, c) \to u_m (K)
	      \ \mbox{ as } \ c \to c_m (K).
	     \end{equation}
 \end{itemize}
\end{lemma}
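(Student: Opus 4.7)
The lemma is essentially an exercise in reading off level sets of the auxiliary function $g(u) := Ku - V(u)$, since $f = (g - c)/K$ and $f' = 1 - V'/K$. My first step is to record, from assumption (A1), the monotonicity profile of $g$: because $V'$ rises strictly from $K_0$ to $K_M$ on $(0, U_M)$ and then falls strictly back to $0$ on $(U_M, \infty)$, the equation $g'(u) = K - V'(u) = 0$ has two roots $u_M(K) < U_M < u_m(K)$ when $K \in (K_0, K_M)$, and a single root $u_m(K)$ when $K \in (0, K_0]$. Together with $g(0) = -V(0) = c_0$ and $g(u) \to \infty$ as $u \to \infty$ (since $g'(u) \to K > 0$), this yields either an up-down-up profile or a down-up profile of $g$, with extremal values $c_M = g(u_M(K))$ and $c_m = g(u_m(K))$.

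Parts (i) and (ii) then follow by intersecting the graph of $g$ with the horizontal line $y = c$ and reading off signs. For $(K,c) \in \mathcal{D}_1$ the inequality $c_1 = \max\{c_0, c_m\} < c < c_M$ forces exactly one transversal crossing on each of the three monotonic branches of $g$, giving $u_1 < u_M(K) < u_0 < u_m(K) < u_2$; the signs of $f'(u_i) = 1 - V'(u_i)/K$ follow by checking whether each $u_i$ lies in the interval $[u_M(K), u_m(K)]$, on which $V' > K$, or outside, on which $V' < K$. For $(K,c) \in \mathcal{D}_2$ I split into the two natural subcases. When $c = c_0$ and $K_0 < K < K_1$, the up-down-up profile still applies but now $u = 0$ itself is a zero of $f$ (since $g(0) = c_0$), so the first monotonic branch contributes no positive zero, while $f'(0) = 1 - K_0/K > 0$; the remaining two positive zeros arise as transversal crossings on the decreasing and final increasing branches. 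When $c_m < c < c_0$ and $0 < K < K_1$, the condition $c < c_0 = g(0)$ gives $f(0) = (c_0 - c)/K > 0$ and eliminates any positive zero on the first (now either increasing or decreasing) branch, leaving precisely the two crossings $u_0 < u_m(K) < u_2$, uniformly whether $K \leq K_0$ or $K_0 < K < K_1$.

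For part (iii), the implicit function theorem applied to $F(u, K, c) := Ku - V(u) - c$ yields $C^1$ regularity of each $u_i$ in $(K, c)$, because $F_u(u_i, K, c) = K f'(u_i) \neq 0$ by the sign statements proved in (i) and (ii) and because $V \in C^2((0, \infty))$ by (A1). The degenerate limits (\ref{luacm}) and (\ref{lubcm}) are the only place where this theorem does not apply directly; for these I would argue by compactness, using that $c_M - g$ (resp.\ $g - c_m$) is uniformly bounded below by a positive constant outside any prescribed $\delta$-neighborhood of $u_M(K)$ (resp.\ $u_m(K)$), so that as $c \to c_M^-$ (resp.\ $c \to c_m^+$) the pair of roots straddling the degenerate critical point must be trapped inside that neighborhood. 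The main obstacle is not analytic but organizational: the case split in (ii), combined with the two possible profiles of $g$ according to whether $K \leq K_0$ or $K > K_0$, requires careful bookkeeping to avoid missing or double-counting a branch, but no technique beyond elementary calculus is needed.
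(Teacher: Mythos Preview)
Your proposal is correct and is precisely the elementary argument one would expect here. The paper itself provides no proof of this lemma; it simply introduces it with the sentence ``We first summarize the sign and zeros of $f$'' and moves on, treating all three parts as immediate consequences of (A1) and the definitions of $c_0$, $c_m$, $c_M$, $\mathcal{D}_1$, $\mathcal{D}_2$. Your write-up via the auxiliary function $g(u)=Ku-V(u)$, the up--down--up versus down--up profile according to whether $K>K_0$ or $K\le K_0$, transversal level-set crossings for (i)--(ii), the implicit function theorem for regularity in (iii), and the squeeze/compactness argument for the degenerate limits \eqref{luacm}--\eqref{lubcm} is exactly the standard route and matches what the authors are implicitly relying on.
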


%%%%%%%%%%%%%%%%%%%%
%%%%%%%%%%%%%%%%%%%%

We next study the flows of (\ref{eq1_TV}) 
near the equilibria $(u_0,0)$, $(u_1,0)$ and $(u_2,0)$.
Let $F(u, w)$ and $J (u)$ be the two-dimensional vector field associated 
with (\ref{eq1_TV}) and the Jacobian matrix of $F$ at $(u, 0)$, respectively. 
They are explicitly given by 
\begin{equation}\label{def:FJ}
 F(u,w) \equiv 
  \begin{pmatrix}
   w \\
   g_1 (u) f (u) +g_2 (u) h (u, \mu) w
  \end{pmatrix}
  ,
  \quad J (u) \equiv 
  \begin{pmatrix}
   0 & 1 \\
   g_1 (u) f'(u) & g_2 (u) h (u, \mu)
  \end{pmatrix}
  .
\end{equation}
The eigenvalues $\l_\pm (u)$ of $J (u)$ are explicitly given by
\[
\l_\pm (u) = \frac{1}{2} 
\left( g_2 (u) h (u,\mu) \pm \sqrt{g_2 (u)^2 h (u,\mu)^2 
+ 4 g_1 (u) f' (u)} \right).
\]
It is elementary to verify the following lemma by direct calculation.

%%%%%%%%%%%%%%%%%%%%
%%%%%%%%%%%%%%%%%%%%

\begin{lemma}\label{lem:evus}
 The following hold.
 \begin{itemize}
  \item[(i)]
	    For $(K,c) \in \mathcal{D}_1$ 
	    (resp. $(K,c) \in \mathcal{D}_1 \cup \mathcal{D}_2$),
	    it holds that at $u = u_1$ (resp. $u = u_2$)
	    \[
	    \l_+ (u) > 0, \quad \l_- (u) < 0, 
	    \quad (\l_+)_\mu (u) > 0, 
	    \quad (\l_-)_\mu (u) > 0.
	    \]
  \item[(ii)]
	     Assume $(K,c) \in \mathcal{D}_1 \cup \mathcal{D}_2$.
	     If $\mu \neq - f'(u_0)$, then $\mbox{Re} \l_\pm (u_0)$ 
	     are either all positive or all negative,
	     while if $\mu = - f'(u_0)$, then $\l_\pm (u_0) = \pm i \omega_0$,
	     where $\omega_0 \equiv \sqrt{- g_1 (u_0) f'(u_0)} > 0$.
 \end{itemize}
\end{lemma}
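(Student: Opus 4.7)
The plan is to work directly from the closed-form expression for $\lambda_\pm(u)$ together with the sign information on $f'$ at each equilibrium supplied by Lemma~\ref{lem:prf}. Note first that (A2) and $u>0$ give $g_1(u)>0$ and $g_2(u)>0$, and that $h_\mu(u,\mu)=1$ since $h(u,\mu)=f'(u)+\mu$. Write the discriminant as $D(u)\equiv g_2(u)^2 h(u,\mu)^2 + 4 g_1(u) f'(u)$, so $\lambda_\pm(u)=\tfrac12\bigl(g_2(u)h(u,\mu) \pm \sqrt{D(u)}\bigr)$. The whole lemma reduces to tracking the sign of $D(u)$ and of $g_2(u)h(u,\mu)$ at the three equilibria.

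For part (i), Lemma~\ref{lem:prf} gives $f'(u_1)>0$ on $\mathcal{D}_1$ and $f'(u_2)>0$ on $\mathcal{D}_1\cup\mathcal{D}_2$. Hence at $u=u_j$ ($j=1,2$) we have $4g_1 f'>0$, so $D(u_j)>g_2(u_j)^2 h^2 \ge 0$, which forces $\sqrt{D(u_j)}>|g_2(u_j)h|$ and therefore $\lambda_+(u_j)>0>\lambda_-(u_j)$ (standard saddle structure). For the monotonicity in $\mu$, differentiate the eigenvalue formula to obtain
\[
(\lambda_\pm)_\mu(u_j) = \frac{g_2(u_j)}{2}\left(1 \pm \frac{g_2(u_j)\, h(u_j,\mu)}{\sqrt{D(u_j)}}\right).
\]
Since $\sqrt{D(u_j)}>|g_2(u_j) h|$ by the same strict inequality, the quantity in parentheses is strictly positive for both sign choices, and $g_2(u_j)>0$ finishes part (i).

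For part (ii), the crucial sign change is that $f'(u_0)<0$ on $\mathcal{D}_1\cup\mathcal{D}_2$. If $\mu=-f'(u_0)$, then $h(u_0,\mu)=0$, so $\lambda_\pm(u_0)=\pm\tfrac12\sqrt{4g_1(u_0)f'(u_0)}=\pm i\sqrt{-g_1(u_0)f'(u_0)}=\pm i\omega_0$, as claimed. If $\mu\neq -f'(u_0)$, then $h(u_0,\mu)\neq 0$, and I split on the sign of $D(u_0)$. When $D(u_0)\ge 0$, the eigenvalues are real with product $\lambda_+\lambda_- = -g_1(u_0)f'(u_0)>0$, so they share a sign; since $D(u_0)\le g_2(u_0)^2 h^2$ in this case, the sum $\lambda_++\lambda_-=g_2(u_0) h$ determines (and equals the sign of) both. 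When $D(u_0)<0$, $\lambda_\pm(u_0)$ form a complex-conjugate pair with common real part $\tfrac12 g_2(u_0) h(u_0,\mu)$, again nonzero and of the sign of $h(u_0,\mu)$. In either subcase, $\operatorname{Re}\lambda_\pm(u_0)$ share the nonzero sign of $g_2(u_0) h(u_0,\mu)$.

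The computation is essentially algebraic; the only point that needs care is the case split for part (ii), where one must verify that the potential "degenerate" scenario (one real eigenvalue vanishing) cannot occur when $\mu\neq -f'(u_0)$: this is ruled out because $\lambda_+\lambda_- = -g_1(u_0)f'(u_0)>0$, so neither real root can be zero, and the purely imaginary case $\operatorname{Re}\lambda_\pm=0$ within $D(u_0)<0$ forces $h(u_0,\mu)=0$, i.e.\ $\mu=-f'(u_0)$. No deeper tool is needed.
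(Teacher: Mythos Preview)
Your proof is correct and is exactly the direct calculation the paper has in mind; the paper itself omits the proof, stating only that ``it is elementary to verify the following lemma by direct calculation.'' Your case analysis and the differentiation yielding $(\lambda_\pm)_\mu(u_j)=\tfrac{g_2(u_j)}{2}\bigl(1\pm g_2(u_j)h/\sqrt{D(u_j)}\bigr)$ with $\sqrt{D(u_j)}>|g_2(u_j)h|$ are precisely what is needed.
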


%%%%%%%%%%%%%%%%%%%%
%%%%%%%%%%%%%%%%%%%%

\subsection{Useful lemmas}

We give some simple lemmas to be used throughout the subsequent sections.

%%%%%%%%%%%%%%%%%%%%
%%%%%%%%%%%%%%%%%%%%

\begin{lemma}\label{lem:Wpes0}
 Let $A = A(s)$ and $B = B(s)$ be continuous integrable functions 
 on a bounded interval $(s_1, s_2)$,
 and let $W \in C^1 ((s_1, s_2)) \cap C ([s_1,s_2))$ be a solution of 
 \[
 W' = A (s) W + B (s) \quad \mbox{in } (s_1, s_2). 
 \]
 Then $W$ can be extended continuously up to $s = s_2$.
 Moreover, if $B>0$ in $(s_1, s_2)$ and $W(s_1) \ge 0$,  
 then $W>0$ on $(s_1, s_2]$.
\end{lemma}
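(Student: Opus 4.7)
The plan is to reduce the linear equation to a pure integration via the integrating-factor method. Set $\Phi(s) \equiv \exp\bigl(\int_{s_1}^{s} A(t)\,dt\bigr)$. Since $A$ is integrable on the bounded interval $(s_1,s_2)$, the primitive $\int_{s_1}^{s}A$ extends continuously to $[s_1,s_2]$, so $\Phi$ is continuous on $[s_1,s_2]$ and bounded above and away from zero. Multiplying the ODE by $\Phi^{-1}$ and integrating from $s_1$ to $s \in (s_1, s_2)$ yields the variation-of-constants representation
\begin{equation*}
W(s) \;=\; \Phi(s)\, W(s_1) \;+\; \Phi(s) \int_{s_1}^{s} \Phi(t)^{-1} B(t)\,dt.
\end{equation*}

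For the first assertion, I would argue that each factor on the right extends continuously to $s=s_2$. The prefactor $\Phi$ is already continuous on $[s_1,s_2]$, and since $\Phi^{-1}$ is bounded and $B$ is integrable on $(s_1,s_2)$, the product $\Phi^{-1}B$ is integrable there, so $\int_{s_1}^{s}\Phi(t)^{-1}B(t)\,dt$ has a finite limit as $s\to s_2^-$. Consequently $W$ extends continuously to $[s_1,s_2]$, and the extended function satisfies the formula at $s=s_2$ as well.

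For the positivity claim, suppose $B>0$ in $(s_1,s_2)$ and $W(s_1)\ge 0$. Then $\Phi(s)W(s_1)\ge 0$ on $[s_1,s_2]$, while for any $s\in(s_1,s_2]$ the integrand $\Phi(t)^{-1}B(t)$ is strictly positive and continuous on $(s_1,s)$, so $\Phi(s)\int_{s_1}^{s}\Phi(t)^{-1}B(t)\,dt>0$. Adding the two contributions gives $W(s)>0$ on $(s_1,s_2]$, as required.

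I do not foresee a serious obstacle; the only point requiring care is that the hypotheses give only integrability (not boundedness) of $A$ and $B$. This is accommodated by invoking boundedness of $\Phi=\exp(\int A)$ rather than of $A$ itself, and by using integrability of $\Phi^{-1}B$ (rather than pointwise control) to justify the limit at $s_2$.
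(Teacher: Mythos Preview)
Your proof is correct and follows essentially the same route as the paper: both derive the variation-of-constants formula $W(s)=\exp\bigl(\int_{s_1}^s A\bigr)W(s_1)+\int_{s_1}^s\exp\bigl(\int_\tau^s A\bigr)B(\tau)\,d\tau$ and read off the continuous extension and strict positivity directly from it. You supply more detail on why integrability of $A$ and $B$ suffices, which the paper leaves implicit.
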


%%%%%%%%%%%%%%%%%%%%
%%%%%%%%%%%%%%%%%%%%

\begin{proof}
 Solving the differential equation above, we have
 \[
 W (s) = \exp \left( \int_{s_1}^s A(\tau)d\tau \right) W(s_1) 
 + \int_{s_1}^s \exp \left( \int_\tau^s A(\zeta)d\zeta \right) B(\tau) d \tau
 \]
 for $s \in [s_1,s_2)$.
 The lemma follows immediately from the above equality.
\end{proof}

%%%%%%%%%%%%%%%%%%%%
%%%%%%%%%%%%%%%%%%%%

\begin{lemma}\label{lem:Wpes}
 Let $A$ and $B$ be continuous functions on a bounded interval $[s_1, s_2]$. 
 Assume that $W \in C^1((s_1, s_2)) \cap C([s_1, s_2])$ satisfies
 \[
 \left( W^2\right)' =A(s) +B(s)W \quad \mbox{in } (s_1,s_2).
 \]
 Then there hold
 \[
 \begin{aligned}
  e^{-s_2}W(s_2)^2 -e^{-s_1}W(s_1)^2
  & \le \int_{s_1}^{s_2} e^{-s} \left( A(s) +\frac{1}{4}B(s)^2 \right) ds, \\
  e^{s_2}W(s_2)^2 -e^{s_1}W(s_1)^2
  & \ge \int_{s_1}^{s_2} e^{s} \left( A(s) -\frac{1}{4}B(s)^2 \right) ds.  
 \end{aligned}
 \]
\end{lemma}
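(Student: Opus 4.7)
The plan is to derive the two inequalities by converting the identity $(W^2)' = A(s) + B(s)W$ into differential inequalities for $W^2$ and then exploit the integrating factors $e^{\pm s}$. The essential tool is the elementary Young-type inequality
\[
\pm B(s) W \le \tfrac{1}{4} B(s)^2 + W^2,
\]
which holds for all $s \in (s_1, s_2)$. Applied to the given equation, this yields simultaneously
\[
(W^2)' - W^2 \le A(s) + \tfrac{1}{4} B(s)^2, \qquad (W^2)' + W^2 \ge A(s) - \tfrac{1}{4} B(s)^2,
\]
on $(s_1, s_2)$.

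The next step is to multiply the first inequality by the integrating factor $e^{-s}$ and the second by $e^{s}$, rewriting them in the divergence form
\[
\bigl(e^{-s} W^2\bigr)' \le e^{-s}\!\left(A(s) + \tfrac{1}{4}B(s)^2\right), \qquad \bigl(e^{s} W^2\bigr)' \ge e^{s}\!\left(A(s) - \tfrac{1}{4}B(s)^2\right).
\]
Integrating the first over $(s_1, s_2)$ produces the upper bound, and integrating the second gives the lower bound. These are exactly the inequalities stated in the lemma.

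The only delicate point is that $W$ is assumed $C^1$ only on the open interval $(s_1, s_2)$, so the displayed derivatives are not a priori defined at the endpoints. To handle this, I would first integrate the divergence-form inequalities over a subinterval $[s_1 + \delta, s_2 - \delta]$, where $W$ is classically differentiable, obtaining for instance
\[
e^{-(s_2 - \delta)} W(s_2 - \delta)^2 - e^{-(s_1 + \delta)} W(s_1 + \delta)^2 \le \int_{s_1 + \delta}^{s_2 - \delta} e^{-s}\!\left(A(s) + \tfrac{1}{4} B(s)^2\right) ds,
\]
and then let $\delta \to 0^+$. The boundary terms converge by the continuity of $W$ on $[s_1, s_2]$, while the right-hand integral converges by dominated convergence, since $A$ and $B$ are continuous, hence bounded, on $[s_1, s_2]$. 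An identical limiting argument handles the second inequality. Since the two key ingredients, Young's inequality and integrating factors, are standard, I do not anticipate any real obstacle beyond this routine limiting passage.
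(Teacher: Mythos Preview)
Your proof is correct and follows essentially the same approach as the paper: both apply the elementary inequality $|BW|\le \tfrac14 B^2+W^2$ to obtain the differential inequalities $(W^2)'\mp W^2 \lessgtr A\pm\tfrac14 B^2$, and then conclude via the integrating factors $e^{\mp s}$ (the paper phrases this last step as ``Gronwall's inequality''). Your added care with the endpoint limit is a welcome detail that the paper leaves implicit.
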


%%%%%%%%%%%%%%%%%%%%
%%%%%%%%%%%%%%%%%%%%

\begin{proof}
 By the differential equation above 
 and the inequality $|BW| \le B^2/4 +W^2$, we have
 \[
 \left( W^2\right)' -W^2 \le A(s) +\frac{1}{4}B(s)^2,
 \quad \left( W^2\right)' +W^2 \ge A(s) -\frac{1}{4} B(s)^2,
 \]
 which concludes the lemma by Gronwall's inequality.
\end{proof}

%%%%%%%%%%%%%%%%%%%%
%%%%%%%%%%%%%%%%%%%%

\section{Existence of traveling back and front solutions}\label{sec:TF}

The goals of this section are to find monotone traveling back and front solutions
and to prove Theorem~\ref{thm:HECO}.
In order to obtain desired heteroclinic orbits,
we consider stable and unstable manifolds emanating from $(u_1,0)$ and $(u_2,0)$.
Let $(K,c) \in \mathcal{D}_1$.
Lemma~\ref{lem:evus} shows that for each $j = 1, 2$, 
there is an orbit $\{ (u^{\rm s}_j (z), w^{\rm s}_j (z)) \ | \ z \in \r \}$ 
(resp. $\{ (u^{\rm u}_j (z),w^{\rm u}_j (z)) \ | \ z \in \r \}$)
which lies on the stable (resp. unstable) manifold of the equilibrium
$(u_j, 0)$ and satisfies
\[
\begin{aligned}
 & (u^{\rm s}_1 (z), w^{\rm s}_1 (z)) \in S_-, 
 & & (u^{\rm s}_2 (z), w^{\rm s}_2 (z)) \in S_+,
 & & (u^{\rm u}_1 (-z),w^{\rm u}_1 (-z)) \in S_+, 
 & & (u^{\rm u}_2 (-z),w^{\rm u}_2 (-z)) \in S_-
\end{aligned}
\]
for large $z$, where $S_- \equiv  \{ (u,w) \ | \ u_1 < u < u_2, \ w < 0 \}$ 
and $S_+ \equiv \{ (u,w) \ | \ u_1 < u < u_2, \ w > 0 \}$.
Hence we define $z^{\rm s}_j$ and $z^{\rm u}_j$ by 
\begin{equation}\label{zsud}
 \begin{aligned}
  & z^{\rm s}_1 \equiv 
  \inf \{ z_0 \ | \ \{(u^{\rm s}_1 (z), w^{\rm s}_1(z)) \ | \ z > z_0 \} 
  \subset S_- \} \in [-\infty,\infty), \\
  & z^{\rm u}_1 \equiv 
  \sup \{ z_0 \ | \  \{(u^{\rm u}_1 (z), w^{\rm u}_1(z)) \ | \ z < z_0 \} 
  \subset S_+ \} \in (-\infty,\infty], \\
  & z^{\rm s}_2 \equiv 
  \inf \{ z_0 \ | \  \{(u^{\rm s}_2 (z), w^{\rm s}_2(z)) \ | \ z > z_0 \} 
  \subset S_+ \} \in [-\infty,\infty), \\
  &z^{\rm u}_2 \equiv 
  \sup \{ z_0 \ | \  \{(u^{\rm u}_2 (z), w^{\rm u}_2 (z)) \ | \ z < z_0 \} 
  \subset S_- \} \in (-\infty,\infty].
 \end{aligned}
\end{equation}
It follows from Lemma~\ref{lem:prf} that 
\begin{equation}\label{obtep}
 \begin{aligned}
  & (u^{\rm s}_1(z^{\rm s}_1),w^{\rm s}_1(z^{\rm s}_1)) 
  \in \{ (u, 0) \ | \ u_0 \le u \le u_2 \} 
  \cup \{ (u_2, w) \ | \ w < 0 \}, \\
  & (u^{\rm u}_1(z^{\rm u}_1),w^{\rm u}_1(z^{\rm u}_1)) 
  \in \{ (u, 0) \ | \ u_0 \le u \le u_2 \} \cup \{ (u_2, w) \ | \ w > 0 \}, \\
  & (u^{\rm s}_2(z^{\rm s}_2),w^{\rm s}_2(z^{\rm s}_2)) 
  \in \{ (u, 0) \ | \ u_1 \le u \le u_0 \} \cup \{ (u_1, w) \ | \ w > 0 \}, \\
  & (u^{\rm u}_2(z^{\rm u}_2),w^{\rm u}_2(z^{\rm u}_2)) 
  \in \{ (u, 0) \ | \ u_1 \le u \le u_0 \} \cup \{ (u_1, w) \ | \ w < 0 \}.
 \end{aligned}
\end{equation}
As far as $u_z \neq 0$, we see from the inverse function theorem that 
each of orbits
$\{ (u^{\rm s}_j (z), w^{\rm s}_j (z)) \ | \ z > z^{\rm s}_j \}$
and $\{ (u^{\rm u}_j (z), w^{\rm u}_j (z) ) \ | \ z < z^{\rm u}_j \}$
is expressed as the graph of a function of $u$.
More precisely, there are $u^\pm_j \in \r$
and functions $w^\pm_j = w^\pm_j (u)$ such that 
\begin{equation}\label{eq3_Opmj}
 \begin{aligned}
  \mathcal{O}^{+}_1 
  & \equiv \{ (u^{\rm u}_1 (z), w^{\rm u}_1 (z)) \ | \ z > z^{\rm u}_1 \}
  = \{ (u,w_1^+(u)) \ | \ u_1 < u < u_1^+ \}, \\
  \mathcal{O}^{-}_1 
  & \equiv \{ (u^{\rm s}_1 (z), w^{\rm s}_1 (z)) \ | \ z < z^{\rm s}_1 \}
  = \{ (u, w_1^-(u)) \ | \ u_1 < u < u_1^- \}, \\
  \mathcal{O}^{+}_2 
  & \equiv \{ (u^{\rm s}_2 (z), w^{\rm s}_2 (z) ) \ | \ z < z^{\rm s}_2 \}
  = \{ (u,w_2^+(u)) \ | \ u_2^+ < u < u_2 \}, \\
  \mathcal{O}^{-}_2 
  & \equiv \{ (u^{\rm s}_2 (z), w^{\rm s}_2 (z) ) \ | \ z > z^{\rm u}_2 \}
  = \{ (u,w_2^-(u)) \ | \ u_2^- < u < u_2 \}.
 \end{aligned} 
\end{equation}
To emphasize the dependency of the parameters, 
we may write $u^\pm_j = u^\pm_j (K,c,\mu)$ and $w^\pm_j (u) = w^\pm_j (u; K,c,\mu)$.
In the case of $u^\pm_j \neq u_1, u_0, u_2$, we see that 
$u^\pm_j$ is $C^1$ with respect to $(K,c,\mu)$ by the implicit function theorem.
By \eqref{obtep}, we have $u_0 \le u_1^\pm \le u_2$ and $u_1 \le u_2^\pm \le u_0$.

We see from (\ref{eq1_TV}) that $w^\pm_j$ is a solution of the equation
\begin{equation}\label{eq:wueq}
 w_u = \frac{g_1 (u) f (u)}{w} + g_2 (u) h (u,\mu),
\end{equation}
which is also written as
\begin{equation}\label{eq:wueq2}
 \frac{1}{2} (w^2)_u = g_1 (u) f (u) + g_2 (u) h (u, \mu) w.
\end{equation}
By Lemma~\ref{lem:evus}, 
we infer that $w^\pm_j$ can be extended smoothly up to $u = u_j$ and 
\begin{equation}\label{eq:wuic}
 \begin{aligned}
  & w_1^\pm = 0, 
  \quad (w_1^+)_u = \l_+ (u_1),
  \quad (w_1^-)_u = \l_- (u_1)
  & & \mbox{at } u = u_1, \\
  & w_2^\pm = 0,
  \quad (w_2^+)_u = \l_- (u_2),
  \quad (w_2^-)_u = \l_+ (u_2)
  & & \mbox{at } u=u_2.
 \end{aligned}
\end{equation}
We also infer that $w^\pm_j$ can be extended continuously up to $u=u_j^\pm$ and
\begin{equation}\label{wzep}
\begin{aligned}
 & w^\pm_1=0 \ \mbox{ at } \ u=u_1^\pm \ \mbox{ if } \ u_0 \le u_1^\pm<u_2, \\
 & w^\pm_2=0 \ \mbox{ at } \ u=u_2^\pm \ \mbox{ if } \ u_1<u_2^\pm \le u_0. 
\end{aligned}
\end{equation}
Moreover, $w^\pm_j$ are continuously differentiable with respect to $(K,\mu,c)$.
Also, $u^\pm_2$ and $w^\pm_2$ are defined 
for $(K, c) \in \mathcal{D}_2$ by setting $u_1 = 0$.

We show the monotonicity of $w_j^\pm$ with respect to $\mu$.

%%%%%%%%%%%%%%%%%%%%
%%%%%%%%%%%%%%%%%%%%

\begin{lemma}\label{lem:mwmu} 
 For $(K,c) \in \mathcal{D}_1$, there holds $(w^\pm_1)_\mu >0$ 
 in $u \in (u_1, u^\pm_1)$.
 Similarly, for $(K,c) \in \mathcal{D}_1 \cup \mathcal{D}_2$,
 there holds $(w^\pm_2)_\mu < 0$ in $u \in (u^\pm_2,u_2)$.
\end{lemma}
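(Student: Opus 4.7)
The plan is to treat all four branches uniformly by deriving a single linear first-order ODE for $W \equiv (w_j^\pm)_\mu$, and then using an integrating-factor identity that handles the singular coefficient at the saddle $(u_j, 0)$. Differentiating \eqref{eq:wueq2} with respect to $\mu$ at fixed $u$, using $h_\mu \equiv 1$ together with the identity $g_2 h - w_u = -g_1 f/w$ (a rewriting of \eqref{eq:wueq}), a short computation gives
\[
W_u + \frac{g_1(u) f(u)}{w_j^\pm(u)^2}\, W \;=\; g_2(u)
\]
on the open interval where $w_j^\pm \neq 0$.

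To extract the sign of $W$ near the equilibrium, I would use smooth dependence of the local stable/unstable manifolds on $\mu$, together with \eqref{eq:wuic}, to obtain the Taylor expansion $w_j^\pm(u;\mu) = \l(u_j)(u - u_j) + O((u - u_j)^2)$ uniformly for $\mu$ in a neighborhood, where $\l$ is the relevant eigenvalue ($\l_+$ for $w_1^+$ and $w_2^-$; $\l_-$ for $w_1^-$ and $w_2^+$). Differentiating in $\mu$ gives $W(u) = \l_\mu(u_j)(u - u_j) + O((u - u_j)^2)$, and Lemma~\ref{lem:evus}(i) guarantees $\l_\mu(u_j) > 0$ in every case. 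Hence $W$ has the sign of $u - u_j$ near $u_j$, which already matches the signs claimed in the lemma.

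With the integrating factor
\[
I(u) \equiv \exp\!\Big( \int_{u_*}^u \frac{g_1(s) f(s)}{w_j^\pm(s)^2}\, ds \Big),
\]
the ODE becomes the exact equation $(IW)_u = Ig_2$. Near $u_j$ the integrand is asymptotic to $\alpha/(u - u_j)$ with $\alpha \equiv g_1(u_j) f'(u_j)/\l(u_j)^2 > 0$, so $I(u) \sim C|u - u_j|^\alpha$; combined with the linear vanishing of $W$ from the previous step, this forces $I(u)W(u) \to 0$ as $u \to u_j$, while $Ig_2$ remains integrable at $u_j$. Integrating $(IW)_u = Ig_2$ from $u_j$ to any interior point $u$ and using $I, g_2 > 0$ then yields $I(u) W(u) = \int_{u_1}^u I g_2\, ds > 0$ on the $w_1^\pm$-branches and $-I(u) W(u) = \int_u^{u_2} I g_2\, ds > 0$ on the $w_2^\pm$-branches, from which the strict inequalities on $W$ follow.

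The main obstacle is precisely the non-integrability of $g_1 f/w^2$ at the equilibrium: the $1/(u - u_j)$ singularity prevents a direct appeal to Lemma~\ref{lem:Wpes0} with $u_j$ as an endpoint. The integrating-factor step circumvents this by matching the algebraic blow-up of $I$ against the linear vanishing of $W$, and the convergence $IW \to 0$ depends crucially on $\alpha > 0$, which is where the eigenvalue monotonicity from Lemma~\ref{lem:evus}(i) enters decisively.
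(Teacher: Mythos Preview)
Your argument is correct and shares the paper's core idea: derive the linear ODE $W_u = -\dfrac{g_1 f}{w^2}W + g_2$ for $W = (w_j^\pm)_\mu$, read off the sign of $W$ near $u_j$ from $(\lambda_\pm)_\mu(u_j)>0$ in Lemma~\ref{lem:evus}(i), and propagate that sign via an integrating factor. The difference is in how the singular endpoint $u_j$ is handled. You treat it head-on, computing the asymptotics $I(u)\sim C|u-u_j|^\alpha$ and $W(u)\sim\lambda_\mu(u_j)(u-u_j)$ to show $IW\to 0$ at $u_j$ and $Ig_2$ integrable there, then integrate from $u_j$. The paper sidesteps the singularity entirely: once $W(u_1)=0$ and $W_u(u_1)=(\lambda_+)_\mu(u_1)>0$ give $W>0$ on some $(u_1,\hat u]$, it simply applies Lemma~\ref{lem:Wpes0} on intervals $[\hat u,u']\subset(u_1,u_1^+)$, where the coefficient is bounded. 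So the ``main obstacle'' you identify is not actually an obstacle---there is no need to take $u_j$ as an endpoint of the integrating-factor step. Your route is sound but does more work than necessary; one small mis-attribution: the positivity $\alpha=g_1(u_j)f'(u_j)/\lambda(u_j)^2>0$ that you use for $IW\to 0$ comes from $f'(u_j)>0$, not from the eigenvalue monotonicity in Lemma~\ref{lem:evus}(i); that monotonicity is what fixes the \emph{sign} of $W$ near $u_j$.
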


%%%%%%%%%%%%%%%%%%%%
%%%%%%%%%%%%%%%%%%%%

\begin{proof}
 We put $W\equiv (w^+_1)_\mu$.
 By \eqref{eq:wuic} and Lemma~\ref{lem:evus},
 we have $W = 0$, $W_u > 0$ at $u=u_1$.
 Hence we can pick a point $\hat u \in (u_1,u^+_1)$ 
 such that $W>0$ on $(u_1, \hat u]$.
 Differentiating \eqref{eq:wueq} with respect to $\mu$,
 we see that $W$ satisfies
 \[
 W_u = - \frac{g_1(u) f(u)}{(w^+_1)^2} W + g_2 (u) 
 \]
 in $(u_1, u^+_1)$.
 By applying Lemma~\ref{lem:Wpes0}, $W > 0$ in $(\hat u, u^+_1)$.
 Thus we conclude that $(w^+_1)_\mu > 0$ in $(u_1, u^+_1)$.
 By a similar argument, we obtain $(w^-_1)_\mu > 0$ and $(w^\pm_2)_\mu < 0$.
 Therefore the lemma follows.
\end{proof}

%%%%%%%%%%%%%%%%%%%%
%%%%%%%%%%%%%%%%%%%%

\begin{lemma}\label{lem:bwmi}
 Let $(K,c) \in \mathcal{D}_1$ and $a \in (u_1,u_2)$.
 If $\mu$ (resp. $-\mu$) is large enough,
 then $u_1^+>a$ and $u_2^-<a$ (resp. $u_1^->a$ and $u_2^+<a$).
 Moreover, there hold
 \begin{equation}\label{vwpmi}
  \begin{aligned}
   & w^\pm_1(a;K,c,\mu) \to \pm \infty 
   \ \mbox{ as } \ \mu \to \pm \infty, \\
   & w^\pm_2(a;K,c,\mu) \to \pm \infty
   \ \mbox{ as } \ \mu \to \mp \infty. 
  \end{aligned}
 \end{equation}
\end{lemma}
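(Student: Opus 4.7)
The plan is to analyze the ODE
\[ (w_1^+)_u = \frac{g_1(u)\,f(u)}{w_1^+} + g_2(u)\bigl(f'(u)+\mu\bigr), \]
a restatement of \eqref{eq:wueq}, and exploit that for $|\mu|$ large the term $g_2(u)\bigl(f'(u)+\mu\bigr)$ dominates. I will carry out the case $w_1^+$ as $\mu\to +\infty$ in detail; the other three orbits are parallel after sign adjustments. The argument splits into a startup phase near the equilibrium $u_1$ and a continuation phase up to $u=a$.

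For the startup, I fix $\delta>0$ small enough that $u_1+\delta<\min\{u_0,a\}$, $g_2(u)\ge g_2(u_1)/2$, and $f'(u)\ge f'(u_1)/2$ on $[u_1,u_1+\delta]$. By Lemma~\ref{lem:prf} and \eqref{obtep}, $u_1^+\ge u_0 > u_1+\delta$, so $w_1^+$ is defined and strictly positive on $(u_1,u_1+\delta]$, and moreover $f\ge 0$ there. Nonnegativity of $g_1 f/w_1^+$ then yields $(w_1^+)_u\ge g_2(u)\bigl(f'(u)+\mu\bigr)\ge c_0\mu$ for some $c_0=c_0(K,c)>0$ and $\mu$ large, hence $w_1^+(u_1+\delta)\ge c_0\delta\mu$ by integration. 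For the continuation on $[u_1+\delta,a]$, set $C:=\sup_{[u_1,a]}|g_1 f|$ and $m:=\inf_{[u_1,a]}g_2>0$; while $w_1^+\ge 1$ we have $|g_1 f/w_1^+|\le C$, so $(w_1^+)_u\ge (m/2)\mu$ for $\mu$ large. Since $w_1^+(u_1+\delta)\ge c_0\delta\mu\ge 1$ once $\mu$ is large, this lower bound is preserved throughout $[u_1+\delta,a]$: the function is strictly increasing there, so it cannot return to $0$, yielding $u_1^+>a$ and $w_1^+(a)\ge (m/2)\mu(a-u_1-\delta)\to +\infty$.

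The orbits $w_2^-$ (as $\mu\to+\infty$) and $w_1^-$, $w_2^+$ (as $\mu\to-\infty$) are treated by the same scheme, with the startup performed on a short interval adjacent to the relevant equilibrium and the integration carried out in the direction along which $u$ moves. In each case the sign of $f$ on that short interval agrees with the sign of $w_j^\pm$, so $g_1 f/w_j^\pm$ is nonnegative and can be dropped from the estimate, while the dominant term $g_2(f'+\mu)$ (large negative when $\mu\to-\infty$) drives $w_j^\pm$ to its signed infinity. I expect the main obstacle to lie in this startup analysis, since the ODE has a $1/w$ singularity exactly at the equilibrium where the orbit begins; the resolution is precisely the local sign agreement between $f$ and $w_j^\pm$ on the short interval next to the equilibrium, which lets the singular term be discarded with a favorable sign and leaves only the linear-in-$\mu$ driver $g_2(f'+\mu)$ to be integrated.
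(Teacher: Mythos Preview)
Your argument for $w_1^+$ is correct and essentially parallels the paper's two-stage scheme: first exploit the sign of $f$ near the equilibrium so that $g_1 f/w$ can be discarded, then let the term $g_2(f'+\mu)$ dominate. The paper carries its first stage all the way to $u_0$ (where $f$ changes sign) and then switches to the squared equation \eqref{eq:wueq2} on $[u_0,u_1^+)$, which sidesteps your threshold device ``while $w_1^+\ge 1$''; your short-interval plus threshold variant works just as well and is equally elementary.

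There is, however, a concrete slip in your sketch of the remaining three cases. Near $u_1$ one has $f>0$ but $w_1^-<0$; near $u_2$ one has $f<0$ but $w_2^+>0$. Hence for $w_1^-$ and $w_2^+$ the signs of $f$ and $w_j^\pm$ \emph{disagree}, and $g_1 f/w_j^\pm$ is nonpositive rather than nonnegative. Taken literally, your reasoning would then yield the wrong inequality (a lower bound on $(w_1^-)_u$, which does not force $w_1^-\to-\infty$). The fix is immediate: for these two orbits the desired estimate is an \emph{upper} bound on $(w_j^\pm)_u$---for $w_1^-$ because you integrate forward in $u$ and want $w_1^-$ very negative, for $w_2^+$ because you integrate backward from $u_2$ and want $w_2^+$ large---and dropping a nonpositive term indeed gives an upper bound. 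So the conclusion survives, but you should replace ``the sign of $f$ agrees with the sign of $w_j^\pm$'' by the correct statement that $g_1 f/w_j^\pm$ has, in each case, the sign that allows it to be discarded in the direction of the needed inequality.
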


%%%%%%%%%%%%%%%%%%%%
%%%%%%%%%%%%%%%%%%%%

\begin{proof}
 To estimate $w^+_1$,
 we first integrate \eqref{eq:wueq}.
 Then, by Lemma~\ref{lem:prf}, we have
 \[
 w^+_1 (u) \ge \int_{u_1}^u g_2 (s) h (s,\mu) ds 
 \]
 for $u \in [u_1,u_0]$.
 Since the right-hand side goes to $\infty$ as $\mu \to \infty$, we find
 \begin{equation}\label{vwpmi0}
  w^+_1(u;K,c,\mu) \to \infty 
 \end{equation}
 locally uniformly for $u \in (u_1,u_0]$ as $\mu \to \infty$.

 Next we integrate \eqref{eq:wueq2}.
 We then have
 \[
 \begin{aligned}
  w^+_1 (u)^2
  = w^+_1 (u_0)^2 + 2 \int_{u_0}^u g_1(s) f(s) ds
  + 2 \int_{u_0}^u g_2(s) h(s,\mu) w^+_1 (s) ds.
 \end{aligned}
 \]
 It follows from \eqref{vwpmi0} that the right-hand side goes to $\infty$
 uniformly for $u \in [u_0, u_1^+)$ as $\mu \to \infty$.
 From this and \eqref{wzep}, we conclude that $u^+_1=u_2$ for large $\mu$ and
 $w^+_1(u;K,c,\mu) \to \infty$ uniformly for $u \in [u_0,u_2]$ as $\mu \to \infty$.
 Thus the assertion for $w^+_1$ is proved.
 The others can be shown in a similar way.
\end{proof}

%%%%%%%%%%%%%%%%%%%%
%%%%%%%%%%%%%%%%%%%%

We are now in a position to prove Theorem~\ref{thm:HECO}.

%%%%%%%%%%%%%%%%%%%%
%%%%%%%%%%%%%%%%%%%%

\begin{proof}[Proof of Theorem~\ref{thm:HECO}] 
 We fix $a \in (u_0,u_2)$ and consider the behavior of
 $\phi (\mu)\equiv (w^+_1 -w^+_2)|_{u=a}$.
 From Lemma~\ref{lem:mwmu}, we deduce that $u^+_1$ 
 is nondecreasing with respect to $\mu$.
 Since $u^+_1 > a$ for large $\mu$ from Lemma~\ref{lem:bwmi}, 
 there is some $b \in [-\infty, \infty)$ such that 
 $\{ \mu \in \r \ | \ u^+_1>a \} = (b, \infty)$.
 Using Lemmas~\ref{lem:mwmu} and \ref{lem:bwmi} again, we infer that
 $\phi_\mu > 0$, $\lim_{\mu \to \infty} \phi (\mu)=\infty$,
 and 
 \[
 \lim_{\mu \to b} \phi (\mu) =
 \left\{ 
 \begin{aligned}
  & - w^+_2(a;K,c,b) < 0
  & & \mbox{if } b \neq -\infty, \\
  & -\infty
  & & \mbox{if } b=-\infty
 \end{aligned}
 \right.
 \]
 because $u^+_1 = a$ and $w^+_1 (a) = 0$ for $\mu = b$ when $b \neq -\infty$.
 Therefore there exists a unique zero $\mu_b$ of $\phi$.
 We have thus proved the assertion for (\ref{eq1_TV}) with \eqref{eq:bcmi}.
 Since the same argument is valid for (\ref{eq1_TV}) with \eqref{eq:bcmd}, 
 we conclude Theorem~\ref{thm:HECO}.
\end{proof}

%%%%%%%%%%%%%%%%%%%%
%%%%%%%%%%%%%%%%%%%%

\begin{remark}
 By the implicit function theorem, 
 $\mu_b$ and $\mu_f$ are of class $C^1$ with respect to 
 $(K, c) \in \mathcal{D}_1$.
\end{remark}

%%%%%%%%%%%%%%%%%%%%
%%%%%%%%%%%%%%%%%%%%

\section{Behavior of $\mu_b$ and $\mu_f$}\label{sec_mubmuf}

This section is devoted to the proof of the following proposition.
We examine the behaviors of $\mu_b$ and $\mu_f$ 
as $c$ runs from $c_1$ to $c_M$ 
and prove the existence of a heteroclinic cycle in (\ref{eq1_TV}).

%%%%%%%%%%%%%%%%%%%%
%%%%%%%%%%%%%%%%%%%%

\begin{proposition}\label{prop:BMBF}
 Assume (\ref{eq1_cs}).
 For $K \in (K_0, K_M)$, there is a unique number
 $c_* = c_*(K) \in (c_1, c_M)$ such that
 \begin{equation}\label{eq4_muscs}
  \begin{aligned}
   & \mu_b < \mu_f \ \mbox{ if } \ c_1 < c < c_*, \\
   & \mu_b > \mu_f \ \mbox{ if } \ c_* < c < c_M, \\
   & \mu_b = \mu_f \ \mbox{ if } \ c = c_*.    
  \end{aligned}
 \end{equation}
\end{proposition}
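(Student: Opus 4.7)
The plan is to fix $K \in (K_0, K_M)$, define $\Phi(c) := \mu_b(K,c) - \mu_f(K,c)$ on $c \in (c_1(K), c_M(K))$, and prove that $\Phi$ is continuous, strictly monotone in $c$, and takes values of opposite signs near the two endpoints. Existence and uniqueness of $c_*$ then follow from the intermediate value theorem, and the three cases in~(\ref{eq4_muscs}) read off directly from the sign of $\Phi$. Regularity is immediate: both $\mu_b, \mu_f$ are $C^1$ in $c$ by the remark following Theorem~\ref{thm:HECO}, so $\Phi \in C^1((c_1, c_M))$.

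To establish strict monotonicity, I would fix $a \in (u_0(K,c), u_2(K,c))$ (restricting to a small interval of $c$'s and patching, or using a locally varying $a$) and differentiate the defining identities $w_1^+(a;K,c,\mu_b) = w_2^+(a;K,c,\mu_b)$ and $w_1^-(a;K,c,\mu_f) = w_2^-(a;K,c,\mu_f)$ with respect to $c$, obtaining
\[
\mu_b'(c) = \frac{(w_2^+)_c - (w_1^+)_c}{(w_1^+)_\mu - (w_2^+)_\mu}\bigg|_{u=a}, \qquad \mu_f'(c) = \frac{(w_2^-)_c - (w_1^-)_c}{(w_1^-)_\mu - (w_2^-)_\mu}\bigg|_{u=a}.
\]
The denominators are positive by Lemma~\ref{lem:mwmu}. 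The signs of the $c$-derivatives in the numerators are furnished by the companion monotonicity result Lemma~\ref{lem:mwmc}, and combining these signs should show that $\Phi'(c) = \mu_b'(c) - \mu_f'(c)$ has a single (expected positive) sign throughout $(c_1, c_M)$.

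For the boundary behavior I would use the Melnikov-type energy identity obtained by integrating~\eqref{eq:wueq2} along each heteroclinic orbit,
\[
-\int_{u_1}^{u_2} g_1(u)f(u)\,du = \int_{u_1}^{u_2} g_2(u)[f'(u)+\mu_b]\,w_1^+(u)\,du,
\]
together with the analogous identity for $\mu_f$ involving $-w_2^-$. As $c \to c_M^-$, Lemma~\ref{lem:prf}(iii) yields $u_1, u_0 \to u_M$, the positive lobe of $f$ collapses, and a quantitative version of the estimates in Lemma~\ref{lem:bwmi} combined with these identities should force $\Phi$ to take a definite sign at this endpoint. As $c \to c_1^+$ the analysis splits according to condition~(\ref{eq1_cs}): for $K \in (K_1, K_M)$ one has $c_1 = c_m$ and the symmetric collapse $u_0, u_2 \to u_m$ produces the opposite sign; for $K \in (K_0, K_1]$ one has $c_1 = c_0$ and the left equilibrium $u_1$ escapes to $u = 0$, at which stage hypothesis~(\ref{cd:nu}) is invoked to control the singular behavior of the coefficients.

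The main obstacle is precisely this last boundary case $c \to c_0$ with $K \in (K_0, K_1]$: as $u_1 \to 0$, the coefficient $g_1(u) = 1/(\tau\kappa(u^{-1}))$ degenerates in a way that depends on the growth of $\kappa$ at infinity, and~(\ref{cd:nu}) is tailored exactly so that the energy integral $\int_0^{u_0} g_1(u)f(u)\,du$ (or the weighted analogue controlling the difference $\mu_b - \mu_f$) diverges at the correct rate to pin the sign of $\Phi$ at the endpoint. By contrast, the saddle-node limits $c \to c_M^-$ and, when $K > K_1$, $c \to c_m^+$ should yield to a standard center-manifold expansion combined with the monotonicity already established, and represent routine—though lengthy—bookkeeping.
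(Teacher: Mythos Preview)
Your overall architecture matches the paper's: define $\Phi = \mu_b - \mu_f$, prove strict monotonicity in $c$, and establish opposite signs at the endpoints. Your derivation of $\mu_b'(c)$ and $\mu_f'(c)$ is exactly the content of the paper's Lemma~\ref{lem:mmuc} (the $c$-derivatives of $w_j^\pm$ you need are supplied by Lemma~\ref{lem:mwc}, not Lemma~\ref{lem:mwmc}), and in fact it yields $(\mu_b)_c>0$ and $(\mu_f)_c<0$ separately, so $\Phi'>0$ is immediate rather than something to be hoped for.

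The real work, and the gap in your proposal, is the endpoint analysis. Lemma~\ref{lem:bwmi} concerns $\mu \to \pm\infty$ for fixed $c$ and says nothing about $c \to c_M$ or $c \to c_1$. A center-manifold expansion at the saddle-node describes orbits only in a neighborhood of $(u_M,0)$; since the heteroclinic must land at the distant saddle $(u_2,0)$, a local expansion cannot by itself locate $\mu_b$. Your Melnikov identity is correct but couples $\mu_b$ to the unknown profile $w_b$; without an independent a~priori bound on $w_b$ as $c \to c_M$ it does not pin the sign of $\Phi$.

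The paper supplies exactly that missing bound. It fixes a comparison value $\mu^*$ via $\int_{u_M}^{u_2^*} g_2(u)\, h(u,\mu^*)\,du = 0$, assumes for contradiction that $\lim_{c\to c_M}\mu_b \le \mu^*$, and applies the Gronwall-type estimate of Lemma~\ref{lem:Wpes} to \eqref{eq:wueq2} to force $w_b(u_0)\to 0$ while $\sup w_b$ stays bounded. Integrating \eqref{eq:wueq} (not \eqref{eq:wueq2}) over $[u_0,u_2]$ then gives a contradiction: the left side tends to something nonpositive, whereas the right side $\int_{u_0}^{u_2} (-g_1 f)/w_b\,du$ stays bounded below by a positive constant. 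The case $c\to c_m$ for $K>K_1$ is symmetric with a comparison value $\bar\mu^*$. For $c\to c_0$ with $K\le K_1$ the comparison value is $\mu_0=-f'(0)$, and here \eqref{cd:nu} enters precisely to make $\int_{u_1}^{\d_0} g_1(u)f(u)\,du$ diverge as $u_1\to 0$, which is incompatible with the Gronwall bound on $w_b(\d_0)$ obtained from the other side. Your identification of the case split and of the role of \eqref{cd:nu} is correct; what is missing is this comparison-value-plus-Gronwall mechanism that converts the integral identity into a definite sign.
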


%%%%%%%%%%%%%%%%%%%%
%%%%%%%%%%%%%%%%%%%%

We begin by showing the monotonicity of $w^\pm_j$ with respect to $c$.

%%%%%%%%%%%%%%%%%%%%
%%%%%%%%%%%%%%%%%%%%

\begin{lemma}\label{lem:mwc}
 For $(K,c) \in \mathcal{D}_1$ and $\mu \in \mathbb{R}$, 
 there hold $\mp (w^\pm_1)_c >0$ in $u \in (u_1, u^\pm_1)$ and 
 $\pm (w^\pm_2)_c >0$ in $u \in (u^\pm_2, u_2)$.
\end{lemma}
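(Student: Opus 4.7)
The plan is to parallel the proof of Lemma~\ref{lem:mwmu}, now differentiating \eqref{eq:wueq} with respect to $c$ instead of $\mu$. I will treat $W \equiv (w^+_1)_c$ first; the other three cases will follow by the same scheme with sign changes. Since $f_c(u) = -1/K$ and $h(u,\mu) = f'(u) + \mu$ is independent of $c$, differentiation gives the linear inhomogeneous equation
\[
W_u = -\frac{g_1(u) f(u)}{(w^+_1)^2}\, W - \frac{g_1(u)}{K\, w^+_1}
\]
on $(u_1, u^+_1)$. Because $w^+_1 > 0$ on this interval, the inhomogeneity is strictly negative, which is what will force $W$ to stay negative.

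The main novelty compared to Lemma~\ref{lem:mwmu} is that the equilibrium $u_1 = u_1(c)$ itself moves with $c$, so $W$ does not vanish at the left endpoint. From $f(u_1;c)=0$ and $f_c=-1/K$ one has $(u_1)_c = 1/(K f'(u_1)) > 0$, and differentiating the identity $w^+_1(u_1(c);K,c,\mu)\equiv 0$ in $c$ yields the formal boundary value
\[
\lim_{u \to u_1^+} W(u) = -(u_1)_c \cdot \l_+(u_1) = -\frac{\l_+(u_1)}{K f'(u_1)} < 0.
\]
To justify this limit rigorously I would pass to the $z$-parametrization: by standard ODE theory the unstable orbit $(u^{\rm u}_1(z), w^{\rm u}_1(z))$ depends smoothly on $c$, and inverting $u = u^{\rm u}_1(z)$ (valid while $w>0$) transfers this smoothness to $w^+_1(u;c)$, giving an expansion $w^+_1(u;c) = \l_+(u_1(c))(u-u_1(c)) + O((u-u_1(c))^2)$ whose $c$-derivative has the displayed limit.

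Granted $W(\hat u) < 0$ for some $\hat u$ slightly larger than $u_1$, I apply Lemma~\ref{lem:Wpes0} to $\tilde W \equiv -W$ on $[\hat u, u^+_1]$: it satisfies $\tilde W_u = A(u)\tilde W + B(u)$ with $A(u) = -g_1(u) f(u)/(w^+_1)^2$ continuous and $B(u) = g_1(u)/(K w^+_1) > 0$, and $\tilde W(\hat u) > 0$, so the lemma gives $\tilde W > 0$ on $(\hat u, u^+_1]$; hence $(w^+_1)_c < 0$ on all of $(u_1, u^+_1)$. The three remaining cases are variants of this three-step argument: the sign of $w^\pm_j$ flips the sign of the inhomogeneity, while the sign of $\l_\pm$ at the relevant equilibrium and the direction of the $u$-parametrization (increasing from $u_1$, decreasing from $u_2$) produce matching sign flips in both the boundary value and the orientation in which Lemma~\ref{lem:Wpes0} is applied, so the required inequality follows in each case.

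The main obstacle is the rigorous continuous extension of $W$ up to the moving endpoint $u_j(c)$. Unlike in Lemma~\ref{lem:mwmu}, where $W(u_j)=0$ came for free from a fixed boundary condition and a positive initial slope followed from $(\l_+)_\mu > 0$, here the ODE for $W$ is genuinely singular at $u_j$ and the expected limit is a nonzero constant. Routing the argument through the $z$-parametrization and invoking smooth parameter dependence of the stable/unstable manifolds is the step that will require the most care.
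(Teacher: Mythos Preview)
Your proof is correct and essentially identical to the paper's: both differentiate \eqref{eq:wueq} in $c$, compute the boundary value at $u_1$ from $w^+_1(u_1(c);c)\equiv 0$ and $f(u_1(c);c)=0$, and then invoke Lemma~\ref{lem:Wpes0}. The paper is slightly more streamlined in that it sets $W\equiv -(w^+_1)_c$ from the outset (avoiding your intermediate $\tilde W$) and simply relies on the smooth extension of $w^\pm_j$ to $u=u_j$ and their $C^1$ dependence on $(K,c,\mu)$ already recorded before \eqref{eq:wuic}, so the ``main obstacle'' you flag is in fact already handled.
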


%%%%%%%%%%%%%%%%%%%%
%%%%%%%%%%%%%%%%%%%%

\begin{proof}
 We only estimate $W\equiv -(w^+_1)_c$.
 The other inequalities can be obtained in the same way.
 Differentiating the equalities $w^+_1(u_1)=0$ and $f(u_1)=0$ 
 with respect to $c$ gives $W (u_1) = \l_+ (u_1) / (K f'(u_1))$,
 where $\l_+ (u_1)$ was given in Lemma~\ref{lem:evus}.
 Since the right-hand side of this equality is positive,
 we infer that $W>0$ in a neighborhood of $u_1$. 
 Differentiating \eqref{eq:wueq} yields
 \[
 W_u =-\frac{g_1(u) f(u)}{(w^+_1)^2} W +\frac{g_1(u)}{Kw^+_1}. 
 \]
 Applying Lemma~\ref{lem:Wpes0},
 we conclude that $W>0$ in $(u_1,u^+_1)$.
 This completes the proof.
\end{proof}

%%%%%%%%%%%%%%%%%%%%
%%%%%%%%%%%%%%%%%%%%

Next we show the monotonicity of $\mu_b$ and $\mu_f$ with respect 
to $c \in (c_1,c_M)$.

%%%%%%%%%%%%%%%%%%%%
%%%%%%%%%%%%%%%%%%%%

\begin{lemma}\label{lem:mmuc}
 The inequalities $(\mu_b)_c>0$ and $(\mu_f)_c<0$ hold.
\end{lemma}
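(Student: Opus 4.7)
The plan is to obtain both inequalities by applying the implicit function theorem to the defining equations for $\mu_b$ and $\mu_f$, using the monotonicity results already established in Lemmas~\ref{lem:mwmu} and \ref{lem:mwc}. As in the proof of Theorem~\ref{thm:HECO}, fix $a \in (u_0,u_2)$ and set $\phi(K,c,\mu) \equiv w_1^+(a;K,c,\mu) - w_2^+(a;K,c,\mu)$. Then $\mu_b(K,c)$ is characterized by $\phi(K,c,\mu_b)=0$, and similarly, fixing $\tilde a \in (u_1,u_0)$, the quantity $\mu_f(K,c)$ is characterized by $\psi(K,c,\mu_f)=0$, where $\psi(K,c,\mu) \equiv w_1^-(\tilde a;K,c,\mu) - w_2^-(\tilde a;K,c,\mu)$.

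First I would verify that both $\phi$ and $\psi$ are $C^1$ in $(c,\mu)$ in a neighborhood of $(c,\mu_b)$ and $(c,\mu_f)$ respectively: this follows because $a$ lies strictly between the relevant endpoints $u_1^+,u_2^+$ (resp.\ $u_1^-,u_2^-$) at the intersection parameter, and both $w_j^\pm$ are smoothly differentiable with respect to $(K,c,\mu)$ away from the equilibria. The implicit function theorem then gives
\[
(\mu_b)_c = -\frac{\phi_c}{\phi_\mu}, \qquad (\mu_f)_c = -\frac{\psi_c}{\psi_\mu},
\]
so the task reduces to signing the four partial derivatives on the right.

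For the $\mu$-derivatives, Lemma~\ref{lem:mwmu} gives $(w_1^+)_\mu>0$ and $(w_2^+)_\mu<0$ on their respective domains, hence $\phi_\mu>0$ at $(c,\mu_b)$; likewise $(w_1^-)_\mu>0$ and $(w_2^-)_\mu<0$, so $\psi_\mu>0$ at $(c,\mu_f)$. For the $c$-derivatives, Lemma~\ref{lem:mwc} yields the four sign facts $(w_1^+)_c<0$, $(w_1^-)_c>0$, $(w_2^+)_c>0$, $(w_2^-)_c<0$. Therefore $\phi_c = (w_1^+)_c - (w_2^+)_c < 0$, giving $(\mu_b)_c > 0$, while $\psi_c = (w_1^-)_c - (w_2^-)_c > 0$, giving $(\mu_f)_c < 0$.

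I do not expect a genuine analytic obstacle here; the only subtle point is bookkeeping with signs and making sure $a$ (resp.\ $\tilde a$) is in the common domain of both orbit-graphs $w_1^\pm,w_2^\pm$ at the intersection parameter, so that $\phi$ and $\psi$ are indeed smooth there and the implicit function theorem applies. Once this is in hand the result follows purely from the monotonicity lemmas, with no further estimation required.
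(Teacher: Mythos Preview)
Your proposal is correct and follows essentially the same approach as the paper: the paper differentiates the defining relation $w_1^+(u;K,c,\mu_b)=w_2^+(u;K,c,\mu_b)$ with respect to $c$ and solves for $(\mu_b)_c$, which is exactly your implicit-function-theorem computation $(\mu_b)_c=-\phi_c/\phi_\mu$, and the signs are read off from Lemmas~\ref{lem:mwmu} and~\ref{lem:mwc} in both cases.
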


%%%%%%%%%%%%%%%%%%%%
%%%%%%%%%%%%%%%%%%%%

\begin{proof}
 We recall that the equality $w^+_1(u;K,c,\mu_b)=w^+_2(u;K,c,\mu_b)$ holds.
 Differentiating this with respect to $c$, we have
 \[
 (w^+_1)_c + (w^+_1)_\mu (\mu_b)_c = (w^+_2)_c + (w^+_2)_\mu (\mu_b)_c.
 \]
 This with Lemmas~\ref{lem:mwmu} and \ref{lem:mwc} yields
 \[
 (\mu_b)_c = \frac{(w^+_2)_c - (w^+_1)_c}{(w^+_1)_\mu - (w^+_2)_\mu} > 0.
 \]
 In a similar way, we have
 \[
 (\mu_f)_c = - \frac{(w^-_1)_c - (w^-_2)_c}{(w^-_1)_\mu -(w^-_2)_\mu} < 0.
 \]
 Therefore the lemma follows.
\end{proof}

%%%%%%%%%%%%%%%%%%%%
%%%%%%%%%%%%%%%%%%%%

Let us prove Proposition~\ref{prop:BMBF}.

%%%%%%%%%%%%%%%%%%%%
%%%%%%%%%%%%%%%%%%%%

\begin{proof}[Proof of Proposition~\ref{prop:BMBF}]
 For simplicity of notation, we ignore the dependence on $K$ and 
 write $\mu_b (c)$ instead of $\mu_b (K,c)$, for instance.
 By Lemma~\ref{lem:mmuc}, it suffices to show that
 \begin{equation}\label{ie:mbmf}
  \lim_{c \to c_M} \mu_b(c) >\lim_{c \to c_M} \mu_f(c),
   \quad \lim_{c \to c_1} \mu_b(c) <\lim_{c \to c_1} \mu_f(c).
 \end{equation}

 To derive the former inequality of \eqref{ie:mbmf}, we prove that
 \begin{equation}\label{es:mbfb}
  \lim_{c \to c_M} \mu_b (c) > \mu^*,
 \end{equation}
 where $\mu^*$ is determined by the relation
 \begin{equation}\label{def:msua}
  \int_{u_M}^{u_2^*} g_2 (u) h (u, \mu^*) du = 0
 \end{equation}
 for $u_2^* \equiv \lim_{c \to c_M} u_2(c) > 0$.
 To obtain a contradiction, suppose that \eqref{es:mbfb} is false.
 Then Lemma~\ref{lem:mmuc} yields
 $\mu_b (c) < \mu^*$ for all $c \in (c_1,c_M)$.
 Set $w_b (u) \equiv w^+_1 (u; c, \mu_b (c)) = w^+_2 (u; c, \mu_b (c))$.
 Since $w_b$ satisfies \eqref{eq:wueq2} for $\mu=\mu_b$,
 we can apply Lemma~\ref{lem:Wpes}
 with $W=w_b$, $A=2g_1 f - 2 (\mu^* -\mu_b) g_2 w_b$, 
 $B = 2 g_2 h (\cdot,\mu^*)$, $s_1=u_1$ and $s_2=u \in [u_1,u_2]$ to obtain
 \begin{equation}\label{es:wbub}
  \begin{aligned}
   w_b (u)^2
   & \le \int_{u_1}^u e^{u-s} (2 g_1 (s) f (s) + g_2 (s)^2 h (s, \mu^*)^2) ds
   - 2 (\mu^* -\mu_b) \int_{u_1}^u e^{u-s} g_2 (s) w_b (s) ds \\
   & \le \int_{u_1}^u e^{u-s} ( 2g_1 (s) f (s) + g_2 (s)^2 h (s,\mu^*)^2 ) ds.
  \end{aligned}
 \end{equation}
 If $u=u_0$, we find from \eqref{luacm} 
 that the right-hand side approaches $0$ as $c \to c_M$,
 which implies that $\lim_{c \to c_M} w_b (u_0 (c); c, \mu_b (c)) = 0$.
 Moreover, it follows from \eqref{es:wbub} that
 \begin{equation}\label{wbls}
  \limsup_{c \to c_M} \max_{u \in [u_1,u_2]} w_b (u) < \infty.
 \end{equation}
 We now integrate \eqref{eq:wueq} over $[u_0,u_2]$ to obtain
 \[
 w_b(u_0) +\int_{u_0}^{u_2} g_2(u)h(u,\mu_b) du
 = \int_{u_0}^{u_2} \frac{-g_1(u)f(u)}{w_b(u)}du. 
 \]
 By \eqref{def:msua} and the assumption $\lim_{c \to c_M}\mu_b(c) \le \mu^*$,
 we see that the left-hand side converges to some nonpositive number as $c \to c_M$.
 On the other hand, (\ref{wbls}) implies that the right-hand side is bounded below 
 by some positive constant for any $c$ close to $c_M$.
 This is a contradiction, and therefore \eqref{es:mbfb} holds.
 The inequality $\lim_{c \to c_M}\mu_f(c)<\mu^*$ can also be verified 
 in a similar way.
 We have thus shown the former inequality of \eqref{ie:mbmf}.

 Let us prove the latter inequality of \eqref{ie:mbmf}.
 In the case $K_1 < K < K_M$, we have $c_1 = c_m$ 
 and put $u_1^* \equiv \lim_{c \to c_m} u_1 (c) > 0$.
 Hence we can apply an argument similar to the proof of \eqref{es:mbfb} to obtain
 \[
 \lim_{c \to c_m} \mu_b(c) < \bar \mu^* < \lim_{c \to c_m} \mu_f(c),
 \]
 where $\bar \mu^*$ is determined by the relation
 \[
 \int_{u_1^*}^{u_m} g_2 (u) h (u, \bar \mu^*) du = 0.
 \]

 Next we assume (\ref{cd:nu}) and $K_0 < K \le K_1$.
 In this case, we have $c_1 = c_0$. 
 We prove that 
 \begin{equation}\label{mlc11}
  \lim_{c \to c_0} \mu_b (c) \le \mu_0 < \lim_{c \to c_0} \mu_f (c),
 \end{equation}
 where $\mu_0 \equiv - f' (0)$.
 Note that $\mu_0$ is given by $h (0, \mu_0) = 0$.
 Moreover, one can easily check that
 \[
 \lim_{c \to c_0} u_1 (c) = 0,
 \quad \lim_{c \to c_0} u_2 (c) > 0,
 \quad f'(u) > 0 \ \mbox{ for } \ u \in [0,u_M). 
 \]
 We prove the first inequality of (\ref{mlc11}) by contradiction.
 Obviously, there is a positive constant $\d_0$ such that 
 $h (u, \mu_b (c)) > 0$ for all $u \in [0, \d_0]$ and $c \in (c_0,c_M)$.
 By integrating \eqref{eq:wueq2} over $[u_1, \d_0]$,
 we deduce that
 \begin{equation}\label{mbles1}
  \frac{1}{2} w_b(\d_0)^2 \ge \int_{u_1}^{\d_0} g_1 (u) f (u) du.
 \end{equation}
 To estimate the left-hand side,
 we apply Lemma~\ref{lem:Wpes}
 with $W = w_b$, $A = 2 g_1 f - 2 (\mu_0 -\mu_b) g_2 w_b$, 
 $B = 2 g_2 h (\cdot, \mu_0)$, $s_1 = \d_0$ and $s_2 = u_2$.
 The result is
 \[
 \begin{aligned}
  w_b (\d_0)^2 
  \le \int_{\d_0}^{u_2} e^{s-\d_0}
  ( -2 g_1 (u) f (u) + g_2 (u)^2 h (u,\mu_0)^2 ) du
  + 2 (\mu_0 -\mu_b) \int_{\d_0}^{u_2} e^{s-\d_0} g_2 (u) w_b (u) du,
 \end{aligned}
 \]
 which implies that $w_b (\d_0)^2$ is bounded by some constant independent of $c$.
 On the other hand, the right-hand side of \eqref{mbles1} is estimated as
 \[
 \int_{u_1}^{\d_0} g_1(u)f(u) du
 \ge m_0 \int_{u_1}^{\d_0} (u-u_1) g_1(u) du
 \ge \frac{m_0}{2} \int_{2u_1}^{\d_0} u g_1(u) du, 
 \]
 where we have applied $f (u) \ge m_0 (u - u_1)$
 in all $u \in [u_1, \d_1] $ and $c \in [c_0, c_0 + \d_2]$
 for some positive constants $m_0, \d_1, \d_2$.
 By the assumption \eqref{cd:nu}, we conclude that the right-hand side 
 of \eqref{mbles1} diverges to $\infty$ as $c \to c_0$.
 This leads to a contradiction.

 A similar argument works for the second inequality of (\ref{mlc11}).
 Thus we obtain the latter inequality of \eqref{ie:mbmf}, and the proof is complete.
\end{proof}

%%%%%%%%%%%%%%%%%%%%
%%%%%%%%%%%%%%%%%%%%

\section{Structure of traveling pulse solutions}\label{sec:TPS}

Let us consider traveling pulse solutions of (\ref{eq1_TV}) with (\ref{eq:bctmp0}).
For simplicity of notation, we let 
\begin{equation}\label{def:tu1}
 \tilde u_1 = 
  \left\{
   \begin{aligned}
    & u_1 
    & & \mbox{if } \ (K,c) \in \mathcal{D}_1, \\
    & 0 
    & & \mbox{if } \ (K,c) \in \mathcal{D}_2.
   \end{aligned}
  \right.
\end{equation}
To prove Theorem~\ref{thm:HOCO},
we examine the properties of $u^\pm_j$.
We note that the derivatives $(u^\pm_1)_\mu$ (resp. $(u^\pm_2)_\mu$) 
exist if $u^\pm_1 \in (u_0, u_2)$ (resp. $u^\pm_2 \in (\tilde u_1,u_0)$),
thanks to the smooth dependence of stable and unstable manifolds 
of the equilibria $(u_1, 0)$ and $(u_2, 0)$ on parameters.

%%%%%%%%%%%%%%%%%%%%
%%%%%%%%%%%%%%%%%%%%

\begin{lemma}\label{lem:mwmc}
 It holds that $\pm (u^\pm_1)_\mu > 0$ if $u^\pm_1 \in (u_0,u_2)$
 and $\pm (u^\pm_2)_\mu > 0$ if $u^\pm_2 \in (\tilde u_1,u_0)$. 
\end{lemma}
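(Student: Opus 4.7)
The plan is to prove the first inequality, $(u^+_1)_\mu > 0$ when $u^+_1 \in (u_0,u_2)$; the other three cases follow by entirely analogous arguments, with signs tracked through the signs of $w^\pm_j$ and of $f(u^\pm_j)$ at the relevant endpoint. First I would verify that $u^+_1$ is $C^1$ in $\mu$. Parametrizing $\mathcal{O}^+_1$ by $z$ and letting $Z^*(\mu)$ denote the first time at which the orbit from $(u_1,0)$ reaches $\{w=0\}$, the crossing is transversal because $w_z|_{z=Z^*} = g_1(u^+_1)f(u^+_1) < 0$ (since $f<0$ on $(u_0,u_2)$). The implicit function theorem then gives $Z^*$, and hence $u^+_1(\mu) = u^{\rm u}_1(Z^*(\mu);\mu)$, of class $C^1$.

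Next I would derive a formula for $(u^+_1)_\mu$. Integrating \eqref{eq:wueq2} over $[u_1, u^+_1(\mu)]$ and using $w^+_1=0$ at both endpoints gives $\int_{u_1}^{u^+_1} g_1 f\,ds + \int_{u_1}^{u^+_1} g_2 h\,w^+_1\,ds = 0$. Differentiating in $\mu$ and writing $W := (w^+_1)_\mu$ yields
\[
 g_1(u^+_1)f(u^+_1)\,(u^+_1)_\mu + \int_{u_1}^{u^+_1}\!\big(g_2 w^+_1 + g_2 h\,W\big)\,ds = 0.
\]
Differentiating the identity $w^+_1 (w^+_1)_u = g_1 f + g_2 h w^+_1$ in $\mu$ yields $(w^+_1 W)_u = g_2 w^+_1 + g_2 h W$, and since $W(u_1)=0$ (because the equilibrium $(u_1,0)$ is $\mu$-independent), the integral collapses to the boundary term $\lim_{u \uparrow u^+_1} w^+_1(u)W(u)$. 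Hence
\[
 (u^+_1)_\mu = -\frac{\lim_{u \uparrow u^+_1} w^+_1(u)W(u)}{g_1(u^+_1) f(u^+_1)}.
\]

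Finally I would show this limit is a positive finite number by local asymptotic analysis. From $(w^+_1)^2_u = 2g_1 f + 2g_2 h w^+_1$ one gets $w^+_1(u)^2 = C(u^+_1 - u)(1+o(1))$ with $C := -2 g_1(u^+_1)f(u^+_1) > 0$. For $W$, which satisfies $W_u = -g_1 f/(w^+_1)^2\,W + g_2$, the coefficients satisfy $-g_1 f/(w^+_1)^2 > 0$ and $g_2 > 0$ on $(u_0, u^+_1)$, so $W_u > 0$ whenever $W \ge 0$; combined with $W > 0$ from Lemma~\ref{lem:mwmu}, $W$ is strictly increasing on $(u_0, u^+_1)$ and bounded below by $W(u_0) > 0$. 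Near $u^+_1$, the linear ODE for $W$ admits the general asymptotic $W(u) = C'/\sqrt{u^+_1-u} + O(u^+_1-u)$, with the three cases $C'>0$, $=0$, $<0$ corresponding to $W \to +\infty$, $\to 0$, $\to -\infty$. Since $\liminf W \ge W(u_0)>0$, only $C' > 0$ is consistent, so $w^+_1 W \to \sqrt{C}\,C' > 0$ and $(u^+_1)_\mu = 2C'/\sqrt{C} > 0$. The main obstacle is making this local asymptotic analysis of the singular linear ODE for $W$ near $u^+_1$ fully rigorous, including uniform control of error terms sufficient to pass to the limit.
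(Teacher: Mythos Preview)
Your approach is essentially correct and arrives at the same formula as the paper, but you make the final step harder than necessary. The quantity $\lim_{u\uparrow u^+_1} w^+_1(u)W(u)$ is exactly $\tfrac12\zeta(u^+_1)$ where $\zeta\equiv(w^+_1{}^2)_\mu$, and the paper works directly with $\zeta$ rather than with $W=(w^+_1)_\mu$. The point is that $\zeta$ satisfies the linear ODE
\[
\zeta_u=\frac{g_2(u)h(u,\mu)}{w^+_1}\,\zeta+2g_2(u)w^+_1,
\]
which is precisely your identity $(w^+_1 W)_u=g_2w^+_1+g_2hW$ rewritten. Because $w^+_1(u)\ge C\sqrt{u^+_1-u}$ near the endpoint (from $w^+_1(w^+_1)_u\to g_1(u^+_1)f(u^+_1)<0$), the coefficient $g_2h/w^+_1$ is \emph{integrable} on $(u_1,u^+_1]$. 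Lemma~\ref{lem:Wpes0} then immediately gives that $\zeta$ extends continuously to $u^+_1$ with $\zeta(u^+_1)>0$, and implicit differentiation of $w^+_1(u^+_1)^2=0$ yields $(u^+_1)_\mu=-\zeta(u^+_1)/(2g_1(u^+_1)f(u^+_1))>0$.

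So the paper sidesteps entirely the singular asymptotic analysis of $W$ near $u^+_1$ that you flag as the main obstacle: by passing to $\zeta=2w^+_1 W$ the singularity in the ODE for $W$ (where the coefficient $-g_1f/(w^+_1)^2$ blows up like $1/(u^+_1-u)$) becomes merely an integrable singularity in the ODE for $\zeta$. Your trichotomy argument on the constant $C'$ can be made rigorous, but it is unnecessary once one observes that the product $w^+_1 W$ is the right object to track.
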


%%%%%%%%%%%%%%%%%%%%
%%%%%%%%%%%%%%%%%%%%

\begin{proof}
 We only prove the assertion for $u^+_1$.
 The others can be handled in a similar way.
 For abbreviation, we write $w$ instead of $w_1^+$.

 Assume $u^+_1 \in (u_0,u_2)$ and set $\zeta = \zeta (u) \equiv (w^2)_\mu$.
 We first show that $\zeta$ is continuously extended up to the point $u = u^+_1$
 and $\zeta (u^+_1) > 0$.
 It is easy to see from Lemma~\ref{lem:mwmu}
 that $\zeta$ is positive if $u$ is bigger than and close to $u_1$.
 By differentiating \eqref{eq:wueq2} with respect to $\mu$,
 we see that $\zeta$ satisfies
 \[
 \zeta_u = \frac{g_2 (u) h (u,\mu)}{w} \zeta + 2 g_2 (u) w.
 \]
 Let us check that $g_2 h (\cdot,\mu) / w$ is locally integrable on $(u_1, u^+_1]$.
 From \eqref{eq:wueq2} and \eqref{wzep}, we have
 $\lim_{u \to u^+_1} w w_u = g_1 (u^+_1) f (u^+_1)$.
 Since $f (u^+_1) < 0$, 
 there is a constant $C > 0$ such that $w (u) \ge C \sqrt{u^+_1 - u}$
 for $u$ close to $u^+_1$, which implies the local integrability of 
 $g_2 h (\cdot,\mu) / w$.
 Then it follows from Lemma~\ref{lem:Wpes0} that 
 $\zeta$ is continuously extended up to $u^+_1$ and positive in $(u_1, u^+_1]$.

 Recall that $u^+_1$ is determined implicitly by $w (u^+_1)^2 = 0$.
 Differentiating this equality with respect $\mu$ and then using \eqref{eq:wueq2},
 we see that
 \[
 2 g_1 (u^+_1) f (u^+_1) (u^+_1)_\mu + \zeta (u^+_1) = 0.
 \]
 Therefore we obtain
 \[
 (u^+_1)_\mu = - \frac{\zeta (u^+_1)}{2 g_1(u^+_1) f (u^+_1)} > 0,
 \]
 which completes the proof.
\end{proof}

%%%%%%%%%%%%%%%%%%%%
%%%%%%%%%%%%%%%%%%%%

\begin{lemma}\label{lem:umom}
 Define
 \[
 \mu^+_1 \equiv \inf \{ \mu \in \r \ | \ u^+_1 > u_0 \},
 \quad \mu^-_1 \equiv \sup \{ \mu \in \r \ | \ u^-_1 > u_0 \}
 \]
 for $(K,c) \in \mathcal{D}_1$ and 
 \[
 \mu^+_2 \equiv \sup \{ \mu \in \r \ | \ u^+_2 < u_0 \},
 \quad \mu^-_2 \equiv \inf \{ \mu \in \r \ | \ u^-_2 < u_0 \}
 \]
 for $(K,c) \in \mathcal{D}_1 \cup \mathcal{D}_2$.
 Then $\mu^+_1, \mu^-_2 \in [-\infty, \infty)$
 and $\mu^-_1, \mu^+_2 \in (-\infty, \infty]$.
 Moreover, one has
 \begin{equation}\label{umom}
  \mu^+_1<\mu^-_1, \quad \mu^+_2>\mu^-_2.
 \end{equation}
\end{lemma}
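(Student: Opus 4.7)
The plan is to split the lemma into two stages: the endpoint claims (finiteness/infiniteness of $\mu^\pm_j$), handled via the monotonicities of Lemma~\ref{lem:mwmc} and the asymptotics of Lemma~\ref{lem:bwmi}; and the strict inequalities in \eqref{umom}, handled via a hyperbolicity dichotomy at the middle equilibrium $(u_0,0)$. For the endpoints, Lemma~\ref{lem:mwmc} makes each of the four sets $\{u^+_1>u_0\}$, $\{u^-_1>u_0\}$, $\{u^+_2<u_0\}$, $\{u^-_2<u_0\}$ a one-sided interval in $\mu$, and Lemma~\ref{lem:bwmi} with $a=u_0$ shows each is nonempty at the appropriate asymptotic end, yielding $\mu^+_1,\mu^-_2 \in [-\infty,\infty)$ and $\mu^-_1,\mu^+_2 \in (-\infty,\infty]$.

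For $\mu^+_1<\mu^-_1$, set $\mu_0 \equiv -f'(u_0)$, so that $h(u_0,\mu_0)=0$ and, by Lemma~\ref{lem:evus}(ii), the equilibrium $(u_0,0)$ is a hyperbolic source for $\mu>\mu_0$ and a hyperbolic sink for $\mu<\mu_0$. When $\mu>\mu_0$, no forward orbit can converge to $(u_0,0)$, so $u^+_1(\mu)\neq u_0$; combined with $u^+_1 \ge u_0$ from \eqref{obtep}, this forces $u^+_1>u_0$ and hence $\mu^+_1\le\mu_0$. Symmetrically, when $\mu<\mu_0$ no backward orbit can reach $(u_0,0)$, so $u^-_1>u_0$ and $\mu^-_1\ge\mu_0$. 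To upgrade to a strict inequality I would analyze $\mu=\mu_0$ itself, where $(u_0,0)$ is linearly a center: by the planar center--focus dichotomy, it is nonlinearly either a stable focus, an unstable focus, or a true center. The identity $u^+_1(\mu_0)=u_0$ requires nonlinear stability (forward asymptotic approach of $\mathcal{O}^+_1$), while $u^-_1(\mu_0)=u_0$ requires nonlinear instability (backward asymptotic approach of $\mathcal{O}^-_1$); since these are mutually exclusive, at least one of $u^\pm_1(\mu_0)>u_0$ must hold strictly, and continuous dependence in $\mu$ then upgrades the corresponding bound to a strict one. Hence $\mu^+_1<\mu^-_1$.

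The companion inequality $\mu^+_2>\mu^-_2$ is proved by applying the same argument to the manifolds $\mathcal{O}^\pm_2$ based at $(u_2,0)$: $u^-_2(\mu)=u_0$ requires $(u_0,0)$ to be a nonlinear sink and $u^+_2(\mu)=u_0$ requires a nonlinear source, and the center--focus dichotomy again rules out their simultaneous occurrence at $\mu_0$. The hardest part of this plan is the local analysis at the non-hyperbolic value $\mu=\mu_0$, which requires verifying the center--focus dichotomy concretely for (\ref{eq1_TV}) and ruling out the simultaneous forward- and backward-asymptotic approaches to $(u_0,0)$; everything else reduces to the monotonicities in Lemmas~\ref{lem:mwmu}, \ref{lem:mwmc}, to Lemma~\ref{lem:bwmi}, and to continuous dependence of invariant manifolds on parameters.
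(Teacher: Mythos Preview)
Your treatment of the endpoint claims matches the paper's: Lemma~\ref{lem:bwmi} supplies nonemptiness of the four sets and Lemma~\ref{lem:mwmc} supplies the one-sided interval structure. (For $(K,c)\in\mathcal{D}_2$ the paper notes that the analogue of Lemma~\ref{lem:bwmi} for $u^\pm_2$ follows by the same argument as that lemma.)

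For the strict inequalities \eqref{umom}, your route is genuinely different from the paper's and, as written, has a gap. The paper never analyzes the equilibrium $(u_0,0)$. Instead it argues by contradiction: if $\mu^+_1\ge\mu^-_1$, the monotonicity in Lemma~\ref{lem:mwmc} forces $u^+_1=u^-_1=u_0$ on $[\mu^-_1,\mu^+_1]$, and at any such $\mu$ one subtracts the two instances of \eqref{eq:wueq} to obtain
\[
(w^+_1-w^-_1)_u=g_1(u)f(u)\left(\frac{1}{w^+_1}-\frac{1}{w^-_1}\right)>0\quad\text{on }(u_1,u_0),
\]
since $f>0$, $w^+_1>0$, $w^-_1<0$ there; but $w^+_1-w^-_1$ vanishes at both endpoints $u_1$ and $u_0$, a contradiction. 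This is entirely elementary and needs no local dynamics near $(u_0,0)$.

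Your hyperbolicity step for $\mu\neq\mu_0:=-f'(u_0)$ is correct and even yields the sharper localization $\mu^+_1\le\mu_0\le\mu^-_1$. The gap is at $\mu=\mu_0$: the ``center--focus dichotomy'' (nonlinear center, stable focus, or unstable focus) that you invoke is a theorem for \emph{analytic} planar fields; under (A1)--(A2) the field is only $C^1$, and for $C^\infty$ systems with a linear center the nonlinear picture need not fall into any of those three cases, so your mutual-exclusivity conclusion is unjustified. The fix is simpler than the dichotomy: since the eigenvalues at $(u_0,0)$ are $\pm i\omega_0$ with $\omega_0>0$, the equilibrium is monodromic, so any orbit converging to $(u_0,0)$ in forward or backward time must wind around it and hence cross $\{w=0\}$ infinitely often. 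But $\mathcal{O}^+_1\subset S_+$ and $\mathcal{O}^-_1\subset S_-$ never cross $\{w=0\}$, so in fact \emph{neither} $u^+_1(\mu_0)=u_0$ nor $u^-_1(\mu_0)=u_0$ can occur, giving $\mu^+_1<\mu_0<\mu^-_1$ directly. With this correction your argument goes through; the paper's comparison argument is shorter and avoids the non-hyperbolic analysis altogether.
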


%%%%%%%%%%%%%%%%%%%%
%%%%%%%%%%%%%%%%%%%%

\begin{proof}
 Assume $(K,c) \in \mathcal{D}_1$.
 The first statement follows immediately from Lemma~\ref{lem:bwmi}.
 We prove \eqref{umom}.
 On the contrary, suppose that $\mu^+_1 \ge \mu^-_1$.
 From Lemma~\ref{lem:mwmc}, we see that $u^+_1$ and $u^-_1$ 
 are nondecreasing and nonincreasing with respect to $\mu$, respectively.
 Hence $u^+_1=u^-_1=u_0$ for all $\mu \in [\mu^-_1,\mu^+_1]$.
 By \eqref{eq:wueq}, we have
 \[
 (w^+_1 -w^-_1)_u =g_1(u)f(u)\left( \frac{1}{w^+_1} -\frac{1}{w^-_1}\right).
 \]
 Since the right-hand side is positive in $(u_1,u_0)$, we deduce that
 \[
 w^+_1 (u_0) - w^-_1 (u_0) > w^+_1 (u_1) - w^-_1 (u_1).
 \]
 This leads to a contradiction because 
 $w^+_1 (u_0) = w^-_1 (u_0) = 0$ and $w^+_1 (u_1) = w^-_1 (u_1) = 0$.
 Therefore the former inequality of \eqref{umom} holds.
 The latter inequality can be shown in a similar way.

 Next we assume $(K,c) \in \mathcal{D}_2$.
 By the same argument as in the proof of Lemma~\ref{lem:mwmc}, it follows that 
 $u_2^- < a$ (resp. $u_2^+ < a$) if $\mu$ (resp. $-\mu$) is large enough.
 Then we easily verify (\ref{umom}) in the same manner as above.
\end{proof}

%%%%%%%%%%%%%%%%%%%%
%%%%%%%%%%%%%%%%%%%%

We give sufficient conditions on $\mu$ for which $u_2^\pm$ is positive
in the case of $(K,c) \in \mathcal{D}_2$.
We use the notation $\mu_0 = - f' (0)$, which has already been defined in 
the proof of Proposition~\ref{prop:BMBF}.

%%%%%%%%%%%%%%%%%%%%
%%%%%%%%%%%%%%%%%%%%

\begin{lemma}\label{u2pmp}
 Let $(K,c) \in \mathcal{D}_2$ and assume \eqref{cd:nu}.
 Then $u^+_2 > 0$ if $\mu \ge \mu_0$, while $u^-_2 > 0$ if $\mu \le \mu_0$. 
\end{lemma}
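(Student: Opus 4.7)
I would prove this by contradiction, starting from the energy identity \eqref{eq:wueq2},
\[
\tfrac{1}{2}(w^2)_u = g_1(u) f(u) + g_2(u) h(u,\mu) w,
\]
and exploiting the divergence near $u = 0$ of $\int g_1(u) f(u)\,du$ guaranteed by \eqref{cd:nu}.

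For the assertion on $u^+_2$ under $\mu \ge \mu_0$, I would suppose $u^+_2 = 0$, so that $w = w^+_2$ is positive on $(0, u_2)$. Tracking the orbit in the $(u,w)$-plane, either it exits $S_+$ at a finite $z$-time along $\{u=0\}$ or, when $f(0)=0$, it accumulates at the equilibrium $(0,0)$ as $z \to -\infty$; in either scenario $w$ is bounded by some $M > 0$ along the trajectory. Since $h(0, \mu) = f'(0) + \mu \ge 0$ and $h$ is $C^1$ in $u$, Taylor's theorem gives $h(u, \mu) \ge -Lu$ on $[0, \delta_0]$ for some $L, \delta_0 > 0$, and the identity $g_2 = u g_1/2$ combined with $w \le M$ yields $g_2 h w \ge -(LM/2) u^2 g_1$ on $(0, \delta_0)$. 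Lemma~\ref{lem:prf}(ii) ensures either $f(0) > 0$ or $f(0) = 0$ with $f'(0) > 0$, so after shrinking $\delta_0$ the $u^2$ correction is dominated by $f(u)$ and the energy identity produces $\tfrac12 (w^2)_u \ge c\, g_1(u) f(u)$ on $(0, \delta_0)$ for some $c > 0$. Integrating from $\epsilon$ to $\delta_0$ and changing variables via $\rho = 1/u$ (which converts the relevant integrals into $\tau^{-1}\int^\infty d\rho/(\rho^2 \kappa)$ or $\tau^{-1}\int^\infty d\rho/(\rho^3 \kappa)$, both divergent under \eqref{cd:nu}), one concludes that $w(\epsilon)^2 \to -\infty$, a contradiction.

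The assertion on $u^-_2$ under $\mu \le \mu_0$ is strictly simpler. Here $w < 0$ and $h(u,\mu) \le h(u,\mu_0) = f'(u) - f'(0)$, which is strictly negative on some $(0, \delta_0)$ since $V'(u) > V'(0) = K_0$ for small $u > 0$ by (A1). Thus $g_2 h w \ge 0$ directly, so $\tfrac12(w^2)_u \ge g_1 f$ on $(0, \delta_0)$ without any Taylor correction, and the same divergence argument under \eqref{cd:nu} produces the contradiction.

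The hard part will be the borderline case $\mu = \mu_0$ of the $u^+_2$ statement. There $h(0, \mu_0) = 0$ but $h$ is negative on every right neighborhood of $0$ (since $f''(0) = -V''(0)/K < 0$), so $g_2 h w$ has the ``wrong'' sign and must be controlled precisely. The mechanism saving the argument is that the linear vanishing $h(u, \mu_0) = O(u)$ is matched exactly by the factor $u$ in $g_2$, reducing the bad contribution to $O(u^2 g_1(u))$, strictly dominated by the positive leading term $g_1(u) f(u)$ of order $g_1(u)$ (when $f(0) > 0$) or $u g_1(u)$ (when $f(0) = 0$) near zero. The a priori boundedness of $w$ along the orbit, used to pass from $g_2 h w$ to $u^2 g_1(u)$, is a second subtle point that needs the phase-portrait argument sketched above.
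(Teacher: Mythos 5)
Your overall strategy coincides with the paper's: argue by contradiction, integrate the identity \eqref{eq:wueq2} toward $u=0$, and use $f(u)\ge Cu$ together with \eqref{cd:nu} (after the substitution $\rho=1/u$) to force $w_2^\pm(u)^2\to-\infty$. For $\mu>\mu_0$, and for the entire $u_2^-$ assertion, your argument is exactly the paper's and is correct: there $h(u,\mu)$ has a favorable sign on $[0,\delta_0]$ (positive for $\mu>\mu_0$ by continuity, nonpositive for $\mu\le\mu_0$ since $h(u,\mu_0)=(K_0-V'(u))/K\le 0$ near $0$), so the cross term $g_2hw$ can simply be discarded. You are also right that the borderline case $\mu=\mu_0$ of the $u_2^+$ statement is the delicate one: the paper's proof asserts $h(u,\mu)\ge 0$ on $[0,\delta_0]$ for all $\mu\ge\mu_0$, and this fails at $\mu=\mu_0$ precisely because $V'$ increases near $0$; identifying that is a genuine observation that the paper glosses over.

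However, your repair of that borderline case has a gap at its linchpin, the a priori bound $w_2^+\le M$ near $u=0$. The dichotomy you invoke (exit through $\{u=0\}$ at finite $z$, or accumulation at the equilibrium $(0,0)$) omits the possibility that $w_2^+(u)\to\infty$ as $u\to 0^+$, and the obvious estimate does not exclude it: from $w_u\ge g_2h\ge-\tfrac{u}{2}g_1(u)\,|h(u,\mu_0)|$ one only gets $w(u)\le w(\delta_0)+\tfrac12\int_u^{\delta_0}s\,g_1(s)\,|h(s,\mu_0)|\,ds$, and under \eqref{cd:nu} the integral $\int_0^{\delta_0}s\,g_1(s)\,ds$ diverges, so the bound is vacuous unless the decay of $h$ at $0$ is controlled quantitatively --- which (A1) does not provide, since $V$ is only $C^2$ on the open interval and $V''$ may be unbounded at $0$ (this also undercuts your Taylor bound $h\ge -Lu$; continuity of $h$ with $h(0,\mu_0)=0$ would suffice for the domination step, but only \emph{if} $w$ were already known to be bounded). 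So the boundedness of $w$ is not a secondary "subtle point" that the phase portrait delivers; it is exactly where the difficulty of the $\mu=\mu_0$ case concentrates, and as written neither your argument nor the paper's closes it.
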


%%%%%%%%%%%%%%%%%%%%
%%%%%%%%%%%%%%%%%%%%

\begin{proof}
 Let $\mu \ge \mu_0$. 
 Then there is $\d_0 > 0$ such that $h (u, \mu) \ge 0$ for $u \in [0,\d_0]$.
 To obtain a contradiction, we assume that $w_2^+ (u)$ is positive in $(0, u_2)$.
 For any $u \in (0, \d_0)$, 
 we integrate \eqref{eq:wueq2} over $[u, \d_0]$ and then have
 \begin{equation}\label{es:u2pmp}
  w^+_2 (\d_0)^2 - w^+_2 (u)^2 \ge 2 \int_{u}^{\d_0} g_1 (s) f (s) ds.
 \end{equation} 
 From (ii) of Lemma~\ref{lem:prf}, we can choose a constant $C>0$ 
 such that $f (u) \ge C u$ for all $u$ close to $0$. 
 Then it follows that the right-hand side of \eqref{es:u2pmp}
 diverges to $\infty$ as $u \to 0$ by the assumption \eqref{cd:nu}.
 On the other hand, the left-hand side of (\ref{es:u2pmp}) is bounded above 
 as $u \to 0$, which is a contradiction.
 Therefore $w^+_2$ must vanish at some point in $(0, u_2)$,
 which implies $u^+_2 > 0$.

 In a similar way, we can also show that $u^-_2>0$ if $\mu \le \mu_0$.
 Therefore the lemma follows.
\end{proof}

%%%%%%%%%%%%%%%%%%%%
%%%%%%%%%%%%%%%%%%%%

Before proceeding to the proof of Theorem~\ref{thm:HOCO}, 
we first give necessary conditions for the existence of solutions 
to the problem (\ref{eq1_TV}) with \eqref{eq:bctmp0}.
We will apply the following arguments not only for homoclinic orbits
but also for periodic orbits discussed in the next section.

%%%%%%%%%%%%%%%%%%%%
%%%%%%%%%%%%%%%%%%%%

\begin{lemma}\label{lem:neho}
 If $(K,c) \not \in \mathcal{D}_1 \cup \mathcal{D}_2$,
 then there is no solution $(u, w)$ of (\ref{eq1_TV}) 
 satisfying (\ref{eq:bctmp0}) and $u (z) > 0$ in $-\infty < z < \infty$.
\end{lemma}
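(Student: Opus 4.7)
The plan is to argue by contradiction: assume $(u, w)$ is a nontrivial solution of (\ref{eq1_TV}) satisfying (\ref{eq:bctmp0}) with $u(z) > 0$ on $\r$, for some positive zero $\overline{u}$ of $f$. The first tool I would establish is an \emph{extremum lemma} that uses only the ODE: because $u(\pm\infty) = \overline{u}$ and $u \not\equiv \overline{u}$, the function $u$ attains either a global maximum $u_{\max} > \overline{u}$ or a global minimum $u_{\min} < \overline{u}$ at some finite $z^*$. At such a $z^*$, $u_z(z^*) = w(z^*) = 0$, and (\ref{eq1_TV}) gives $u_{zz}(z^*) = w_z(z^*) = g_1(u(z^*)) f(u(z^*))$; combining this with the sign of $u_{zz}$ at an extremum forces $f(u_{\max}) \le 0$ (resp.\ $f(u_{\min}) \ge 0$). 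Equality would place an equilibrium on the orbit and, by uniqueness, make the orbit constant, violating the nontriviality clause in (\ref{eq:bctmp0}); hence in fact $f(u_{\max}) < 0$ or $f(u_{\min}) > 0$.

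Next I would enumerate the possible sign patterns of $f$ on $(0, \infty)$ when $(K, c) \not\in \mathcal{D}_1 \cup \mathcal{D}_2$. By (A1), $V'$ is strictly monotone on $(0, U_M)$ and on $(U_M, \infty)$, so $G(u) = Ku - V(u)$ has at most two critical points in $(0, \infty)$, $f$ has at most three positive roots, and $f(u) \to \infty$ as $u \to \infty$ since $V$ is bounded. A direct check (matching against Lemma~\ref{lem:prf}) shows that the pattern $(-,+,-,+)$ of three simple zeros corresponds exactly to $\mathcal{D}_1$, while $(+,-,+)$ of two simple zeros corresponds exactly to $\mathcal{D}_2$. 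Outside $\mathcal{D}_1 \cup \mathcal{D}_2$ the admissible configurations reduce to: (i) $f$ has at most one positive zero; or (ii) $f$ has two positive zeros with one of them a double zero, arising on the boundary curves $c = c_M(K)$ (pattern $(-,0,-,+)$) and $c = c_m(K)$ (pattern $(-,+,0,+)$ when $K > K_1$, or $f \ge 0$ with a lone double zero when $K \le K_1$).

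In each of these configurations I would check, for every positive zero $\overline{u}$ of $f$, that the extremum lemma cannot be satisfied. At a simple zero with $f'(\overline{u}) > 0$, the remaining sign patterns have $f \le 0$ on the entire left of $\overline{u}$ and $f \ge 0$ on the entire right, so neither $f(u_{\max}) < 0$ for some $u_{\max} > \overline{u}$ nor $f(u_{\min}) > 0$ for some $u_{\min} < \overline{u}$ can occur. At a double zero, one of the two inequalities is still excluded immediately, while for the other the orbit is forced to lie on a single side of $\overline{u}$; the orbit together with $(\overline{u}, 0)$ then forms a simple closed curve whose open bounded interior is positively and negatively invariant and contains no equilibrium of (\ref{eq1_TV}). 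A Poincar\'{e}--Bendixson argument applied to this invariant region shows that the $\omega$-limit of any interior point must contain a periodic orbit or an equilibrium, yielding a contradiction.

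The main obstacle will be the bookkeeping across the sign-pattern cases above, and especially the degenerate double-zero subcases, where by Lemma~\ref{lem:evus} the linearization at $(\overline{u}, 0)$ has a zero eigenvalue. In these subcases the extremum lemma alone permits a one-sided loop, so the Poincar\'{e}--Bendixson step is what ultimately closes the argument; I would phrase it in a form that also applies to closed orbits, as anticipated by the remark preceding the lemma about reuse for periodic orbits in the following section.
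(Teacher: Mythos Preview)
Your extremum lemma and the treatment of simple zeros with $f'(\overline{u})>0$ are fine and match the paper's argument for what it calls $\mathcal{D}_4$. The gap is in your handling of the double-zero cases. Your Poincar\'{e}--Bendixson step does \emph{not} yield a contradiction as written: the open region $D$ bounded by the one-sided loop is invariant and contains no equilibrium, but the closure $\overline{D}$ does contain the equilibrium $(\overline{u},0)$ on its boundary, and Poincar\'{e}--Bendixson allows the $\omega$-limit of an interior trajectory to be exactly $\{(\overline{u},0)\}$ or the whole homoclinic loop. Neither possibility is ruled out by what you wrote, so the argument stalls.

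The paper closes this gap with a different, elementary device that you are missing: an integral identity. Writing $H$ for a primitive of $g_2(\cdot)h(\cdot,\mu)$, one has $(w-H(u))_z=g_1(u)f(u)$ along any solution; integrating over $\r$ against \eqref{eq:bctmp0} gives
\[
\int_{-\infty}^{\infty} g_1(u(z))\,f(u(z))\,dz=0.
\]
Since $g_1>0$, this forces $f(u(z))\equiv 0$ whenever $f$ keeps one sign along the orbit. The paper partitions the complement of $\mathcal{D}_1\cup\mathcal{D}_2$ into $\mathcal{D}_3$ (where $f\ge 0$ on $(0,\infty)$, so the identity applies directly) and $\mathcal{D}_4$ (where $f$ changes sign exactly once; your extremum lemma traps the orbit on one side of that sign change, and then the identity finishes). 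In particular, your one-sided loops at a double zero already have $f$ of one sign on the range of $u$, so the identity gives the contradiction immediately---no Poincar\'{e}--Bendixson needed. Replace that step with the integral identity and your proof goes through; as a bonus, the identity argument also covers periodic orbits verbatim, which is exactly the reuse hinted at before the lemma.
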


%%%%%%%%%%%%%%%%%%%%
%%%%%%%%%%%%%%%%%%%%

\begin{proof}
 From the assumption, $(K, c) \in \mathcal{D}_3 \cup \mathcal{D}_4$,
 where 
 \[
 \begin{aligned}
  \mathcal{D}_3 & \equiv \{ (K, c) \ | \ K < 0, \ c \ge c_0 \}
  \cup \{ (K, c) \ | \ 0 < K \le K_1, \ c \le c_m (K) \}
  \cup \{ (K, c) \ | \ K_1 < K, \ c \le c_0 \}, \\
  \mathcal{D}_4 & \equiv \{ (K, c) \ | \ K<0, \ c < c_0 \}
  \cup \{ (K, c) \ | \ 0 < K \le K_0, \ c \ge c_0 \}
  \cup \{ (K, c) \ | \ K_0 < K <K_M, \ c \ge c_M(K) \} \\
  & \quad \cup \{ (K, c) \ | \ K_1 < K < K_M, \ c_0 < c \le c_m (K) \}
  \cup \{ (K, c) \ | \ K \ge K_M, \ c > c_0 \}.  
 \end{aligned}
 \]
 We easily see that $f (u) \ge 0$ in $u > 0$ 
 if $(K, c) \in \mathcal{D}_3$.
 On the other hand, if $(K, c) \in \mathcal{D}_4$, 
 then there is $u_* > 0$ such that $f (u) \le 0$ in $0 \le u < u_*$,
 while $f (u) \ge 0$ in $u \ge u_*$.

 To obtain a contradiction, 
 suppose that (\ref{eq1_TV}) has a solution $(u,w)$ satisfying (\ref{eq:bctmp0}).
 We first consider the case $(K, c) \in \mathcal{D}_3$.
 Let $H$ be a primitive of $g_2 h (\cdot ,\mu)$.
 Then we have
 \[
 (w-H(u))_z =w_z -g_2(u) h(u,\mu) u_z =g_1(u) f(u).
 \]
 Integrating this over $(-\infty, \infty)$ and using (\ref{eq:bctmp0}),
 we deduce that
 \begin{equation}\label{eqne}
  0 = \int_{-\infty}^{\infty} g_1 (u(z)) f (u(z)) d z.
 \end{equation}
 Then we obtain $f (u(z)) \equiv 0$, which contradicts the condition
 $(u(z), w(z)) \not \equiv (\overline{u}, 0)$.

 Next we assume $(K, c) \in \mathcal{D}_4$.
 From (\ref{eq:bctmp0}),
 we see that $u$ has either a global maximum or a global minimum.
 Suppose that $u$ has a global maximum at some $z_0$.
 Then we have 
 \[
 w (z_0) = u_z (z_0) = 0, \quad w_z (z_0) = u_{zz}(z_0) \le 0. 
 \]
 Substituting these into the second equality of (\ref{eq1_TV}) 
 yields $g_1 (u (z_0)) f (u (z_0)) \le 0$.
 Since $(u, w)$ is not an equilibrium, we see $f (u (z_0)) < 0$
 and then $u (z) \le u (z_0) \le u_*$.
 However, it follows from the equality \eqref{eqne} that $f (u(z)) \equiv 0$,
 which is a contradiction.
 The other case can be derived in the same way as above.
 Thus the proof is complete.
\end{proof}

%%%%%%%%%%%%%%%%%%%%
%%%%%%%%%%%%%%%%%%%%

We are now in a position to show Theorem~\ref{thm:HOCO}.

%%%%%%%%%%%%%%%%%%%%
%%%%%%%%%%%%%%%%%%%%

\begin{proof}[Proof of Theorem~\ref{thm:HOCO}]
 The assertion (i) is a direct consequence of Lemma~\ref{lem:neho}.
 We begin with the proof of (ii).
 On the contrary, suppose that there exists a solution $(u, w)$ 
 of (\ref{eq1_TV}) satisfying \eqref{eq:bctmp0} with $\overline{u} = u_0$.
 Then we must have $\mu = - f' (u_0)$
 because (ii) of Lemma~\ref{lem:evus} shows that any solution of (\ref{eq1_TV})
 cannot converge to $(u_0, 0)$ as either $z \to \infty$ or $z \to -\infty$
 if $\mu \neq - f' (u_0)$.
 Let $z_* > 0$ be sufficiently large. 
 Clearly, $(u, w)$ is close to $(u_0, 0)$ in $|z| \ge z_*$.
 More precisely, there are $z_0$ and $r = r (z) > 0$ such that 
 $r \to 0$ as $|z| \to \infty$ and $(u, w)$ is approximated by 
 \[
 (u_0, 0) + r (\cos \omega_0 (z - z_0), - \omega_0 \sin \omega_0 (z - z_0))
 \]
 in $|z| \ge z_*$ from (ii) of Lemma~\ref{lem:evus}.
 Then the orbit of $(u, w)$ must intersect with itself,
 which leads to the contradiction because of 
 the uniqueness of a solution in ordinary differential equations.

 Let us turn to the proofs of (iii) and (iv).
 First we consider the case $(K,c) \in \mathcal{D}_1$ and $\overline{u}=u_1$.
 It is sufficient to check the condition
 \begin{equation}\label{crho}
  u^+_1 = u^-_1 \in (u_0, u_2).
 \end{equation}
 Recall that
 \[
 \begin{aligned}
  & u^+_1 = u_2 \ \mbox{ and } \ w^+_1(u_2)=0 
  \ \mbox{ if and only if } \ \mu=\mu_b,\\
  & u^-_1=u_2 \ \mbox{ and } \ w^-_1(u_2)=0 
  \ \mbox{ if and only if } \ \mu=\mu_f.   
 \end{aligned}
 \]
 These with Lemma~\ref{lem:mwmc} imply that
 \begin{equation}\label{upmr}
  \begin{aligned}
   & u^+_1 
   \left\{
   \begin{aligned}
    & \in (u_0,u_2)
    & & \mbox{if } \ \mu^+_1 < \mu < \mu_b, \\
    & = u_2
    & & \mbox{if } \ \mu \ge \mu_b,
   \end{aligned}
   \right.
   \\
   & u^-_1 
   \left\{
   \begin{aligned}
    & = u_2 
    & & \mbox{if } \ \mu \le \mu_f, \\
    & \in (u_0,u_2)
    & & \mbox{if } \ \mu_f < \mu < \mu^-_1.
   \end{aligned}
   \right.
  \end{aligned}
 \end{equation}
 We also recall that
 \begin{equation}\label{rbmbf}
  \begin{aligned}
   & \mu_b \le \mu_f \ \mbox{ if } 
   \ (K,c) \in \mathcal{D}_{1,2} \cup \mathcal{D}_{1,3}, \\
   & \mu_b>\mu_f \ \mbox{ if } 
   \ (K,c) \in \mathcal{D}_{1,1}.   
  \end{aligned}
 \end{equation}
 Combining \eqref{umom}, \eqref{upmr}, and \eqref{rbmbf}, 
 we conclude that (\ref{crho}) is never satisfied 
 if $(K,c) \in \mathcal{D}_{1,2} \cup \mathcal{D}_{1,3}$,
 while (\ref{crho}) holds for some $\mu \in (\mu_f, \mu_b)$ 
 if $(K,c) \in \mathcal{D}_{1,1}$.
 The uniqueness of $\mu$ satisfying (\ref{crho}) follows from Lemma~\ref{lem:mwmc}.
 Therefore the assertion is proved in this case.

 Next, we examine the case $\overline{u}=u_2$.
 By the same argument applied above,
 we can show the unique existence of $\mu^2_{pul} \in (\mu_b,\mu_f)$ 
 for $(K,c) \in \mathcal{D}_{1,2}$
 and the nonexistence of solutions 
 for $(K,c) \in \mathcal{D}_{1,1} \cup \mathcal{D}_{1,3}$.
 Hence we only need to consider the case of 
 $(K,c) \in \mathcal{D}_2$ under (\ref{cd:nu}).
 Put
 \[
 \underline{\mu} \equiv \inf \{ \mu \in \r \ | \ u^+_2 > 0 \},
 \quad \overline{\mu} \equiv \sup \{ \mu \in \r \ | \ u^-_2 > 0 \}.
 \]
 From Lemmas~\ref{lem:umom} and \ref{u2pmp}, 
 we see $\underline{\mu} < \mu^+_2$ and $\mu^-_2 < \overline{\mu}$.
 We then have
 \begin{equation}\label{upmr2}
  u^+_2 \in (0,u_0) \ \mbox{if } \underline{\mu} < \mu < \mu^+_2,
   \quad u^-_2 \in (0,u_0) \ \mbox{if } \mu^-_2 < \mu < \overline{\mu}
 \end{equation}
 by Lemma~\ref{lem:mwmc}.
 Using Lemma~\ref{u2pmp} and the fact that $\{ \mu \in \r \ | \ u^-_2>0\}$ is open, 
 we see that
 \begin{equation}\label{oumu}
  \underline{\mu} < \mu_0 < \overline{\mu}.
 \end{equation}
 Combining \eqref{umom}, \eqref{upmr2} and \eqref{oumu}, we obtain 
 $\mu^2_{pul}$ such that $u^+_2=u^-_2 \in (0,u_0)$ for $\mu=\mu^2_{pul}$.
 Thus the proof is complete.
\end{proof}

%%%%%%%%%%%%%%%%%%%%
%%%%%%%%%%%%%%%%%%%%

We conclude this section by deriving an estimate of $\mu_{pul}^j$
to be used in Section~\ref{sec_bifurcation}.

%%%%%%%%%%%%%%%%%%%%
%%%%%%%%%%%%%%%%%%%%

\begin{lemma}\label{lem:empl}
 For $\mu = \mu_{pul}^j$ ($j = 1, 2$), let $(u, w)$ be the traveling pulse 
 solution obtained in (iv) of Theorem~\ref{thm:HOCO}.
 Then
 \[
 - \sup_{u \in (\underline{m},\overline{m})} f'(u)   
 < \mu_{pul}^j < - \inf_{u \in (\underline{m},\overline{m})} f' (u), 
 \]
 where 
 \[
 \underline{m}\equiv \inf_{z \in \r} u(z),
 \quad \overline{m}\equiv \sup_{z \in \r} u(z). 
 \]
\end{lemma}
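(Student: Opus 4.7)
My plan is to derive a single integral identity by multiplying the $w$-equation in \eqref{eq1_TV} by $w = u_z$ and integrating across the pulse, and then to read the lemma off as the statement that $-\mu^j_{pul}$ is a weighted average of $f'(u(z))$ over $z \in \r$.

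First I would use $w w_z = \tfrac{1}{2}(w^2)_z$ together with $g_1(u) f(u)\, w = \tfrac{d}{dz} G(u(z))$, where $G$ is any primitive of $g_1 f$. Multiplying the second equation of \eqref{eq1_TV} by $w$ and integrating over $\r$, the two exact-derivative terms contribute nothing because $(u,w) \to (\overline u, 0)$ at $z = \pm\infty$, so I obtain
\begin{equation*}
  \int_\r g_2(u(z)) \bigl(f'(u(z)) + \mu\bigr) w(z)^2 \, dz = 0.
\end{equation*}
Since $g_2(u) > 0$ for $u > 0$ by (A2) and $w \not\equiv 0$ (the pulse is nontrivial), the denominator in
\begin{equation*}
  -\mu^j_{pul} \;=\; \frac{\int_\r g_2(u(z))\, f'(u(z))\, w(z)^2\, dz}{\int_\r g_2(u(z))\, w(z)^2\, dz}
\end{equation*}
is strictly positive, and $-\mu^j_{pul}$ is expressed as a weighted average of $f'(u(z))$. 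Since $u(z)$ takes values in $[\underline m, \overline m]$, this immediately yields the non-strict inequalities $-\sup_{(\underline m,\overline m)} f' \le \mu^j_{pul} \le -\inf_{(\underline m,\overline m)} f'$.

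The main remaining step, which I expect to be the most subtle, is promoting these to strict inequalities. Suppose for contradiction that $\mu^j_{pul} = -\inf_{(\underline m,\overline m)} f'$; then the non-negative integrand $g_2(u)(f'(u) - \inf f')w^2$ has vanishing integral, so at every $z$ with $w(z) \neq 0$ the value $u(z)$ must minimize $f'$ on $[\underline m, \overline m]$. By (A1), $f''(u) = -V''(u)/K$ vanishes only at $u = U_M$, so $f'$ has at most one interior critical point on $[\underline m, \overline m]$ and the minimizer set consists of at most two points. On the other hand, the zeros of $w$ along the orbit are isolated: if $w(z_0) = 0$ with $f(u(z_0)) \neq 0$ then $w_z(z_0) \neq 0$, while if $w(z_0) = 0$ and $f(u(z_0)) = 0$ simultaneously then $(u(z_0), 0)$ is an equilibrium and uniqueness for \eqref{eq1_TV} would force $(u, w) \equiv (u(z_0), 0)$, contradicting nontriviality. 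Hence $\{z : w(z) \neq 0\}$ is open and dense in $\r$, and continuity of $u$ forces $u$ to be constant, contradicting $\underline m < \overline m$. The upper bound $\mu^j_{pul} > -\sup_{(\underline m,\overline m)} f'$ follows from the same argument applied to the non-positive integrand $g_2(u)(f'(u) - \sup f')w^2$.
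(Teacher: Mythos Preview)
Your argument is correct, but it reaches the conclusion by a different mechanism than the paper. The paper works two-dimensionally: it applies the divergence theorem to the vector field $F$ over the region $D$ enclosed by the homoclinic loop, obtaining
\[
\int_D \nabla \cdot F(u,w)\,du\,dw = \int_D g_2(u)\,h(u,\mu^j_{pul})\,du\,dw = 0,
\]
and then notes that since $g_2>0$ and $h(\cdot,\mu^j_{pul})=f'(\cdot)+\mu^j_{pul}$ is not constant on any interval (because $f''=-V''/K$ vanishes only at $U_M$), $h$ must change sign on $(\underline m,\overline m)$, which is precisely the pair of strict inequalities. You instead work one-dimensionally, multiplying the $w$-equation by $w=u_z$ and integrating over $\r$ to express $-\mu^j_{pul}$ as a $g_2(u)w^2$-weighted mean of $f'(u(z))$. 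After the change of variable $z\mapsto u$ on each monotone arc of the pulse, your integral and the paper's coincide, so the identities are equivalent; the difference is in the derivation and in how strictness is obtained. The paper's geometric argument is shorter and yields strictness in one line, while your route avoids Green's theorem entirely but needs the extra step about the minimizer set of $f'$ and isolated zeros of $w$ to exclude equality. Both are valid; your approach is slightly more elementary, the paper's slightly more concise.
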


%%%%%%%%%%%%%%%%%%%%
%%%%%%%%%%%%%%%%%%%%

\begin{proof}
 Let $D \subset \r^2$ be the region enclosed by the closed curve 
 $\{(u(z), w(z))\}_{z \in \r} \cup \{(u_j, 0)\}$ for $j = 1, 2$,
 and let $F (u,w)$ be the two-dimensional vector field defined in \eqref{def:FJ}.
 It is well-known that 
 \[
 \int_D \nabla \cdot F (u,w) d u d w = 0,
 \]
 which follows easily from the divergence theorem and (\ref{eq1_TV}).
 Since $\nabla \cdot F (u,w)=g_2 (u) h (u, \mu_{pul}^j)$,
 $h (\cdot, \mu_{pul}^j)$ must change its sign in $(\underline{m}, \overline{m})$.
 Therefore we obtain
 \[
 \inf_{u \in (\underline{m},\overline{m})} h(u, \mu_{pul}^j) < 0 
 < \sup_{u \in (\underline{m},\overline{m})} h(u, \mu_{pul}^j), 
 \]
 which concludes the lemma.
\end{proof}

%%%%%%%%%%%%%%%%%%%%
%%%%%%%%%%%%%%%%%%%%

\section{Structure of periodic traveling wave solutions}
\label{sec_Structure of periodic traveling wave solutions}

In this section, we study periodic traveling wave solutions.
The proof of Theorem~\ref{thm:ptws} is similar to that of Theorem~\ref{thm:HOCO}.
Let $(u, w)$ be a solution of (\ref{eq1_TV}) with 
the initial condition $(u(0), w(0)) = (q, 0)$.
First, we assume that $(K,c) \in \mathcal{D}_1$ and $q \in (u_1, u_0)$.
It is then seen from Lemma~\ref{lem:prf} that 
$(u (z), w (z)) \in S_+$ and $(u (-z),w (-z)) \in S_-$
for small $z > 0$, where $S_- \equiv  \{ (u, w) \ | \ q < u < u_2, \ w<0\}$ 
and $S_+ \equiv \{ (u, w) \ | \ q < u < u_2, \ w > 0 \}$.
Hence we define $z^{\pm}$ by 
\begin{equation}\label{eq6_zsud}
 \begin{aligned}
  & z^+ \equiv \sup \{ z_0 \ | \ \{(u (z), w (z)) \}_{0 < z < z_0} \subset S_+ \}
  \in (0,\infty], \\
  & z^- \equiv \inf \{ z_0 \ | \ \{(u (z), w (z)) \}_{z_0 < z < 0} \subset S_- \}
  \in [-\infty, 0).
 \end{aligned}
\end{equation}
Lemma~\ref{lem:prf} leads to 
\[
\begin{aligned}
 & (u (z^+), w (z^+)) 
 \in \{ (u, 0) \ | \ u_0 \le u \le u_2 \} \cup \{ (u_2, w) \ | \ w > 0 \}, \\
 & (u (z^-), w (z^-)) 
 \in \{ (u, 0) \ | \ u_0 \le u \le u_2 \} \cup \{ (u_2, w) \ | \ w < 0 \}.
\end{aligned}
\]
Since $u_z \neq 0$ in $z \in (0, z^+) \cup (z^-, 0)$, 
there are $u^{\pm} \in [u_0, u_2]$ and functions $w^{\pm} = w^{\pm} (u)$ such that 
\[
\begin{aligned}
 \{ (u (z), w (z)) \ | \ 0 < z < z^+ \}
 & = \{ (u, w^+ (u)) \ | \ q < u < u^+ \}, \\ 
 \{ (u (z), w (z)) \ | \ z^- < z < 0 \}
 & = \{ (u, w^- (u)) \ | \ q < u < u^- \}. 
\end{aligned}
\]
To emphasize the dependency of the parameters, we may write
$u^{\pm} = u^{\pm} (K, c, \mu)$ and $w^{\pm} = w^{\pm} (u; K, c, \mu)$.
By definition, we see that $w^{\pm}$ satisfy (\ref{eq:wueq}), \eqref{eq:wueq2} 
and $\displaystyle \lim_{u \to q} w^{\pm} (u) = 0$.
Moreover, $w^\pm$ are continuously differentiable with respect to $(K, c, \mu)$.

One can similarly define $z^\pm$, $u^{\pm}$ and $w^{\pm}$
for $(K,c) \in \mathcal{D}_1 \cup \mathcal{D}_2$ and $q \in (u_0, u_2)$.
In this case, we have $\pm z^\pm < 0$, $\pm w^{\pm} > 0$, 
$u^{\pm} \in [\tilde u_1, u_0]$, and
\[
\begin{aligned}
 & \{ (u (z), w (z)) \ | \ z^+ < z < 0 \}
 = \{ (u, w^+ (u)) \ | \ u^+ < u < q \}, \\
 & \{ (u (z), w (z)) \ | \ 0 < z < z^- \}
 = \{ (u, w^- (u)) \ | \ u^- < u < q \},
\end{aligned}
\]
where $\tilde u_1$ was given in (\ref{def:tu1}).

%%%%%%%%%%%%%%%%%%%%
%%%%%%%%%%%%%%%%%%%%

\begin{proof}[Proof of Theorem~\ref{thm:ptws}]
 In the same way as in the proof of Lemma~\ref{lem:mwmc}, we easily verify 
 \begin{equation}\label{eq6_lem:prwq}
  \pm (u^{\pm})_\mu > 0.
 \end{equation}
 Let $(K,c) \in \mathcal{D}_{1,1} \cup \mathcal{D}_{1,3}$ and $q \in (u_1,u_0)$.
 It is clear that $(u,w)$ is a periodic solution of (\ref{eq1_TV}) 
 with \eqref{cd:rq} if and only if $u^{+} = u^{-} \in (u_0,u_2)$.
 By the same argument as in the proof of Lemma~\ref{lem:bwmi},
 if $\mu$ (resp. $-\mu$) is large enough for arbitrarily fixed $a \in (q, u_2)$,
 $u^+ > a$ (resp. $u^- > a$).
 Define
 \[
 \begin{aligned}
  & \underline{\mu}^{+} \equiv \inf \{ \mu \in \r \ | \ u^+ > u_0 \},
  & & \overline{\mu}^{-} \equiv \sup \{ \mu \in \r \ | \ u^- > u_0 \}, \\
  & \overline{\mu}^{+} \equiv \inf \{ \mu \in \r \ | \ u^+ = u_2 \},
  & & \underline{\mu}^{-} \equiv \sup \{ \mu \in \r \ | \ u^- = u_2 \}.
 \end{aligned}
 \]
 Then $-\infty \le \underline{\mu}^{+} < \overline{\mu}^{+} < \infty$
 and $-\infty < \underline{\mu}^{-} < \overline{\mu}^{-} \le \infty$.
 By (\ref{eq6_lem:prwq}), we have
 \[
 u^{+} 
 \left\{
 \begin{aligned}
  & \in (u_0,u_2)
  & & \mbox{if} \ \underline{\mu}^{+} < \mu < \overline{\mu}^{+}, \\
  & = u_2
  & & \mbox{if} \ \mu \ge \overline{\mu}^{+},
 \end{aligned}
 \right.
 \]
 \[
 u^{-} 
 \left\{
 \begin{aligned}
  & = u_2 
  & & \mbox{if} \ \mu \le \underline{\mu}^{-}, \\
  & \in (u_0,u_2)
  & & \mbox{if} \ \underline{\mu}^{-} < \mu < \overline{\mu}^{-}.
 \end{aligned}
 \right. 
 \]
 It is therefore sufficient to show that
 \begin{equation}\label{ruqpm}
  \underline{\mu}^{+} < \overline{\mu}^{-},
   \quad \overline{\mu}^{+} > \underline{\mu}^{-}.
 \end{equation}
 The former inequality above
 is shown in the same way as the proof of Lemma~\ref{lem:umom}.
 To derive the latter inequality, we observe that $u^{+} < u^+_1$
 when $u^{+} \in (u_0, u_2)$, which follows from the fact that the orbits 
 $\{ (u, w^+_1(u)) \ | \ u_1 < u < u^+_1 \}$ and 
 $\{ (u, w^{+} (u)) \ | \ q < u < u^{+} \}$ cannot intersect.
 In particular, we have $u^{+} < u^+_1 = u_2$ if $\mu = \mu_b$.
 Hence it follows that $\overline{\mu}^{+} > \mu_b$.
 In a similar manner, we also have $\underline{\mu}^{-}<\mu_f$.
 Since the inequality $\mu_b \ge \mu_f$ holds 
 under the condition $(K,c) \in \mathcal{D}_{1,1} \cup \mathcal{D}_{1,3}$,
 we obtain the latter inequality of \eqref{ruqpm}.
 Therefore we conclude that for each $q$ satisfying (\ref{cd:rq}), 
 there is $\mu_{per}$ such that a periodic solution of (\ref{eq1_TV})
 with $(u(0), w(0)) = (q, 0)$ exists for $\mu = \mu_{per}$.
 The uniqueness of $\mu_{per}$ follows from (\ref{eq6_lem:prwq}).

 In the same way as in the proof of Lemma~\ref{lem:neho},
 we readily see that the condition $(K, c) \in \mathcal{D}_1 \cup \mathcal{D}_2$ 
 is a necessary condition for the existence of a periodic solution 
 of (\ref{eq1_TV}) with (\ref{cd:rq}).
 Finally we prove that there exists no periodic solutions 
 if $\mu \neq \mu_{per}$ in (\ref{eq1_TV}) with (\ref{cd:rq}).
 We first assume that $(K,c) \in \mathcal{D}_{1,1} \cup \mathcal{D}_{1,3}$.
 Let $(u, w)$ be a periodic solution of (\ref{eq1_TV}) with the period $Z > 0$.
 Then there exist $q$ and $z_0$ such that $(u (z_0), w (z_0)) = (q, 0)$.
 If $q$ is not in $[u_1, u_2]$, we see from Lemma~\ref{lem:prf} that
 \[
 \{ (u (z),w (z)) \ | \ z > z_0 \} \subset 
 \left\{
 \begin{aligned}
  & \{ (u, w) \ | \ 0 < u < q, w < 0 \} & & \mbox{ if } \ q < u_1, \\
  & \{ (u, w) \ | \ u > q, w > 0\} & & \mbox{ if } \ q > u_2,
 \end{aligned}
 \right. 
 \]
 contrary to the fact that $(u (z_0 + Z), w (z_0 + Z)) = (q, 0)$.
 Hence we have $q \in (u_1, u_0) \cup (u_0, u_2)$ 
 because $(u, w)$ is not an equilibrium.
 If $q \in (u_1,u_0)$, then we must have $\mu = \mu_per$ by the above argument.
 Hence a periodic solution exists only for $\mu = \mu_{per}$.
 If $q \in (u_0, u_2)$, the orbit $\{ (u (z),w (z)) \ | \ z > z_0 \}$ 
 meets a point $(\tilde q, 0)$ with $\tilde q \in (u_1, u_0)$,
 since otherwise, one could show by Lemma~\ref{lem:prf} that
 \[
 \{ (u (z),w (z)) \ | \ z > z_0 \}
 \subset \{ (u, w) \ | \ u < q, w < 0 \}, 
 \]
 contrary to the fact that $(u (z_0 + Z), w (z_0 + Z)) = (q, 0)$.
 Therefore we again have $\mu = \mu_{per}$.

 We omit the discussion for the other cases
 because the same argument as above can be applied.
 Thus the proof is complete.
\end{proof}

%%%%%%%%%%%%%%%%%%%%
%%%%%%%%%%%%%%%%%%%%

\section{Bifurcations of traveling wave solutions}
\label{sec_bifurcation}

We have discussed several types of traveling wave solutions 
in Sections~\ref{sec:TF}--\ref{sec_Structure of periodic traveling wave solutions}.
It is then natural to investigate connections between them.
In this section, we observe that some of the solutions converge to other solutions
when parameters approach specific values.
This study provides information on the structure of solutions 
in a bifurcation diagram.

To state the results of this section, we introduce some notation.
Let $(u^j_{pul},w^j_{pul})$ be the homoclinic orbit of (\ref{eq1_TV}) 
with \eqref{eq:bctmp0} for $\mu = \mu^j_{pul}$ and $\overline{u} = u_j$
in $j=1, 2$, which is obtained in Theorem~\ref{thm:HOCO}.
Similarly, $(u_{per},w_{per})$ denotes
the solution of (\ref{eq1_TV}) with $(u_{per} (0), w_{per} (0)) = (q, 0)$ 
for $q$ satisfying \eqref{cd:rq} and $\mu = \mu_{per}$
as seen in Theorem~\ref{thm:ptws}.
We define $Z_{per}$ to be the (fundamental) period of $(u_{per},w_{per})$.
Moreover, set
\[
\mathcal{O}^j_{pul} = \{ (u^j_{pul} (z), w^j_{pul} (z)) \ | \ z \in \mathbb{R} \},
\]
\[
\mathcal{O}_{per} = \{ (u_{per} (z), w_{per} (z)) \ | \ 0 \le z < Z_{per} \},
\]
which represent the homoclinic 
and the periodic orbits in the phase plane, respectively.
To emphasize the dependency of the parameters, 
we may write $\mathcal{O}^j_{pul} = \mathcal{O}^j_{pul} (K,c)$ 
and $\mathcal{O}_{per} = \mathcal{O}_{per} (K,c,q)$.
Let $\mathcal{O}_*$ be the heteroclinic cycle in (\ref{eq1_TV}) 
for $(c, \mu) = (c_*, \mu_*)$ consisting of two heteroclinic orbits 
connecting the equilibrium points $(u_1,0)$, $(u_2,0)$.
Note that $\mathcal{O}_* = \mathcal{O}^{+}_1 \cup \mathcal{O}^{-}_1$ 
($= \mathcal{O}^{+}_2 \cup \mathcal{O}^{-}_2$),
where $\mathcal{O}^{\pm}_j$ for $j = 1, 2$ were defined in Section~\ref{sec:TF}.

We also introduce the notion of convergence for sets in $\mathbb{R}^2$.
Let $I \subset \r$ be an interval and let $\tau_0 \in \overline{I}$.
For $A \subset \mathbb{R}^2$ and 
$\{A_\tau\}_{\tau \in I} \subset \mathbb{R}^2$,
the notation $A_\tau \to A$ as $\tau \to \tau_0$ 
is used if $\{ A_\tau \}$ converges to $A$ 
with respect to the Hausdorff distance in $\mathbb{R}^2$ (\cite{MR1335452}), that is, 
\[
\max \left\{ \sup_{a \in A} \inf_{b \in A_\tau} |a-b|, 
\sup_{b \in A_\tau} \inf_{a \in A} |a-b| \right\} \to 0
\ \mbox{ as } \ \tau \to \tau_0.
\]
We note that if $A$ consists of a single point $(\overline{u},\overline{w})$ 
and $A_\tau$ is an orbit $\{ (u^\tau(z), w^\tau(z)) \ | \ z \in \r \}$,
then the convergence of $A_\tau$ to $A$ means that
$(u^\tau(z),w^\tau(z)) \to (\overline{u},\overline{w})$
uniformly for $z \in \mathbb{R}$ as $\tau \to \tau_0$.

The goal of this section is to present two propositions. 
First, we examine the relationship between the homoclinic orbits 
and the heteroclinic cycle (Proposition~\ref{prop:hobp}). 
Next, we show that the periodic orbit converges to the homoclinic orbit
when $q$ approaches the equilibrium point (Proposition~\ref{prop:pobp}).

%%%%%%%%%%%%%%%%%%%%
%%%%%%%%%%%%%%%%%%%%

\begin{proposition}\label{prop:hobp}
 Assume (\ref{eq1_cs}).
 Then, for $j = 1,2$,
 \begin{equation}\label{hobp0}
  \mu^j_{pul} (K,c) \to \mu_*,
   \quad
   \mathcal{O}^j_{pul} (K,c) \to \mathcal{O}_*
   \ \mbox{ as } \ c \to c_*.
 \end{equation}
 Furthermore, there hold
 \begin{equation}\label{hobp1}
  \mu^1_{pul} (K,c) \to 0, 
  \quad \mathcal{O}^1_{pul} (K,c) \to \{ (u_M,0) \} \ \mbox{ as } \ c \to c_M,
 \end{equation}
 \begin{equation}\label{hobp2}
  \mu^2_{pul} (K,c) \to 0,
  \quad \mathcal{O}^2_{pul} (K,c) \to \{ (u_m,0) \} \ \mbox{ as } \ c \to c_m.  
 \end{equation}
\end{proposition}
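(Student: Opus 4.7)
The plan is to treat the three limits in \eqref{hobp0}--\eqref{hobp2} separately. The limit $c \to c_*$ follows from a squeeze between $\mu_f$ and $\mu_b$, while the endpoint limits $c \to c_M, c_m$ reduce to showing that the entire homoclinic orbit shrinks onto the saddle-node point.

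For \eqref{hobp0}, the proof of Theorem~\ref{thm:HOCO}(iv) gives $\mu^1_{pul}(K,c) \in (\mu_f(K,c), \mu_b(K,c))$ for $(K,c) \in \mathcal{D}_{1,1}$ and the symmetric inclusion $\mu^2_{pul}(K,c) \in (\mu_b(K,c), \mu_f(K,c))$ for $(K,c) \in \mathcal{D}_{1,2}$. Since $\mu_b, \mu_f$ are $C^1$ in $c$ with $\mu_b(K, c_*) = \mu_f(K, c_*) = \mu_*$ by Proposition~\ref{prop:BMBF}, the squeeze theorem gives $\mu^j_{pul} \to \mu_*$. For the orbit convergence, observe that $\mathcal{O}^1_{pul}(K,c)$ is the union of graphs of $w^\pm_1(\cdot;K,c,\mu^1_{pul})$ on $[u_1, u^+_1]$, while $\mathcal{O}_* = \mathcal{O}^+_1(K,c_*,\mu_*) \cup \mathcal{O}^-_1(K,c_*,\mu_*)$, where $u^\pm_1 = u_2$ at $(c_*,\mu_*)$ by the defining relations of $\mu_b, \mu_f$. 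Monotonicity of $u^\pm_1$ in $\mu$ (Lemma~\ref{lem:mwmc}) together with continuous dependence of the stable/unstable manifolds of $(u_1,0)$ on $(K,c,\mu)$ forces $u^\pm_1(K,c,\mu^1_{pul}(K,c)) \to u_2$, yielding Hausdorff convergence $\mathcal{O}^1_{pul} \to \mathcal{O}_*$. The argument for $\mathcal{O}^2_{pul}$ is symmetric.

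For \eqref{hobp1}, with \eqref{hobp2} analogous after swapping $(u_1, u_M, c_M)$ with $(u_2, u_m, c_m)$, I would argue as follows. Lemma~\ref{lem:prf}(iii) gives $u_1(c), u_0(c) \to u_M(K)$, so the lower extremum $\underline{m}$ of $u$ on the orbit (equal to $u_1$) tends to $u_M$. Once the upper extremum $\overline{m} \to u_M$ is established, Lemma~\ref{lem:empl} together with $f'(u_M) = 1 - V'(u_M)/K = 0$ immediately yields $\mu^1_{pul} \to 0$, and the Hausdorff convergence $\mathcal{O}^1_{pul} \to \{(u_M, 0)\}$ follows since $w$ is controlled pointwise by \eqref{eq:wueq2} once $u$ is trapped near $u_M$.

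The main obstacle is to prove $\overline{m} \to u_M$. I plan to argue by contradiction: if $\overline{m}(c_n) \ge u_M + \epsilon_0$ along some $c_n \to c_M$, then a uniform $C^0$-bound on $w$ from Lemma~\ref{lem:Wpes} applied to \eqref{eq:wueq2}, together with the bounded containment $\mu^1_{pul} \in (\mu_f, \mu_b)$ (continuous in $c$ up to $c_M$), allows extracting a subsequence with $\mu^1_{pul}(K,c_n) \to \mu_\infty$ and, via Arzelà--Ascoli in the phase plane, a nontrivial connected Hausdorff-limit set $\mathcal{L}$ of the orbits. The set $\mathcal{L}$ is invariant under the limit flow at $(c_M, \mu_\infty)$, contains $(u_M, 0)$, and meets $(\overline{m}_\infty, 0)$ with $\overline{m}_\infty \ge u_M + \epsilon_0$. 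At $c = c_M$ the point $(u_M, 0)$ is a saddle-node (eigenvalues $0$ and $g_2(u_M)\mu_\infty$ by Lemma~\ref{lem:evus}(ii), since $f'(u_M) = 0$), and the normal-form analysis using $f(u) = \tfrac{1}{2} f''(u_M)(u-u_M)^2 + O((u-u_M)^3)$ with $f''(u_M) = -V''(u_M)/K < 0$ (which holds by (A1), because $u_M(K) < U_M$ implies $V''(u_M(K)) > 0$) should rule out such a connected invariant set: the reduced flow on the center manifold takes the form $y_z \approx (g_1 |f''|/(2 g_2 \mu_\infty)) y^2$ with $y = u - u_M$, which is monotone in $y$ and therefore admits no closed loop through the origin; the Hamiltonian case $\mu_\infty = 0$ is excluded separately by the conservation identity $u_z^2/2 = F(u_M) - F(u)$ with $F' = -g_1 f$ strictly increasing past $u_M$, which forbids $u_z = 0$ at any $u > u_M$. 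This local rigidity near the saddle-node is the most delicate step; a quantitative alternative, should the local argument require strengthening, is to combine the orbit identity $\int_{-\infty}^{\infty} g_2(u) h(u,\mu^1_{pul}) u_z^2 \, dz = 0$ with the bound $|f| = O(c_M - c)$ near $u_M$ to force the orbit amplitude to vanish with $c_M - c$.
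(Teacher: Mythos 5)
Your treatment of \eqref{hobp0} coincides with the paper's: the squeeze $\mu_f<\mu^1_{pul}<\mu_b$ (resp.\ $\mu_b<\mu^2_{pul}<\mu_f$) from the proof of Theorem~\ref{thm:HOCO}(iv) plus Proposition~\ref{prop:BMBF} gives $\mu^j_{pul}\to\mu_*$, and continuous dependence of the invariant manifolds gives the orbit convergence. The endgame of \eqref{hobp1}--\eqref{hobp2} is also the paper's: once $\underline{m},\overline{m}\to u_M$, Lemma~\ref{lem:empl} together with $f'(u_M)=0$ (i.e.\ \eqref{rKVp}) forces $\mu^1_{pul}\to 0$.

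The genuine gap is in your proof that $\overline{m}\to u_M$. First, you assume $\mu^1_{pul}(c_n)$ has a finite limit point because it is trapped between $\mu_f$ and $\mu_b$; but $\mu_b$ is only known to be increasing in $c$, so $\lim_{c\to c_M}\mu_b$ could a priori be $+\infty$, and nothing in the paper (or your argument) bounds it. More seriously, your local rigidity argument at the saddle--node does not close. A connected invariant limit set joining $(u_M,0)$ to $(\overline{m}_\infty,0)$ would be a \emph{homoclinic loop to the saddle--node}: for $\mu_\infty>0$ it leaves $(u_M,0)$ along the one-dimensional strong unstable manifold (eigenvalue $g_2(u_M)\mu_\infty>0$) and returns along the attracting side of the center manifold. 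Such a loop does not lie in the center manifold, so monotonicity of the reduced flow $y_z\approx a y^2$ is not violated by its existence --- saddle--node homoclinic loops are precisely the generic objects in this degeneration, and your normal-form computation does not exclude them. Your fallback for $\mu_\infty=0$ is also incorrect: at $\mu=0$ the system is $w_z=g_1f+g_2f'(u)w$, which is not conservative (the friction term $g_2(u)f'(u)w$ does not vanish away from $u_M$), so the identity $u_z^2/2=F(u_M)-F(u)$ does not hold.

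What actually makes the collapse work, and what the paper proves in Lemma~\ref{lem:bucm}, is quantitative: by the monotonicity Lemmas~\ref{lem:mwmu} and \ref{lem:mwc} one reduces to $\mu=0$, and then the Gronwall-type estimate of Lemma~\ref{lem:Wpes} applied to \eqref{eq:wueq2} shows that for $c$ near $c_M$ the upper branch $w_1^+$ satisfies $w_1^+(u)^2\le I(u)+o(1)$ with $I(u)<0$ for $u$ slightly above $u_M$ (both $f(\cdot;c_M)$ and $h(\cdot,0)=f'(\cdot;c_M)$ vanish at $u_M$, so the drift term is quadratically small and dominated by $2g_1f<0$). Hence $u_1^+\to u_M$ whenever $\limsup\mu\le 0$, and symmetrically $u_1^-\to u_M$ whenever $\liminf\mu\ge 0$ --- covering $\mu_\infty=\pm\infty$ as well. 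Since the pulse requires $u_1^+=u_1^-$, one of the two constraints always applies and drags the other branch down with it; this ``one branch is always pinned'' dichotomy is exactly what your limit-set argument misses. I recommend replacing your compactness/normal-form step by this estimate (or by some equally quantitative substitute) before the proof can be considered complete.
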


%%%%%%%%%%%%%%%%%%%%
%%%%%%%%%%%%%%%%%%%%

\begin{proposition}\label{prop:pobp}
 Assume the condition (\ref{eq1_cs}).
 Then there holds
 \begin{equation}\label{pobp1}
  \mu_{per} \to - f' (u_0),
   \ Z_{per} \to \frac{2 \pi}{\omega_0},
   \ \mathcal{O}_{per} \to \{ (u_0, 0) \}
 \end{equation}
 as $q \to u_0$, where $\omega_0$ was given in Lemma~\ref{lem:evus}.
 Moreover,
 \begin{equation}\label{pobp21}
  \mu_{per} \to \mu^1_{pul},
   \ Z_{per} \to \infty, 
   \ \mathcal{O}_{per} \to \mathcal{O}^1_{pul}
   \ \mbox{ as } \ q \to u_1
 \end{equation}
 if $(K,c) \in \mathcal{D}_{1,1}$,
 \begin{equation}\label{pobp22}
  \mu_{per} \to \mu^2_{pul},
   \ Z_{per} \to \infty, 
   \ \mathcal{O}_{per} \to \mathcal{O}^2_{pul}
   \ \mbox{ as } \ q \to u_2
 \end{equation}
 if $(K,c) \in \mathcal{D}_{1,2} \cup \mathcal{D}_2$,
 and 
 \begin{equation}\label{pobp23}
  \mu_{per} \to \mu_*,
   \ Z_{per} \to \infty, 
   \ \mathcal{O}_{per} \to \mathcal{O}_*
   \ \mbox{ as } \ q \to u_1, u_2
 \end{equation}
 if $(K,c) \in \mathcal{D}_{1,3}$.
\end{proposition}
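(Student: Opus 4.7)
The plan is to treat three regimes separately: the shrinking limit $q \to u_0$ in \eqref{pobp1}, the simple homoclinic limits \eqref{pobp21}--\eqref{pobp22}, and the heteroclinic-cycle limit \eqref{pobp23} in $\mathcal{D}_{1,3}$. For \eqref{pobp1}, the key observation from Lemma~\ref{lem:evus}(ii) is that the equilibrium $(u_0,0)$ is a linearized focus whenever $\mu \neq -f'(u_0)$, which precludes small periodic orbits through $(q,0)$ with $q$ close to $u_0$. First I would establish boundedness of $\mu_{per}(q)$ as $q \to u_0$ from the defining identity $u^+ = u^-$ used in Theorem~\ref{thm:ptws}; extracting a convergent subsequence $\mu_n \to \mu_\infty$, the existence of periodic orbits accumulating on $(u_0,0)$ then forces $\mu_\infty = -f'(u_0)$, and the uniqueness in Theorem~\ref{thm:ptws} promotes this to convergence of the whole family. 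The period estimate $Z_{per}\to 2\pi/\omega_0$ follows from continuous dependence on parameters and the fact that for $\mu=-f'(u_0)$ the linearization about $(u_0,0)$ has eigenvalues $\pm i\omega_0$; the shrinking $\mathcal{O}_{per}\to\{(u_0,0)\}$ is automatic since the diameter of $\mathcal{O}_{per}$ is controlled by $|q-u_0|$.

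For \eqref{pobp21} (and symmetrically \eqref{pobp22}), fix a sequence $q_n \to u_1$ in $(u_1,u_0)$ and set $\mu_n = \mu_{per}(K,c,q_n)$. Boundedness of $\{\mu_n\}$ I would derive by adapting Lemma~\ref{lem:bwmi}: if $|\mu_n|\to\infty$, the blow-up estimates there force the return points $u^\pm$ to equal $u_2$, contradicting $u^+=u^-\in(u_0,u_2)$. Passing to a subsequential limit $\mu_n\to\mu_\infty$, the orbits $\mathcal{O}_{per}(K,c,q_n)$ are confined in a fixed compact subset of $\{u>0\}$ and pass arbitrarily close to the saddle $(u_1,0)$. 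A Hausdorff-limit argument, together with the analysis of the stable and unstable manifold branches of $(u_1,0)$ from Section~\ref{sec:TF}, identifies the limit as an invariant set containing $(u_1,0)$ and coinciding with $\mathcal{O}^1_{pul}$. The uniqueness in Theorem~\ref{thm:HOCO}(iv) then gives $\mu_\infty=\mu^1_{pul}$, and the divergence $Z_{per}\to\infty$ follows from the standard fact that traversal time across a fixed neighborhood of a hyperbolic saddle grows like $|\log d|$, where $d$ is the minimal distance of the orbit to the saddle, and $d\to 0$ as $q_n\to u_1$.

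The main obstacle and most delicate case will be \eqref{pobp23} in $\mathcal{D}_{1,3}$: here $\mu_b=\mu_f=\mu_*$, and Theorem~\ref{thm:HOCO}(iii) rules out genuine homoclinic orbits to either $(u_1,0)$ or $(u_2,0)$ individually, so the periodic orbit must pass close to \emph{both} saddles, and a naive subsequential limit could in principle collapse to only a single heteroclinic arc rather than the full cycle. I would handle this by tracking both ends of the orbit simultaneously: as $q\to u_1$, the upper arc of $\mathcal{O}_{per}$ must approach an unstable-manifold branch of $(u_1,0)$, and the limit $\mu_\infty$ must allow this branch to close back onto the orbit; since any such connection actually runs to $(u_2,0)$ (by $\mu_b=\mu_*$ and the uniqueness in Theorem~\ref{thm:HECO}), the periodic orbit must additionally pass close to $(u_2,0)$ and return via the corresponding manifold branch, forcing $\mu_\infty=\mu_*$ and the full limit $\mathcal{O}_{per}\to\mathcal{O}_*$. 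An analogous argument handles $q\to u_2$, and the logarithmic-time estimate near both saddles once again yields $Z_{per}\to\infty$.
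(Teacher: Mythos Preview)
Your plan is sound and would work, but the paper's route for the homoclinic limits \eqref{pobp21}--\eqref{pobp23} is sharper and worth knowing. For \eqref{pobp1} your argument is essentially the paper's: the focus/node dichotomy of Lemma~\ref{lem:evus}(ii) forces $\mu_{per}\to -f'(u_0)$, and the linearized rotation gives the period.

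For \eqref{pobp21} the paper does \emph{not} argue by compactness and Hausdorff-limit identification. Instead it first proves a convergence lemma (Lemma~\ref{lem:cwq}): $\mathcal{O}^\pm \to \overline{\mathcal{O}^\pm_1}$ and $u^\pm \to u^\pm_1$ \emph{locally uniformly in $\mu$} as $q\to u_1$, via Hartman--Grobman near the saddle. Then, assuming $\mu_{per}^{q_n}\ge \mu^1_{pul}+\e_0$ along a subsequence, the monotonicity \eqref{eq6_lem:prwq} gives the sandwich
\[
u^{q_n,+}(\mu^1_{pul}+\e_0)\;\le\; u^{q_n,+}(\mu_{per}^{q_n})\;=\;u^{q_n,-}(\mu_{per}^{q_n})\;\le\; u^{q_n,-}(\mu^1_{pul}+\e_0),
\]
evaluated at the \emph{fixed} parameter $\mu^1_{pul}+\e_0$. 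Passing to the limit with Lemma~\ref{lem:cwq} yields $u^+_1(\mu^1_{pul}+\e_0)\le u^-_1(\mu^1_{pul}+\e_0)$, which contradicts Lemma~\ref{lem:mwmc}. This bypasses both the boundedness step you propose and the delicate identification of the Hausdorff limit; orbit convergence $\mathcal{O}_{per}\to\mathcal{O}^1_{pul}$ then follows directly from Lemma~\ref{lem:cwq} once $\mu_{per}\to\mu^1_{pul}$ is known.

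The payoff is that \eqref{pobp23}, which you single out as the most delicate case, requires no separate two-saddle analysis: the same sandwich argument with $\mu_*$ in place of $\mu^1_{pul}$ works verbatim, because in $\mathcal{D}_{1,3}$ one has $u^+_1(\mu_*)=u^-_1(\mu_*)=u_2$ (from $\mu_b=\mu_f=\mu_*$), and Lemma~\ref{lem:mwmc} again furnishes the contradiction at $\mu_*\pm\e_0$. The limit $\mathcal{O}_{per}\to\mathcal{O}_*$ is then automatic since $\overline{\mathcal{O}^+_1}\cup\overline{\mathcal{O}^-_1}=\mathcal{O}_*$ at $\mu=\mu_*$. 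Your approach would also go through, but the monotonicity route is shorter and treats all three limits uniformly.
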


%%%%%%%%%%%%%%%%%%%%
%%%%%%%%%%%%%%%%%%%%

\begin{remark}
 \begin{itemize}
  \item[(i)]
	    Bifurcations of homoclinic and heteroclinic orbits are observed:
	    \eqref{hobp0} indicates that the homoclinic orbits 
	    $\mathcal{O}^1_{pul}$ and $\mathcal{O}^2_{pul}$ 
	    bifurcate from the heteroclinic cycle $\mathcal{O}_*$;
	    \eqref{pobp21} indicates that the periodic orbit $\mathcal{O}_{per}$ 
	    becomes the homoclinic orbit $\mathcal{O}^j_{pul}$ 
	    or the heteroclinic cycle $\mathcal{O}_*$
	    when the initial value $q$ approaches $u_j$.
	    A Hopf bifurcation is also observed:
	    \eqref{pobp1} shows that $\mathcal{O}_{per}$ bifurcates from $(u_0,0)$.
  \item[(ii)]
	     We note that the equalities
	     \begin{equation}\label{rKVp}
	      K=V'(u_M)=V'(u_m)
	     \end{equation}
	     hold since $u_M$ and $u_m$ are critical points 
	     of the function $K u - V (u)$.
	     These with \eqref{luacm} and \eqref{lubcm}
	     show that $f' (u_0) (= - \lim_{q \to u_0} \mu_{per})$ 
	     converges to $0$ as $c \to c_M, c_m$.
	     Therefore (\ref{hobp1}) and (\ref{hobp2}) imply that
	     a Bogdanov-Takens bifurcation (\cite{MR2071006})
	     occurs at $(c, \mu) = (c_M, 0), (c_m, 0)$.
	     We emphasize that the propositions yield information 
	     on not a local bifurcation diagram but a global one;
	     the proofs will be done without using the theory of local bifurcations.
 \end{itemize}
\end{remark}

%%%%%%%%%%%%%%%%%%%%
%%%%%%%%%%%%%%%%%%%%

\subsection{Proof of Proposition~\ref{prop:hobp}}

In the following proofs of this subsection, we ignore the dependence on $K$ 
in order to simplify notation.
Before the proof of Proposition~\ref{prop:hobp}, 
we examine the behavior of $u^\pm_1$ and $w^\pm_1$ 
(resp. $u^\pm_2$ and $w^\pm_2$) as $c \to c_M$ (resp. $c \to c_m$).

%%%%%%%%%%%%%%%%%%%%
%%%%%%%%%%%%%%%%%%%%

\begin{lemma}\label{lem:bucm}
 Give $\mu^+_\infty \in [-\infty,0]$ and $\mu^-_\infty \in [0,\infty]$ arbitrarily.
 Fix $K > 0$.
 If $(c, \mu)$ converges to $(c_M, \mu^\pm_\infty)$ and satisfies 
 $(K, c) \in \mathcal{D}_1$, then $u^\pm_1 (K, c, \mu) \to u_M (K)$ 
 and $w^\pm_1 (u; K, c, \mu) \to 0$ uniformly in $u$ under the limit.
 Similarly, if $(K, c) \in \mathcal{D}_1 \cup \mathcal{D}_2$ 
 and $(c, \mu) \to (c_m, \mu^\pm_\infty)$, then $u^\pm_2 (K, c, \mu) \to u_m (K)$ 
 and $w^\pm_2 (u; K, c, \mu) \to 0$ uniformly in $u$.
\end{lemma}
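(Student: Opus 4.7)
The plan is to reduce the lemma to two sub-claims: (i) $w_1^+(u) \to 0$ uniformly on $[u_1, u_0]$, and (ii) $u_1^+ \to u_M$. Once (i) and (ii) hold, uniform smallness of $w_1^+$ on the full domain $[u_1, u_1^+]$ follows (re-applying Lemma~\ref{lem:Wpes} on the shrinking interval $[u_0, u_1^+]$, or exploiting monotone decrease of $w_1^+$ past $u_0$). The statements for $w_1^-$ and for $w_2^\pm$ follow by symmetric arguments with the sign hypothesis on $\mu$ reversed and, in the latter case, the coalescence occurring at $(u_m, 0)$ where again $f'(u_m) = 0$.

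For (i), I exploit the sign hypothesis $\mu \le 0$: since $w_1^+ \ge 0$ and $g_2 > 0$, the identity
\[
 ((w_1^+)^2)_u = 2g_1 f + 2g_2 f' w_1^+ + 2g_2 \mu w_1^+
\]
yields the $\mu$-independent inequality $((w_1^+)^2)_u \le 2g_1 f + 2g_2 f' w_1^+$. The proof of Lemma~\ref{lem:Wpes} adapts verbatim to this differential inequality (using only $|BW| \le B^2/4 + W^2$), giving
\[
 w_1^+(u)^2 \le e^{u - u_1} \int_{u_1}^u e^{u_1 - s}\bigl( 2g_1(s) f(s) + g_2(s)^2 f'(s)^2 \bigr)\, ds.
\]
On $[u_1, u_0]$, $|f| \le (c_M - c)/K$ and $|f'| \to 0$ uniformly (since $u_1, u_0 \to u_M$ and $f'(u_M) = 0$), so the right-hand side is $O(u_0 - u_1) \to 0$ by \eqref{luacm}. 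In particular $w_1^+(u_0) \to 0$.

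For (ii), I argue by contradiction: suppose $(c_n, \mu_n) \to (c_M, \mu^+_\infty)$ yet $u_1^+(c_n, \mu_n) \to u^* > u_M$. If $\mu^+_\infty$ is finite, parameterize each orbit by $z$ and shift time so $u^{(n)}(0) = (u_M + u^*)/2$. Continuous dependence yields a subsequence converging locally uniformly to a limit orbit $(u^\infty, w^\infty)$ of the limit system at $(c_M, \mu^+_\infty)$, with $w^\infty \ge 0$, $u^\infty > u_M$, $w^\infty(0) > 0$, and $(u^\infty(z), w^\infty(z)) \to (u_M, 0)$ as $z \to -\infty$ (using $\lambda_+(u_1(c_n)) \to 0$ for $\mu^+_\infty \le 0$, which forces unbounded dwell time near $(u_1(c_n), 0)$). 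On the portion where $u^\infty \in (u_M, u_m)$, both $f(\cdot\,; c_M) \le 0$ and $f' + \mu^+_\infty \le 0$ (since $f' \le 0$ on $(u_M, u_m)$), hence $((w^\infty)^2)_u \le 0$. Going backward in $z$, this forces $(w^\infty)^2$ to stay bounded below by a positive value, contradicting $w^\infty \to 0$. If $\mu^+_\infty = -\infty$, I argue directly: for $\mu$ sufficiently negative, $h(u, \mu) < 0$ uniformly on $[u_0, u_2]$ with $|g_2 h| \ge \gamma |\mu|$ for some constant $\gamma > 0$; since $f \le 0$ on $[u_0, u_1^+]$, the inequality $(w_1^+)_u = g_1 f/w_1^+ + g_2 h \le g_2 h$ integrates to $u_1^+ - u_0 \le w_1^+(u_0)/(\gamma |\mu|) \to 0$.

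The analogous assertions for $w_1^-$ (with $\mu^-_\infty \ge 0$) and for $w_2^\pm$ (with $c \to c_m$) follow by the same two-step scheme, with the sign hypotheses chosen so that the corresponding phase-plane monotonicity holds at the coalescing equilibria. The main obstacle is sub-claim (ii): the energy estimate of step (i) loses its decisive content past $u_0$, so ruling out a nontrivial limit orbit requires the degenerate structure at $(u_M, 0)$ in the limit system together with the sign restriction on $\mu$. Without the hypothesis $\mu^+_\infty \le 0$, the limit orbit could escape to $(u_2(c_M), 0)$ along a heteroclinic connection, and the lemma would fail.
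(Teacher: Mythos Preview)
Your two-step scheme is correct and gives a valid proof, but it diverges from the paper's route. The paper first invokes the monotonicity Lemmas~\ref{lem:mwmu} and \ref{lem:mwmc} to reduce to the single case $\mu=0$: since $u_0(c)\le u_1^+(c,\mu)\le u_1^+(c,0)$ and $0\le w_1^+(u;c,\mu)\le w_1^+(u;c,0)$ for all $\mu\le 0$, it suffices to treat $\mu=0$ and sandwich. It then proceeds by pure integral estimates: after bounding $w_1^+(u_0)$ via Lemma~\ref{lem:Wpes} (your step~(i)), it integrates \eqref{eq:wueq} and then \eqref{eq:wueq2} on $[u_0,u_1^+]$ to obtain an explicit upper bound for $w_1^+(u)^2$ whose limit $I(u)$ as $c\to c_M$ is shown to be strictly negative for $u$ just above $u_M$, forcing $u_1^+\to u_M$ directly without any compactness or limit-orbit argument. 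Your approach instead keeps $\mu$ general, uses the sign $\mu\le 0$ to drop a term and get a $\mu$-uniform energy bound, and then for step~(ii) passes to a limiting orbit (finite $\mu^+_\infty$) or uses a crude slope bound ($\mu^+_\infty=-\infty$). The paper's argument is more elementary and entirely self-contained within the integral-inequality framework already set up; yours leans on dynamical-systems machinery but makes transparent why the sign hypothesis on $\mu^+_\infty$ is essential. Two points to tighten in your version: the shift point should be chosen in $(u_M,\min(u^*,u_m))$ rather than $(u_M+u^*)/2$ so that $((w^\infty)^2)_u\le 0$ is guaranteed on the entire backward portion; and the convergence $(u^\infty,w^\infty)(-\infty)=(u_M,0)$ follows cleanly not from the eigenvalue heuristic but from the fact that the bounded monotone limit orbit must accumulate on an equilibrium of the limiting system, and $(u_M,0)$ is the only one in range.
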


%%%%%%%%%%%%%%%%%%%%
%%%%%%%%%%%%%%%%%%%%

\begin{proof}
 We may represent $w^+_1 (u; c, \mu)$ by $w^+_1 (u)$ for simplicity.
 In order to show the assertion for $u_1^+$ and $w_1^+$,
 it is sufficient to consider only the case $\mu^+_\infty = 0$
 because it follows from Lemmas~\ref{lem:mwmu} and \ref{lem:mwmc} that 
 \[
 u_0 (c) \le u^+_1 (c,\mu) \le u^+_1 (c, 0),
 \quad 0 \le w^+_1(u; c,\mu) \le w^+_1 (u; c, 0)
 \]
 if $\mu \le 0$.
 Applying Lemma~\ref{lem:Wpes}
 with $W=w^+_1$, $A=2g_1 f $, $B=2g_2h(\cdot,\mu)$, $s_1=u_1$ and $s_2=u$ gives
 \begin{equation}\label{wpoea}
  w^+_1(u)^2 \le \int_{u_1}^{u} e^{u-s} 
   \left(2 g_1 (s) f (s) + g_2 (s)^2 h (s,\mu)^2 \right) ds
 \end{equation}
 for $u \in [u_1, u^+_1]$.
 From this and \eqref{luacm}, we particularly have
 \begin{equation}\label{wpol}
  w^+_1 (u_0; c, \mu) \to 0 \ \mbox{ as } \ (c,\mu) \to (c_M, 0).
 \end{equation}
 Integrating \eqref{eq:wueq} over $[u_0,u]$ 
 and using the fact that $f \le 0$ on $[u_0,u_2]$ yield
 \[
 w^+_1 (u) \le w^+_1 (u_0) +\int_{u_0}^{u} g_2 (s) |h (s,\mu)| ds
 \]
 for $u \in [u_0,u^+_1]$.
 Furthermore, integrating \eqref{eq:wueq2} over $[u_0,u]$ 
 and then plugging the above inequality into the result, we deduce that 
 \begin{equation}\label{wpoe}
  \begin{aligned}
   \frac{1}{2}w^+_1 (u)^2
   & = \frac{1}{2}w^+_1 (u_0)^2 
   + \int_{u_0}^{u} (g_1 (s) f (s) + g_2 (s) h (s,\mu) w^+_1 (s)) ds \\
   & \le \frac{1}{2}w^+_1 (u_0)^2 + \int_{u_0}^{u} g_1(s)f(s) ds
   + \int_{u_0}^{u} g_2(s)|h(s,\mu)|
   \left( w^+_1 (u_0) + \int_{u_0}^{s} g_2 (\tau) |h (\tau,\mu)| d\tau \right) ds   
  \end{aligned}
 \end{equation}
 for $u \in [u_0,u^+_1]$.

 Define $u^* \equiv \limsup_{(c, \mu) \to (c_M, 0)} u_1^+ (c, \mu)$.
 By \eqref{luacm} and \eqref{wpol},
 we see that the right-hand side of (\ref{wpoe}) converges to 
 \[
 \begin{aligned}
  I (u) 
  \equiv \int_{u_M}^{u} g_1(s) f(s; c_M) ds
  + \int_{u_M}^{u} g_2 (s) | h ( s, 0 ) |
  \left( \int_{u_M}^{s} g_2 (\tau) |h ( \tau, 0)| d \tau \right) ds   
 \end{aligned}
 \]
 for each $u \in [u_M, u^*]$ as $(c, \mu) \to (c_M, 0)$,
 where we used the notation $f (u; c)$ to emphasize $c$-dependency of $f$.
 Recall that $f (u_M; c_M) = f' (u_M; c_M) = 0$ 
 and $h (u, 0) = f' (u; c) < 0$ for $u \in (u_M, u_m)$.
 Then $I (u)$ is estimated as
 \begin{equation}\label{eq7_Iu}
   I(u) \le \int_{u_M}^{u} ( g_1 (s) + C f'(s; c_M) ) f (s;c_M) ds
 \end{equation}
 for $u \in [u_M, \min \{u^*, u_m\}]$, where $C > 0$ is some constant.
 Then $u^*$ must be equal to $u_M$.
 Otherwise, since $f'(u; c_M)$ is sufficiently small, the integral on 
 the right-hand side of (\ref{eq7_Iu}) is negative for $u$ close to $u_M$, 
 which contradicts the fact that the left-hand side of (\ref{wpoe}) is nonnegative.
 Therefore $u^+_1(c, \mu) \to u_M$ as $(c, \mu) \to (c_M, 0)$.
 From this and \eqref{wpoea}, we also obtain the uniform convergence of $w^+_1$ to $0$.

 The remainder of the lemma can be shown in the same way as above.
 So we omit the details of the proofs.
\end{proof}

%%%%%%%%%%%%%%%%%%%%
%%%%%%%%%%%%%%%%%%%%

\begin{proof}[Proof of Proposition~\ref{prop:hobp}]
 Recall that $\mu_f<\mu^1_{pul}<\mu_b$ and $\mu_b<\mu^2_{pul}<\mu_f$,
 which were shown in the proof of Theorem~\ref{thm:HOCO}.
 We hence have $\mu^j_{pul} \to \mu_*$ as $c \to c_*$.
 This with the continuous dependence of stable and unstable manifolds on parameters
 gives the convergence of $\mathcal{O}^j_{pul}$ to $\mathcal{O}_*$.
 We have therefore proved \eqref{hobp0}.

 Let us show \eqref{hobp1}.
 It suffices to verify that $\mu^1_{pul} \to 0$ as $c \to c_M$.
 Indeed, if this is true, it is shown that $\mathcal{O}^1_{pul} \to \{ (u_M, 0) \}$
 from Lemma~\ref{lem:bucm} and the fact that
 \[
 \begin{aligned}
  \mathcal{O}^1_{pul} 
  = \{ (u, w^+_1 (u; c,\mu^1_{pul})) \ | \ u \in (u_1, u^+_1] \}
  \cup \{ (u,w^-_1(u; c, \mu^1_{pul})) \ | \ u \in (u_1, u^-_1] \}.   
 \end{aligned}
 \]
 Let $\{ c_n \}_n$ be any sequence such that $\lim_{n \to \infty} c_n = c_M$ and
 \[
 \mu_\infty \equiv \lim_{n \to \infty} \mu^1_{pul} (c_n) \in [-\infty,\infty]
 \]
 exists.
 We prove $\mu_\infty = 0$.
 We apply Lemma~\ref{lem:bucm} for $\mu_\infty = \mu^+_\infty$ if $\mu_\infty \le 0$
 and for $\mu_\infty = \mu^-_\infty$ if $\mu_\infty \ge 0$.
 In either case, we have 
 \begin{equation}\label{mune}
  u^n_1 \equiv u^+_1 (c_n, \mu^1_{pul} (c_n)) = u^-_1 (c_n, \mu^1_{pul} (c_n)) \to u_M
 \end{equation}
 as $n \to \infty$.
 We now use the inequalities
 \[
 - \sup_{u \in (u_1,u^n_1)} f'(u)
 < \mu^1_{pul}(c_n) < - \inf_{u \in (u_1,u^n_1)} f'(u), 
 \]
 which follows from Lemma~\ref{lem:empl}.
 From \eqref{luacm}, \eqref{rKVp}, and \eqref{mune}, 
 we find that both the left-hand and the right-hand sides 
 converge to $0$ as $n \to \infty$, which leads to $\mu_\infty = 0$.
 We can derive \eqref{hobp2} in a similar way, and thus the proof is complete.
\end{proof}

%%%%%%%%%%%%%%%%%%%%
%%%%%%%%%%%%%%%%%%%%

\subsection{Proof of Proposition~\ref{prop:pobp}}

We define orbits $\mathcal{O}^{\pm}$ by
\[
\mathcal{O}^{\pm} \equiv 
\left\{
\begin{aligned}
 & \{ (u, w^{\pm}(u)) \ | \ u \in [q, u^{\pm}] \}
 & & \mbox{if } \ q \in (u_1,u_0), \\
 & \{ (u, w^{\pm}(u)) \ | \ u \in [u^{\pm}, q] \}
 & & \mbox{if } \ q \in (u_0,u_2).
\end{aligned}
\right.
\]
%

%%%%%%%%%%%%%%%%%%%%
%%%%%%%%%%%%%%%%%%%%

\begin{lemma}\label{lem:cwq}
 For $j=1, 2$, there hold $\mathcal{O}^{\pm} \to \overline{\mathcal{O}^{\pm}_j}$
 locally uniformly in $\mu \in \mathbb{R}$ as $q \to u_j$,
 where $\mathcal{O}^{\pm}_j$ were defined in (\ref{eq3_Opmj}).
 In particular, $u^{\pm} \to u^{\pm}_j$ as $q \to u_j$.
\end{lemma}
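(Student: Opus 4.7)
The plan is to treat (\ref{eq1_TV}) as a smooth planar ODE system and derive the Hausdorff convergence $\mathcal{O}^\pm \to \overline{\mathcal{O}^\pm_j}$ from continuous dependence of the flow on initial conditions, combined with the hyperbolic saddle structure at $(u_j,0)$ established in Lemma~\ref{lem:evus}(i). I focus on $\mathcal{O}^+ \to \overline{\mathcal{O}^+_1}$ as $q \to u_1$ with $(K,c) \in \mathcal{D}_1$; the remaining cases ($q \to u_2$, and those with $(K,c) \in \mathcal{D}_2$) follow by symmetric arguments.

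First, I would fix an intermediate value $\alpha \in (u_1, u^+_1)$ and consider the transverse section $\Sigma_\alpha \equiv \{(\alpha, w) \ | \ w > 0 \}$. The orbit $\mathcal{O}^+_1$ crosses $\Sigma_\alpha$ at the single point $(\alpha, w^+_1(\alpha;\mu))$. The trajectory starting at $(q,0)$ enters $S_+$, reaches $\Sigma_\alpha$ at some finite time, and I claim this crossing point converges to $(\alpha, w^+_1(\alpha;\mu))$ as $q \to u_1$, uniformly in $\mu$ on compact sets. The point $(q,0)$ lies on the $w=0$ axis, which is transverse to both the stable and unstable eigendirections at $(u_1,0)$ (having slopes $\lambda_-(u_1)<0<\lambda_+(u_1)$); in particular $(q,0)$ does not lie on the local stable manifold of $(u_1,0)$ for $q$ near $u_1$. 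The inclination ($\lambda$-) lemma for hyperbolic saddles then gives that the forward orbit of $(q,0)$ tracks the local unstable manifold, hitting $\Sigma_\alpha$ arbitrarily close to $\overline{\mathcal{O}^+_1} \cap \Sigma_\alpha$ as $q \to u_1$. Smooth dependence of the saddle eigenvalues and invariant manifolds on $\mu$ delivers the uniformity in $\mu$.

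Once the crossing point on $\Sigma_\alpha$ converges, standard continuous dependence of the flow over the finite remaining time until the orbit exits $S_+$ shows that $\mathcal{O}^+ \cap \{u \ge \alpha\} \to \mathcal{O}^+_1 \cap \{u \ge \alpha\}$ in Hausdorff distance, and also $u^+ \to u^+_1$. To handle the portion $\{u \le \alpha\}$, I would argue that both arcs have diameter that shrinks with $\alpha \to u_1$: for $\mathcal{O}^+_1$ the arc lies in a parabolic region tangent to the unstable eigenvector at $(u_1,0)$; for $\mathcal{O}^+$, applying Lemma~\ref{lem:Wpes} on $[q,\alpha]$ with $W=w^+$, $s_1=q$, $s_2=\alpha$ gives $w^+(\alpha)^2 = O(\alpha - u_1)$ uniformly in $q \in (u_1,\alpha)$, since $f(u)=O(u-u_1)$ and $h(\cdot,\mu)$ is bounded there. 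Sending first $q \to u_1$ and then $\alpha \to u_1$ yields the full Hausdorff convergence $\mathcal{O}^+ \to \overline{\mathcal{O}^+_1}$.

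The main technical obstacle is making the appeal to the inclination lemma precise and uniform in $\mu$. A more self-contained alternative is to introduce local $C^1$ coordinates near $(u_1,0)$ that straighten the invariant manifolds and to track the unstable-to-stable component ratio along the orbit of $(q,0)$: the unstable component grows like $e^{\lambda_+ z}$ while the stable component decays like $e^{\lambda_- z}$, so after time $O(\log(1/(q-u_1)))$ the orbit is exponentially close to the unstable manifold. The only nontrivial point here is to verify that $(q,0)$ has a nontrivial unstable component, which amounts to $(q,0)$ not lying on the local stable manifold; this is immediate from transversality of the stable manifold to the $w=0$ axis at the saddle for $q$ sufficiently close to $u_1$.
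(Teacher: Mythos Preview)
Your approach is essentially the same as the paper's: split the analysis into a local part near the saddle $(u_j,0)$ handled by hyperbolic dynamics, and a global part handled by continuous dependence of the flow. The paper's proof is in fact only two sentences---it invokes the Hartman--Grobman theorem for the local part and declares the rest ``standard''---so you are supplying considerably more detail than the paper does, but along the same line. Where the paper cites Hartman--Grobman, you use the $\lambda$-lemma (or an explicit unstable/stable component argument); these are interchangeable for the purpose at hand. Your use of Lemma~\ref{lem:Wpes} to bound the arc $\{u\le\alpha\}$ is a nice explicit substitute for what the paper leaves implicit.

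One point worth tightening: your phrase ``continuous dependence over the finite remaining time until the orbit exits $S_+$'' tacitly assumes that the exit time from $\Sigma_\alpha$ is bounded uniformly in $q$, which fails precisely when $u^+_1=u_2$ (i.e., at $\mu=\mu_b$), since then $\mathcal{O}^+_1$ is heteroclinic and the nearby orbits linger near the second saddle. This does not break the Hausdorff convergence, but the clean way to handle it is to argue with the graphs $w^+(u)$ via \eqref{eq:wueq2} rather than with the $z$-parametrization: continuous dependence for \eqref{eq:wueq2} on $[\alpha,u_2-\delta]$ gives $w^+(u;q)\to w^+_1(u)$ there, and the remaining arc over $[u_2-\delta,u^+]$ is uniformly small because $0<w^+(u;q)<w^+_1(u)\le w^+_1(u_2-\delta)$. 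The paper's proof is equally silent on this case.
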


%%%%%%%%%%%%%%%%%%%%
%%%%%%%%%%%%%%%%%%%%

\begin{proof}
 The convergence of $\mathcal{O}^{\pm}$ to $\overline{\mathcal{O}^{\pm}_j}$ 
 in a neighborhood of the equilibrium $(u_j, 0)$ 
 follows from the Hartman-Grobman theorem.
 The proof for the convergence away from equilibria is then standard.
\end{proof}

%%%%%%%%%%%%%%%%%%%%
%%%%%%%%%%%%%%%%%%%%

Let us conclude this section by showing Proposition~\ref{prop:pobp}.

%%%%%%%%%%%%%%%%%%%%
%%%%%%%%%%%%%%%%%%%%

\begin{proof}[Proof of Proposition~\ref{prop:pobp}]
 From (ii) of Lemma~\ref{lem:evus}, we see that no periodic orbit exists 
 in a neighborhood of the equilibrium $(u_0,0)$ provided $\mu \neq - f'(u_0)$.
 Hence $\mu_{per}$ must converge to $- f'(u_0)$ as $q \to u_0$.
 By the same argument as in the proof of Theorem~\ref{thm:HOCO} (ii),
 the behavior of $(u_{per}, w_{per})$ is approximated by 
 \[
 (u_0, 0) + (q - u_0) (\cos \omega_0 z, - \omega_0 \sin \omega_0 z) 
 \]
 uniformly in $z$ if $q$ is close to $u_0$.
 This implies that $(u_{per}, w_{per}) \to (u_0,0)$ 
 and $Z_{per} \to 2 \pi / \omega_0$ as $q \to u_0$ from the representation above.
 Therefore \eqref{pobp1} holds.

 Let us proceed to the proof of \eqref{pobp21}.
 We consider the case $(K, c) \in \mathcal{D}_{1,1}$.
 To emphasize the dependency of $q$, 
 we may write $\mu_{per} = \mu_{per}^q$ and $u^{\pm} = u^{q, \pm}$.
 On the other hand, we ignore the dependence of $u^\pm_1$ and $u^{q, \pm}$ 
 on $(K,c)$ and write $u^\pm_1 (\mu)$ and $u^{q, \pm} (\mu)$ 
 instead of $u^\pm_1 (K, c, \mu)$ and $u^{q, \pm} (K, c, \mu)$ for simplicity.
 We show that $\mu_{per}^q \to \mu^1_{pul}$ as $q \to u_1$ by contradiction.
 If this is false, we can take a constant $\e_0 > 0$ 
 and a sequence $\{ q_n \}_n$ which satisfies
 $q_n \to u_1$ as $n \to \infty$ and 
 $|\mu_{per}^{q_n} - \mu^1_{pul}| \ge \e_0$ for all $n$.
 Let us consider the case that $\mu_{per}^{q_n} \ge \mu^1_{pul} + \e_0$ 
 for infinitely many $n$.
 By (\ref{eq6_lem:prwq}), we have
 \[
 u^{q_n, +} (\mu^1_{pul}+\e_0) \le u^{q_n, +} (\mu_{per}^{q_n})
 = u^{q_n, -} (\mu_{per}^{q_n}) \le u^{q_n, -} (\mu^1_{pul} + \e_0). 
 \]
 Hence letting $n \to \infty$ and using Lemma~\ref{lem:cwq} yield
 $u^+_1 (\mu^1_{pul} + \e_0) \le u^-_1 (\mu^1_{pul} + \e_0)$.
 However it follows from Lemma~\ref{lem:mwmc} that 
 \[
 u^+_1 (\mu^1_{pul} + \e_0) > u^+_1 (\mu^1_{pul})
 = u^-_1 (\mu^1_{pul}) > u^-_1(\mu^1_{pul} + \e_0),
 \]
 which is a contradiction.
 We can deal with the other case in the same way as above.

 The convergence of $\mathcal{O}_{per}$ to $\mathcal{O}^1_{pul}$
 is verified by combining Lemma~\ref{lem:cwq} 
 and the fact that $\mu_{per}^q \to \mu^1_{pul}$ as $q \to u_1$.
 Furthermore, since the initial value $(u_{per} (0), w_{per} (0)) = (q, 0)$ 
 approaches the equilibrium $(u_1, 0)$,
 it is easy to see that $Z_{per}$ diverges to $\infty$.
 We have thus proved \eqref{pobp21}, and the proof is complete.
\end{proof}

%%%%%%%%%%%%%%%%%%%%
%%%%%%%%%%%%%%%%%%%%

\section{Numerical continuation of traveling wave solutions}
\label{Numerical results}

We illustrate all branches of the traveling wave solutions in (\ref{eq1_TV})
with the constants $V_0 = 0.0168, M = 0.913, u_c = 0.025, \beta = 89.7$, 
which are identical to those in Figure~\ref{fig_congestion}.
These constants are fixed throughout this section.
We used the numerical continuation package HomCont/AUTO \cite{doedel2007auto} 
for heteroclinic, homoclinic, and periodic orbits. 
The numerical approximations of the heteroclinic and homoclinic orbits were 
achieved by solving a truncated problem using the projection boundary conditions. 
Refer to \cite{doedel1990numerical}, \cite{friedman1991numerical}, and
\cite{beyn1990numerical} for the theoretical background.

%%%%%%%%%%%%%%%%%%%%
%%%%%%%%%%%%%%%%%%%%

\begin{figure*}[h]
 \begin{center}
  \includegraphics[width=14cm]{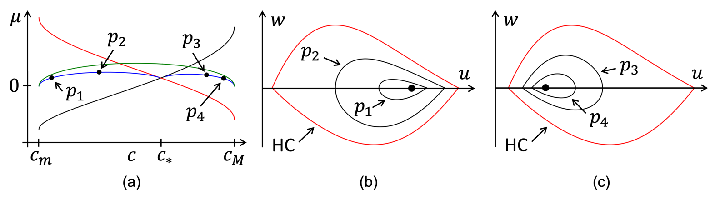}
 \end{center}
 \caption{Numerical continuations for heteroclinic and homoclinic orbits,
 and Hopf bifurcation points. 
 (a) Branches of traveling back (black), front (red) and pulse (blue) solutions.
 The green curve represents the graph of $\mu = - f' (u_0)$.
 (ii) of Lemma~\ref{lem:evus} shows
 that a Hopf bifurcation occurs in (\ref{eq1_TV}) at each point of the green curve.
 All the branches terminate at the values of $c = c_m$ or $c = c_M$ 
 where two of the equilibriums among the three collide. 
 (b), (c) Corresponding orbits for traveling pulse solutions 
 at $p_1, p_2, p_3, p_4$ on the blue branches in (a).
 They are homoclinic orbits in the $(u, w)$ phase plane, 
 which are represented by black curves.
 Red curves indicate the heteroclinic cycle (HC), 
 consisting of two heteroclinic orbits 
 at the intersection $(c_*, \mu_*)$ of the black and red curves in (a).
 The black disks in (b), (c) are $(u_m, 0)$ and $(u_M, 0)$
 corresponding to the endpoints $(c_m, 0)$ and $(c_M, 0)$ 
 of the blue and green curves in (a), respectively.
 }
 \label{c-mu}
\end{figure*}

%%%%%%%%%%%%%%%%%%%%
%%%%%%%%%%%%%%%%%%%%

We study the model proposed by Lee et al., 
characterized by the function $\k (\rho) = 1 / (6 \tau \rho^2)$.
Notably, the system described by (\ref{eq1_TV}) remains independent of $\tau$.
We fixed $K = 1.25$,
resulting in approximate values of $c_1 = c_m \approx 0.0151$ and $c_M \approx 0.0167$.
Therefore, Theorem~\ref{thm:HECO} stipulates 
the existence of traveling back and front solutions
when $c_m < c < c_M$ and $\mu = \mu_b, \mu_f$.
Furthermore, traveling pulse solutions emerge when $\mu = \mu^1_{pul}, \mu^2_{pul}$.

Figure~\ref{c-mu} illustrates the branches of heteroclinic 
and homoclinic orbits within the $(c, \mu)$-parameter space, 
obtained through numerical continuations. 
Each point $(c,\mu)$ along the branches (black, red, or blue) 
in the upper-left plot corresponds to specific parameter settings conducive 
to heteroclinic or homoclinic orbits.
These branches represent the loci of $\mu = \mu_b, \mu_f, \mu^1_{pul}, \mu^2_{pul}$.
Notably, the crossing of two branches of heteroclinic orbits occurs 
at the point $(c_*, \mu_*)$, indicating the presence of a heteroclinic cycle. 
Additionally, the two branches of the homoclinic orbits diverge from this point. 
These findings are consistent with 
Theorems~\ref{thm:HECO} and \ref{thm:HOCO}, and Proposition~\ref{prop:hobp}.

The bifurcation of the homoclinic orbit from the heteroclinic cycle was discussed
in \cite{kokubu1988homoclinic}.
While this previous study necessitated non-degeneracy hypotheses, 
we do not undertake such analytical investigations. 
However, numerical estimates of saddle quantity, computed as 
the sum of eigenvalues at saddle points 
$O_1 = (u_1, 0)$ and $O_2 = (u_2, 0)$ in (\ref{eq1_TV}), were performed.
We estimated $u_1 = 0.0166$ and $u_2 = 0.0344$,
along with the parameter values $(c_*, \mu_*) = (0.01611, 0.0734)$.
Eigenvalue calculations for $\l_\pm (u_i)$, 
as defined in Lemma~\ref{lem:evus}, yielded approximate values of 
$(\l_- (u_1), \l_+ (u_1)) = (-52.88, 117.82)$ 
and $(\l_- (u_2), \l_+ (u_2)) = (-27.90, 66.08)$. 
Consequently, positive saddle quantities were deduced at $(u_1, 0)$ and $(u_2, 0)$, 
signifying that the homoclinic orbit branches from $O_1$ to $O_1$ and 
from $O_2$ to $O_2$ are tangential to the heteroclinic orbit branches 
from $O_1$ to $O_2$ and from $O_2$ to $O_1$ at $(c^*,\mu^*)$, respectively.
This tangency is evident in Figure~\ref{c-mu} (a).

%%%%%%%%%%%%%%%%%%%%
%%%%%%%%%%%%%%%%%%%%

\begin{figure}[h]
 \begin{center}
  \includegraphics[width=8.5cm]{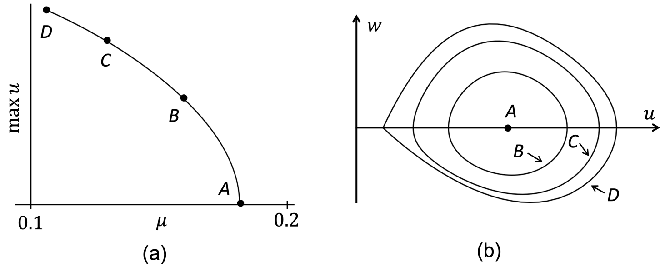} \\
  \includegraphics[width=8.5cm]{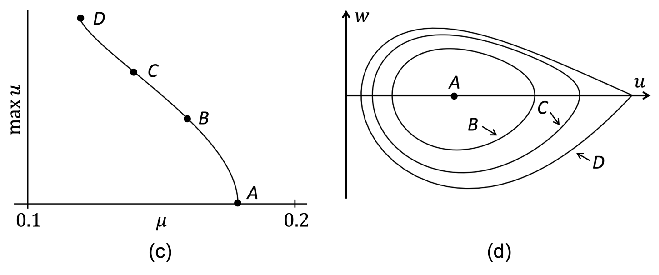}
 \end{center}
 \caption{Continuation of periodic orbits from the Hopf 
 to homoclinic bifurcation.
 (a), (c) Maximum of $u$ versus $\mu$. 
 (b), (d) Orbits in the $(u,w)$ phase plane.
 We set $c = 0.0163$ in the upper figures, 
 whereas $c = 0.0155$ in the lower ones.
 The equilibrium point $(u_0, 0)$ is denoted by $A$, while
 $B$ and $C$ represent periodic orbits, and $D$ corresponds to a homoclinic orbit.
 }
 \label{per00}
\end{figure}

%%%%%%%%%%%%%%%%%%%%
%%%%%%%%%%%%%%%%%%%%

The numerical continuation process relies on having approximate heteroclinic 
orbit as a starting point, which must be sufficiently accurate. 
While having an exact solution at a specific parameter value is advantageous, it is
not feasible for (\ref{eq1_TV}).
However, the Allen-Cahn-Nagumo equation
\begin{equation}\label{eq:ACN}
 \left\{
  \begin{aligned}
   & u_z = w, \\
   & w_z = - \mu w + u (u-a) (u-1)
  \end{aligned}
 \right.
\end{equation}
offer exact families of heteroclinic solutions 
\begin{equation}\label{exact_het}
 u (z) = \frac{1}{1 + e^{z/\sqrt{2}}},
  \quad \mu = \sqrt{2}\left( \frac{1}{2}-a \right)
\end{equation}
for $a \in (0, 1)$, and homoclinic solutions
\begin{equation}\label{exact_hom}
 u (z) = \frac{6a}{2(1+a) + \sqrt{2 (2-a)(1-2a)} \cosh \sqrt{a} z},
  \quad \mu = 0
\end{equation}
for $a \in (0, 1/2)$.
These exact solutions can serve as the seeds for homotopy continuation 
from (\ref{eq:ACN}) to (\ref{eq1_TV}) for $a \in (0, 1/2)$.
Specifically, we scaled the nonlinear terms $u (u-a) (u-1)$ in (\ref{eq:ACN}) linearly,
defining $f_{AC} (u; u_1, u_0, u_2) = (u-u_1) (u-u_0) (u-u_2)$, 
and performed homotopy continuation for
\begin{equation}\label{eq:homotopy}
 \left\{
  \begin{aligned}
   u_z & = w, \\
   w_z & = -\mu w+(1-\phi) f_{AC} (u; u_1, u_0, u_2)
   + \phi (g_1 (u) f (u) + g_2 (u) h (u,\mu) w),
  \end{aligned}
 \right.
\end{equation}
where $\phi \in [0,1]$ represents a homotopic parameter.
Note that $f_{AC} (u; u_1, u_0, u_2)$ and $f (u)$ have the same zeroes 
for $(K,c) \in \mathcal{D}_1$.
At $\phi = 0$, the exact solutions of the 
heteroclinic and homoclinic orbits are as described earlier.
Therefore, the continuation from $\phi = 0$ to $1$ yields
approximate solutions for (\ref{eq1_TV}).

%%%%%%%%%%%%%%%%%%%%
%%%%%%%%%%%%%%%%%%%%

\begin{figure}[h]
 \begin{center}
  \includegraphics[width=8.5cm]{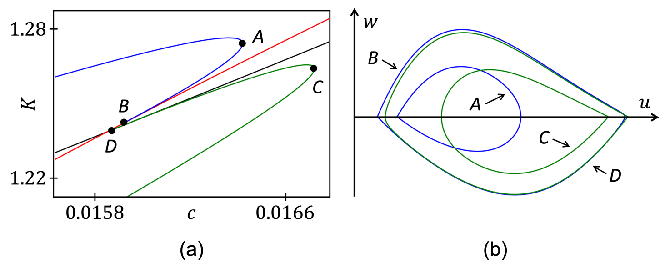}
 \end{center}
 \caption{(a) Continuation branches for the traveling back (black),
 traveling front (red), and traveling pulse (green, blue) solutions 
 in the $(K, c)$-parameter space.
 (b) Orbits in the $(u,w)$ phase plane.
 The curves $A, B$ (blue) and $C, D$ (green) correspond to
 the points along the branches of traveling pulse solutions in (a).}
 \label{c-K}
\end{figure}

%%%%%%%%%%%%%%%%%%%%
%%%%%%%%%%%%%%%%%%%%

We delve into the bifurcation of periodic orbits, as outlined
in Proposition~\ref{prop:pobp}, using the same parameters depicted in Figure~\ref{c-mu}.
$\mu = - f' (u_0)$ serves as a critical value for each $c \in (c_m, c_M)$,
where the middle equilibrium $(u_0, 0)$ possesses purely imaginary eigenvalues,
as denoted by the green curve in Figure~\ref{c-mu}.
We can numerically trace the continuation of periodic orbits from the Hopf
bifurcation point by considering $\mu$ as a control parameter while 
$c$ remains fixed in $(c_m, c_M)$. 
The branches of these periodic orbits seem to culminate in the homoclinic orbit,
represented by the blue curves in Figure~\ref{c-mu} (a).
Figure~\ref{per00} showcases two typical situations: 
$(K, c) \in \mathcal{D}_{1,1}$ ((a), (b))
and $(K, c) \in \mathcal{D}_{1,2}$ ((c), (d)).
We infer that periodic orbits exist within the parameter region 
delineated by the three curves in Figure~\ref{c-mu} (a): 
the two blue curves (branches of homoclinic orbits) 
and the green curve (Hopf bifurcation points).

Furthermore, we delineated the branches of heteroclinic and homoclinic orbits 
in the $(K, c)$-parameter space illustrated in Figure~\ref{c-K}, 
where we set $\mu = 2 \tau K - 1$.
As depicted in the figure, bifurcation branches 
for heteroclinic and homoclinic orbits correspond to the boundary conditions 
(\ref{eq:bcmi}), (\ref{eq:bcmd}), and (\ref{eq:bctmp0}), considering a given $u_1, u_2$.
Obtaining a figure akin to Figure~\ref{c-K} rigorously is challenging 
because $K$ and $c$ feature in various locations in (\ref{eq1_TV}). 
Hence, regarding $\mu$ as an independent parameter is apt. 
We could obtain qualitatively similar results for the K\"uhne model 
as shown in Figure~\ref{c-K} for Lee et al.'s model. 
However, we have not reported these findings.

We discuss the relationship between the traveling wave solutions for (\ref{eq1_TV}) 
and those of the original problem (\ref{eq_cOV}).
We numerically compute $K$ and $c$ using the data used from 
Figure~\ref{fig_congestion} and confirm that the preceding discussion implies 
the existence of a traveling wave solution in (\ref{eq_cOV}).
First, we estimate the time required for the pulse to traverse the region as $T = 178$.
After that, $c = L/T$ is approximated to be $0.0130899$.
Subsequently, we determine $(\rho_-, v_-) = (26.516, 0.029025)$
using the data for $\rho (x, t)$ at $(x, t) = (0, 1000)$.
Our observations indicate that 
$x = 0$ lies outside the congestion phase at $t = 1000$.
We use (\ref{eq1_preserve_K}) to find $K = \rho_- (v_- + c) \approx 1.11672$. 
Using the estimated values of $(K, c)$, and employing AUTO like in the case 
from which we obtained the results depicted in Figure~\ref{per00}, 
we derive a periodic traveling wave solution with a period of $2.33022$,
which closely aligns with $L = 2.33$.
Moreover, we obtain $c = 0.013089412$ and $\mu_{per} = 0.11640802175$.
We note that $(K,c) \in \mathcal{D}_1$,
$\mu_{per}$ is approximately $2 \tau K - 1 \approx 0.11672$, and $(c, \mu_{per})$ 
is included in the parameter region bounded by the three curves 
related to the branches of homoclinic and Hopf bifurcation points, 
as shown in Figure~\ref{c-mu} (a).
Hence, we conclude that the solution illustrated in Figure~\ref{fig_congestion}
coincides with the periodic orbit verified numerically by AUTO.

%%%%%%%%%%%%%%%%%%%%
%%%%%%%%%%%%%%%%%%%%

\section{Discussions}
\label{Discussions}

This study rigorously established the existence 
of various traveling wave solutions in the macroscopic traffic model~(\ref{eq_cOV}). 
The emergence of congested states as time-periodic solutions in microscopic models 
is highlighted via Hopf bifurcation, which is a promising method 
for obtaining such solutions.
However, obtaining a solution away from the bifurcation point 
is often infeasible.
Alternatively, \cite{sugiyama1997simple} employed
the step function as an OV function and successfully formally constructed
a solution corresponding to the congestion phase.
Consequently, strong restrictions are usually necessary 
to rigorously obtain the congestion phase in microscopic models.
Therefore, continuous models are useful in treating traveling wave solutions 
in a congestion phase.

In presenting Theorems~\ref{thm:HOCO} and \ref{thm:ptws}, 
condition (\ref{cd:nu}) was considered.
If the viscosity coefficient $\k (\rho)$ exhibits a strong singularity at $\rho = 0$, 
as in Lee et al.'s model, all theorems in this study remain valid.
However, (\ref{cd:nu}) does not hold in the case where $\k (\rho)$ is constant, 
as in the K\"uhne model, or has a weak singularity, as in the Kerner 
and Konh{\"a}user model.
Actually, constructing a heteroclinic orbit is feasible
in (\ref{eq1_TV}) protruding outside the region $\{ u>0 \}$ even if $\k (\rho) = \k_0$.
However, this solution is meaningless in (\ref{eq_cOV}) 
because $\rho = 1 / u$ should be positive.
Further analyses are necessary to obtain analogous results 
to Proposition~\ref{prop:BMBF} without (\ref{cd:nu}).

%%%%%%%%%%%%%%%%%%%%
%%%%%%%%%%%%%%%%%%%%

\bibliographystyle{elsarticle-num} 
\bibliography{reference}

%%%%%%%%%%%%%%%%%%%%
%%%%%%%%%%%%%%%%%%%%

\end{document}